\documentclass[11pt]{article}
\usepackage[margin=1in]{geometry}
\usepackage[T1]{fontenc}
\usepackage{lmodern}
\usepackage{amsmath,amssymb,amsthm,mathtools}
\usepackage{microtype}
\usepackage{tikz}
\usetikzlibrary{arrows.meta}
\usepackage{enumitem}
\usepackage[colorlinks=true,linkcolor=blue,citecolor=blue,urlcolor=blue]{hyperref}
\hypersetup{
 pdftitle={Global forward self-similar solutions of the spatially inhomogeneous Boltzmann equation for cutoff Maxwell molecules},
 pdfauthor={Quoc-Hung Nguyen; Jiaqi Yang; Tong Yang},
 pdfsubject={Singular homogeneous initial data and forward self-similar mild solutions},
 pdfkeywords={Boltzmann equation, Maxwell molecules, self-similar solutions, singular initial data, Kaniel-Shinbrot iteration}}
\newtheorem{theorem}{Theorem}[section]
\newtheorem{proposition}[theorem]{Proposition}
\newtheorem{lemma}[theorem]{Lemma}
\newtheorem{corollary}[theorem]{Corollary}
\newtheorem{remark}[theorem]{Remark}
\theoremstyle{definition}
\newtheorem{definition}[theorem]{Definition}
\newcommand{\R}{\mathbb R}
\newcommand{\Sph}{\mathbb S}
\newcommand{\Qp}{Q^+}
\newcommand{\dd}{\,\mathrm d}
\newcommand{\eps}{\varepsilon}
\newcommand{\bracket}[1]{\left\langle #1\right\rangle}

\newcommand{\esssup}{\operatorname*{ess\,sup}}
\title{Global forward self-similar solutions of the\\
spatially inhomogeneous Boltzmann equation\\
for cutoff Maxwell molecules}
\author{Quoc-Hung Nguyen \and
Jiaqi Yang \and
Tong Yang}
\date{}
\begin{document}
\maketitle
\begin{abstract}
For every fixed $m>1$, we construct nonzero global forward self-similar
mild solutions of the three-dimensional spatially inhomogeneous Boltzmann
equation for Maxwell molecules. We assume that the angular kernel is
bounded and satisfies a weighted integrability condition at the grazing
and head-on angles. The initial data are homogeneous under the kinetic
scaling, singular at the origin in phase space, and have infinite mass.
Their size is small in a weighted norm invariant under this scaling.
We first prove a bilinear estimate for the gain operator in this norm,
using free-transport coordinates and the orthogonality of the collision
displacements. This estimate gives a global solution of the equation
without the loss term, which bounds the lower and upper approximations
in the Kaniel--Shinbrot iteration. We prove that their difference tends
to zero to arbitrarily high order at the initial time on a fixed annulus
in phase space. The argument uses a finite number of iterations of the
collision estimate and stops characteristics when they leave a spatial
annulus. Self-similarity then gives finite mass and a finite first
absolute velocity moment for the difference. Comparing its mass balance
with its scaling law shows that the two limits agree. The resulting
solution attains its initial trace locally in $L^\infty$ away from the
origin in phase space and has infinite mass at every positive time.
\end{abstract}
\noindent\textbf{Keywords.} Boltzmann equation; Maxwell molecules;
forward self-similar solutions; singular initial data; Kaniel--Shinbrot iteration.\par
\medskip
\noindent\textbf{2020 Mathematics Subject Classification.}
35Q20, 35A01, 76P05, 82C40.
\section{Introduction}
We study global forward self-similar solutions of the spatially inhomogeneous
Boltzmann equation. For a fixed $m>1$, the initial data are singular at
$(x,v)=(0,0)$ and satisfy
\[
 f_0(r^m x,rv)=r^{-(m+2)}f_0(x,v),\qquad r>0.
\]
Let $f=f(t,x,v)\ge0$ denote the particle density. In three dimensions, the
equation takes the form
\begin{equation}
 \partial_t f+v\cdot\nabla_x f=Q(f,f),
 \qquad (t,x,v)\in(0,\infty)\times\R^3\times\R^3.
 \label{eq:Boltzmann-intro}
\end{equation}
For Maxwell molecules, the collision operator in the sigma representation is
\begin{align}
 Q(F,G)(v)&=\Qp(F,G)(v)-\nu_G F(v),
 \label{eq:Q-decomposition-intro}\\
 \Qp(F,G)(v)&=\int_{\R^3}\int_{\Sph^2}
 \mathbf{b}(\widehat{v-v_*}\cdot\sigma)F(v')G(v_*')\,\dd\sigma\dd v_*,
 \label{eq:gain-intro}
\end{align}
where
\begin{equation}
 v'=\frac{v+v_*}{2}+\frac{|v-v_*|}{2}\sigma,
 \qquad
 v_*'=\frac{v+v_*}{2}-\frac{|v-v_*|}{2}\sigma.
 \label{eq:collision-rule-intro}
\end{equation}
Here $\widehat z=z/|z|$ for $z\ne0$; the value of the integrand on
$v=v_*$ is immaterial. Throughout the paper we assume
\begin{equation}
 \begin{gathered}
 0\le \mathbf{b}\in L^\infty([-1,1]),\\
 \Lambda_{\mathbf{b}}:=\int_{-1}^1\mathbf{b}(z)
 \bigl[(1-z)^{-3/2}+(1+z)^{-3/2}\bigr]\,\dd z<\infty.
 \end{gathered}
 \label{eq:strong-cutoff-intro}
\end{equation}
This condition allows the angular support to reach both endpoints.
For example, it holds if
$0\le\mathbf{b}(z)\le C(1-z^2)^\alpha$ with $\alpha>1/2$.
Constants depending on $\mathbf{b}$ depend only on
$\|\mathbf{b}\|_{L^\infty}$ and $\Lambda_{\mathbf{b}}$. With
\[
 \beta_{\mathbf{b}}:=2\pi\int_{-1}^1\mathbf{b}(z)\,\dd z,
 \qquad \rho_G(t,x):=\int_{\R^3}G(t,x,v)\,\dd v,
\]
the collision frequency is independent of velocity:
\begin{equation}
 \nu_G(t,x)=\beta_{\mathbf{b}}\rho_G(t,x).
 \label{eq:maxwell-frequency-intro}
\end{equation}
The assumption on $\Lambda_{\mathbf{b}}$ ensures that the constants in
Lemma~\ref{lem:angular-integration} are finite. We use these angular identities
to estimate velocity integrals after localizing in space. We also use
\eqref{eq:maxwell-frequency-intro} when integrating the equation for the
difference between the lower and upper limits.

Global solutions of the cutoff Boltzmann equation have been studied
under several assumptions on the initial data. For background on the
Boltzmann equation, see \cite{CercignaniIllnerPulvirenti}. For finite-mass
data with suitable entropy and moment bounds,
DiPerna--Lions \cite{DiPernaLions} established global renormalized
solutions. Spectral and energy methods give global solutions near a
Maxwellian in periodic domains and in the whole space
\cite{Ukai1974,GuoCutoff,LiuYangYu,Duan2008}. Related theories in spatial
Besov spaces are developed in \cite{DuanLiuXu,DuanSakamoto}.
Large-amplitude perturbations can also be treated under integral
smallness conditions \cite{DuanHuangWangYang}; bounded-domain results
with diffuse or specular reflection are obtained in
\cite{DuanWang,DuanKoLee}. For cutoff Maxwell molecules,
Duan--Liu--Yang \cite{DuanLiuYangCouette} constructed stationary plane
Couette flows at small shear rates and proved stability within the
planar class. For the sheared cutoff Maxwell equation on the
three-dimensional torus, Duan--Liu--Shen \cite{DuanLiuShenShear} proved
global stability of homogeneous self-similar profiles in the rescaled
variables under small spatially periodic perturbations with matching
mass and momentum, for sufficiently small shear rates.

For data close to vacuum, the construction of Illner--Shinbrot
\cite{IllnerShinbrot} uses smallness and decay along free transport,
and the monotone iteration of Kaniel--Shinbrot \cite{KanielShinbrot}
preserves nonnegativity. More directly related to our method,
Chen--Denlinger--Pavlovi\'c \cite{ChenDenlingerPavlovic} proved a
critical small-data theory for the two-dimensional constant-kernel
equation, with additional regularity and localization assumptions.
In their Section~2.2, a solution of the equation without the loss term
provides the upper bound used to start Kaniel--Shinbrot iteration.
Chen--Shen--Zhang \cite{ChenShenZhang}
used this framework for the three-dimensional cutoff equation with
kinetic factor $|v-v_*|^\gamma$ for $-\tfrac12\leq\gamma\leq0$.
Their solutions belong to weighted
$L^2_vH^s_x$ spaces with $s>1$, and the smallness assumption is imposed in
a lower scaling-critical norm. We also first solve the equation without
the loss term and then apply Kaniel--Shinbrot iteration. For the singular
homogeneous data considered here, we need different estimates to construct
the approximations and prove that their limits agree.
The equation without the loss term can blow up outside a small-data regime
\cite{AndreassonCalogeroIllner}.

For long-range collision models, global stability of vacuum has been
proved for small initial data with sufficient regularity, spatial
localization, and velocity decay. Luk \cite{LukVacuum} treated the
spatially inhomogeneous Landau equation on $\R^3$ with moderately soft
potentials, $-2<\gamma<0$. Chaturvedi \cite{ChaturvediBoltzmannVacuum}
proved a corresponding result for the non-cutoff Boltzmann equation
with $0<s<1$ and $0<\gamma+2s<2$, where $s$ is the angular singularity
parameter. Chaturvedi \cite{ChaturvediLandauHardVacuum} also established
stability of vacuum for the Landau equation with Maxwellian and hard
potentials, $0\leq\gamma<1$. These arguments use dispersion from free
transport together with weighted estimates for the collision operator.
For the Coulomb Landau equation, Chaturvedi--Luk
\cite{ChaturvediLukLinear} proved linear stability of traveling
Maxwellians. For very soft potentials, including the Coulomb Landau equation, the
corresponding global nonlinear stability problem near vacuum remains,
to our knowledge, an important open problem.

Local well-posedness and ill-posedness in Sobolev spaces have also been
studied. For three-dimensional cutoff kernels with kinetic factor
$|v-v_*|^\gamma$, $-1\leq\gamma\leq0$, Chen--Shen--Zhang
\cite{ChenShenZhangSoft} proved local well-posedness in $L^2_vH^s_x$
with suitable polynomial velocity weights for $s>1$, and failure of
uniform continuity of the solution map in the corresponding weighted
spaces for $0\leq s<1$. For the hard-sphere equation,
Chen--Guo--Shen--Zhang \cite{ChenGuoShenZhangHardSphere} proved failure
of uniform continuity for $0\leq s<1$, even when the initial data are
close in a Gaussian-weighted $L^2_vH^s_x$ norm and the solutions are
compared in the unweighted $L^2_vH^s_x$ norm.

For the spatially homogeneous Maxwell equation, Cannone--Karch
\cite{CannoneKarch} studied self-similar solutions and infinite-energy
asymptotics. Self-similar profiles for homoenergetic solutions of the form
$f(t,x,v)=g(t,v-L(t)x)$ were constructed in
\cite{JamesNotaVelazquez,Kepka}. In these results, the ansatz fixes the spatial dependence of the solution.
Here we allow the initial trace to depend separately on position and
velocity through a bounded profile with compact velocity support.

Singular homogeneous data also arise in self-similar constructions for
fluid equations. Jia--\v Sver\'ak \cite{JiaSverak} treated
$(-1)$-homogeneous Navier--Stokes data without an amplitude smallness
assumption. For Muskat, small corners and cones were studied in
\cite{GarciaGomezNguyenPausader,NaMuskat}, with moving corners treated in
\cite{GarciaGomezHaziotPausader}. Surface-tension problems with a different
similarity scale were studied for Hele--Shaw in \cite{AgrawalPatel} and
for Muskat in \cite{WanYang}. In these constructions, estimates near the
initial time help identify the prescribed trace through the behavior of
the profile at infinity.

Our initial data depend on both position and velocity. They are unbounded
near the origin in phase space and have infinite mass. We therefore
measure their size in a weighted norm that is invariant under the kinetic
scaling. These data do not belong to the bounded perturbation classes or
the global $L^2$ classes described above, as the calculation below shows. The works \cite{BedrossianGualdaniSnelson} and
\cite{HendersonSnelsonTarfulea} concern, respectively, backward
singularity scenarios and rough-data non-cutoff solutions. Throughout this paper
we assume the angular integrability condition \eqref{eq:strong-cutoff-intro}.

Fix $m>1$. We first recall the two-parameter dilation symmetry of the
Maxwell-molecule equation. For $\lambda,\mu>0$, define
\[
 f_{\lambda,\mu}(t,x,v)
 :=\lambda\mu^3 f(\lambda t,\lambda\mu x,\mu v).
\]
Writing
$(T,X,V)=(\lambda t,\lambda\mu x,\mu v)$, direct differentiation and the
change of variables $V_*=\mu v_*$ in the collision integral give
\begin{align*}
 (\partial_t+v\cdot\nabla_x)f_{\lambda,\mu}(t,x,v)
 &=\lambda^2\mu^3
 (\partial_T+V\cdot\nabla_X)f(T,X,V),\\
 Q(f_{\lambda,\mu},f_{\lambda,\mu})(t,x,v)
 &=\lambda^2\mu^3Q(f,f)(T,X,V).
\end{align*}
The two factors of $f$ contribute $\lambda^2\mu^6$, while
$\dd v_*=\mu^{-3}\dd V_*$. The angular kernel is unchanged because
$\widehat{\mu v-\mu v_*}=\widehat{v-v_*}$. Thus
$f_{\lambda,\mu}$ is a solution whenever $f$ is a solution.

We now choose the one-parameter subgroup
$\lambda=r^{m-1}$ and $\mu=r$. It gives
\begin{equation}
 f(t,x,v)\longmapsto
 r^{m+2}f(r^{m-1}t,r^mx,rv),\qquad r>0.
 \label{eq:kinetic-scaling-intro}
\end{equation}
This choice matches the initial trace below, since
\[
 f_0(r^mx,rv)=r^{-(m+2)}f_0(x,v).
\]
Consequently, the transformation in
\eqref{eq:kinetic-scaling-intro} leaves $f_0$ unchanged. If a solution is
invariant under this transformation, choosing
$r=t^{-1/(m-1)}$ and setting $F(X,V)=f(1,X,V)$ gives
\[
 f(t,x,v)=t^{-\frac{m+2}{m-1}}
 F\left(\frac{x}{t^{m/(m-1)}},
         \frac{v}{t^{1/(m-1)}}\right).
\]
The condition $m>1$ makes an angular integral in the gain estimate finite.
This estimate allows us to solve the equation without the loss term and
start the Kaniel--Shinbrot iteration. The resulting lower and upper limits
satisfy $f^{\mathrm{lo}}\le f^{\mathrm{up}}$.
The main difficulty is to prove that they agree. We estimate their
difference $w=f^{\mathrm{up}}-f^{\mathrm{lo}}$ as $t\to0$ on a fixed annulus
in phase space. Self-similarity then shows that $w$ has finite mass.
Comparing the mass balance with the scaling law gives $w=0$, and hence
$f^{\mathrm{lo}}=f^{\mathrm{up}}$. We explain these steps below.
\subsection{Main result}
We now state the main theorem and specify the admissible initial traces.
Fix $R_0<\infty$ and a nonzero, nonnegative, bounded measurable profile
$\phi$ on $\Sph^2\times\R^3$. We choose a representative that vanishes
when $|z|>R_0$. For $\eps>0$, we prescribe the initial trace
\begin{equation}
 f_0(x,v)=\eps|x|^{-(m+2)/m}
 \phi\left(\widehat x,\frac{v}{|x|^{1/m}}\right),\qquad x\ne0,
 \label{eq:trace-intro}
\end{equation}
where
\begin{equation}
 0\le\phi\in L^\infty(\Sph^2\times\R^3),\qquad
 \phi\not\equiv0,\qquad
 \phi(\omega,z)=0\quad\text{when $|z|>R_0$}.
 \label{eq:phi-intro}
\end{equation}
We set $f_0(0,v)=0$. The velocity-support condition on $\phi$ gives
$|v|\lesssim |x|^{1/m}$ on the support of $f_0$. In particular, $f_0$ is
measurable and locally bounded away from the kinetic origin
$(x,v)=(0,0)$. Its velocity density
is a nonnegative angular coefficient times $|x|^{-(m-1)/m}$, so the total
mass is infinite. The parameter $\eps$ measures the size of the trace
without removing its singularity.

More precisely, let $R>1$ and
$1\le p<3(m+1)/(m+2)$. Under the substitutions
$x=r\omega$ and $v=r^{1/m}z$, the restrictions $|x|<R$ and
$|v|<R^{1/m}$ become $0<r<R/\max\{1,|z|\}^m$.
Tonelli's theorem therefore gives
\[
 \int_{\R^6}\mathbf{1}_{\{|x|<R,\,|v|<R^{1/m}\}}
 |f_0(x,v)|^p\,\dd x\dd v
 =\frac{mC_{\phi,p}\eps^p}{3m+3-p(m+2)}
 R^{\frac{3m+3-p(m+2)}m}<\infty,
\]
where
\[
 C_{\phi,p}:=\int_{\Sph^2}\int_{\R^3}|\phi(\omega,z)|^p
 \max\{1,|z|\}^{p(m+2)-3m-3}\,\dd z\dd\omega\in(0,\infty).
\]
For $3(m+1)/(m+2)\le p<\infty$, the radial integral diverges at zero,
logarithmically at the endpoint. Since $\phi\not\equiv0$, local
integrability fails at the kinetic origin in this range.
Thus $f_0\in L^p_{\mathrm{loc}}(\R^6)$ precisely for
$1\le p<3(m+1)/(m+2)$.
For these $p$, the displayed integral diverges as $R\to\infty$; for all
other $p\ge1$, local integrability fails. Hence
$f_0\notin L^p(\R^6)$ for any $1\le p<\infty$, although
$f_0\in L^2_{\mathrm{loc}}(\R^6)$. Thus the weighted $L^2$ theory in
\cite{ChenShenZhang} does not apply.

To describe the solution along free transport, we introduce
\begin{equation}
 h^\#(t,y,v):=h(t,y+tv,v),\qquad
 \varrho_m(y,v):=(|y|^2+|v|^{2m})^{1/(2m)}.
 \label{eq:anisotropic-radius-intro}
\end{equation}
Since $\varrho_m(r^my,rv)=r\varrho_m(y,v)$, we use the weight
$\varrho_m^{m+2}$ and the norm
\[
 \|h\|_{Y_m}:=\esssup_{t>0,y,v}
 \varrho_m(y,v)^{m+2}|h^\#(t,y,v)|.
\]
This norm is invariant under \eqref{eq:kinetic-scaling-intro}.
We write $\bracket z=(1+|z|^2)^{1/2}$.
We formulate the equation through the characteristic integral identity.
For $0\le s\le t$, write
\[
 A_h(s,t;y,v):=\int_s^t\nu_h(\tau,y+\tau v)\,\dd\tau.
\]
Here $\nu_h$ is the collision frequency defined in
\eqref{eq:maxwell-frequency-intro}.
\begin{definition}[Mild solution]
A nonnegative measurable function $f$ is a mild solution of
\eqref{eq:Boltzmann-intro} with initial trace $f_0$ if its velocity density
is finite almost everywhere, if for almost every $(y,v)$ and every
$0<T<\infty$,
\[
 A_f(0,T;y,v)<\infty,
 \qquad
 \int_0^T\Qp(f,f)(s,y+sv,v)\,\dd s<\infty,
\]
and if
\begin{equation}
 f^\#(t,y,v)=e^{-A_f(0,t;y,v)}f_0(y,v)
 +\int_0^t e^{-A_f(s,t;y,v)}\Qp(f,f)(s,y+sv,v)\,\dd s
 \label{eq:mild-definition}
\end{equation}
for almost every $(t,y,v)$. These integrability conditions give an
absolutely continuous representative along almost every characteristic,
with $f^\#(t,y,v)\to f_0(y,v)$ as $t\downarrow0$.
\end{definition}
The main result is the following.
\begin{theorem}[Small forward self-similar solutions]
\label{thm:main}
Let $m>1$, and assume \eqref{eq:strong-cutoff-intro} and
\eqref{eq:phi-intro}. There exists
$\eps_0=\eps_0(m,\mathbf{b},\phi)>0$ such that, for every
$0<\eps\le\eps_0$, equation~\eqref{eq:Boltzmann-intro} has a
nonnegative, nonzero, global mild solution with trace \eqref{eq:trace-intro}.
It has a characteristic representative with the following properties.
\begin{enumerate}[label=\textup{(\roman*)}]
\item For every $r>0$, it is invariant under the kinetic dilation:
\begin{equation}
 f(t,x,v)=r^{m+2}f(r^{m-1}t,r^mx,rv).
 \label{eq:main-selfsimilarity}
\end{equation}
\item For a constant $C$ depending only on $m,\mathbf{b},\phi$,
\begin{align}
 0\le f^\#(t,y,v)&\le C\eps\varrho_m(y,v)^{-(m+2)},
 \label{eq:main-envelope}\\
 \rho_f(t,x)&\le C\eps t^{-1}
 \bracket{\frac{x}{t^{m/(m-1)}}}^{-(m-1)/m}.
 \label{eq:main-density}
\end{align}
\item The initial trace is attained locally in $L^\infty$ away from the origin:
\begin{equation}
 f^\#(t,\cdot,\cdot)\longrightarrow f_0
 \quad\text{in }L^\infty_{\mathrm{loc}}(\R^6\setminus\{(0,0)\})
 \quad(t\downarrow0).
 \label{eq:main-trace}
\end{equation}
\item For every $t>0$, one has $\int_{\R^6}f(t,x,v)\,\dd x\dd v=\infty$.
\end{enumerate}
The characteristic identities and bounds are understood away from the
kinetic-origin characteristic; its values may be set to zero.
\end{theorem}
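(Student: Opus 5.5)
The construction follows the route outlined in the abstract: a gain-only supersolution, Kaniel--Shinbrot monotone iteration, and a uniqueness-of-limits argument driven by self-similarity.

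\emph{Step 1: bilinear gain estimate.} The first step is the bilinear estimate in the scaling-invariant norm. I aim to show
\[
\Bigl\|\int_0^t \Qp(F,G)(s,y+sv,v)\,\dd s\Bigr\|_{Y_m}\le C_{m,\mathbf b}\|F\|_{Y_m}\|G\|_{Y_m}.
\]
To prove it, I write $F(s,x,v')=F^\#(s,x-sv',v')$ and bound $F^\#\le\|F\|\varrho_m^{-(m+2)}$. The free-transport positions $y+s(v-v')$ and $y+s(v-v'_*)$ involve the collision displacements $v-v'$ and $v-v'_*$, which are orthogonal. I then integrate over $v_*$ and $\sigma$ using the angular identities of Lemma~\ref{lem:angular-integration}, which is where $\Lambda_{\mathbf b}<\infty$ enters, and over $s$. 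The condition $m>1$ makes the resulting angular and time integral finite.

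\emph{Step 2: gain-only equation and iteration.} With the bilinear estimate, a Picard (or monotone) iteration solves $g^\#=f_0+\int_0^t\Qp(g,g)\,\dd s$ for small $\eps$, with $\|g\|_{Y_m}\le 2\|f_0\|_{Y_m}\lesssim\eps$. Note that $\|f_0\|_{Y_m}<\infty$ because $|v|\lesssim|x|^{1/m}$ on the support. Since $f_0$ is kinetically homogeneous, uniqueness in the small ball shows that $g$ is self-similar. I then run the Kaniel--Shinbrot scheme with $l_0=0$ and $u_0=g$. The beginning condition $0\le l_1\le l_0... $ is arranged in the standard order, and monotonicity gives $l_n\uparrow f^{\mathrm{lo}}\le f^{\mathrm{up}}\downarrow u_n$, both bounded by $g$. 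Each iterate commutes with the dilation \eqref{eq:kinetic-scaling-intro}, so both limits are self-similar. Passing to the limit by monotone convergence, they satisfy the coupled system $\partial f^{\mathrm{lo}}=\Qp(f^{\mathrm{lo}},f^{\mathrm{lo}})-\nu_{f^{\mathrm{up}}}f^{\mathrm{lo}}$ and the symmetric one for $f^{\mathrm{up}}$. The envelope bound \eqref{eq:main-envelope} is inherited from $g$. The density bound \eqref{eq:main-density} follows by integrating $\varrho_m(x-tv,v)^{-(m+2)}$ in $v$ and using scaling.

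\emph{Step 3: showing the limits agree (main obstacle).} The main obstacle is proving $w=f^{\mathrm{up}}-f^{\mathrm{lo}}=0$. Subtracting the two equations, $w^\#$ satisfies a linear inequality: $w$ is bounded by integrals of $\Qp(w,g)+\Qp(g,w)$ and of $\nu_w g$, with zero initial data. On a fixed phase-space annulus $K$ away from the origin, $g$ is bounded. I would iterate this inequality $N$ times, each time gaining a factor $t^{\delta}$ from the time integral. Characteristics are stopped when they leave a spatial annulus, so that only bounded regions of $g$ are used, and the exits are controlled by the $Y_m$ envelope. The target is $\sup_K w^\#(t)\le C_N t^N$ for every $N$. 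This step is where I expect the real difficulty: the collision integrals nonlocally couple $K$ to regions near the singular origin, and I would handle that coupling with the bilinear estimate in the $Y_m$ norm.

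\emph{Step 4: mass balance and the remaining properties.} By self-similarity, $w(t,x,v)=t^{-(m+2)/(m-1)}W(x/t^{m/(m-1)},v/t^{1/(m-1)})$. Therefore high-order vanishing at $t\to0$ on $K$ translates into rapid decay of $W$ at infinity in phase space. This gives $\int W<\infty$ and $\int|v|W<\infty$. Integrating the $w$-equation over $(x,v)$, the transport term drops out thanks to the first moment, and by \eqref{eq:maxwell-frequency-intro} the collision terms combine to $\frac{d}{dt}\int w=\beta_{\mathbf b}\int(\rho_{f^{\mathrm{up}}}\rho_{f^{\mathrm{lo}}}-\rho_{f^{\mathrm{lo}}}\rho_{f^{\mathrm{up}}})$-type terms that cancel, so $\int w$ is constant. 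On the other hand, scaling gives $\int w(t)=t^{(3m+3-m-2-... )}$, namely $t^{(4m+... )}\cdot$const, which is a nonzero power of $t$ (the exponent is $(2m+1)/(m-1)>0$ after computation). Hence $\int w=0$, and so $w=0$. Finally, the trace \eqref{eq:main-trace} follows from the mild formula: on compacts away from the origin, $A_f\to0$ and the gain integral is $O(t^\delta)$. Infinite mass follows from self-similarity and the lower bound $f\ge e^{-A}f_0$ along characteristics, which reproduces the $|x|^{-(m-1)/m}$ tail of the density.
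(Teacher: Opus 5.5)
Your Steps 1--2 are essentially sound. The angular-identity route to the gain estimate can be made to work once you split the product of the two weights using orthogonality, $|y+sa|^2+|y+sc|^2\ge|y|^2$ and $|v-a|^2+|v-c|^2=|v|^2+|v-q|^2$. The paper's Carleman route needs only $\mathbf b\in L^\infty$.

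There are, however, two genuine gaps. The first is in Step 4: the collision terms do \emph{not} cancel. Integrating the two limit equations in $v$ with $\int_{\R^3}\Qp(F,G)\,\dd v=\beta_{\mathbf b}\rho_F\rho_G$ gives the sources $\beta_{\mathbf b}(\rho_{f^{\mathrm{lo}}}^2-\rho_{f^{\mathrm{up}}}\rho_{f^{\mathrm{lo}}})$ and $\beta_{\mathbf b}(\rho_{f^{\mathrm{up}}}^2-\rho_{f^{\mathrm{lo}}}\rho_{f^{\mathrm{up}}})$. Only the cross terms cancel in the difference, so
\[
 M'(t)=\beta_{\mathbf b}\int_{\R^3}\bigl(\rho_{f^{\mathrm{up}}}+\rho_{f^{\mathrm{lo}}}\bigr)\rho_w\,\dd x\ge0 .
\]
The bracket system does not conserve the mass of $w$. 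Scaling also predicts growth, $M(t)=t^{(2m+1)/(m-1)}M(1)$, so there is no contradiction without a quantitative rate. The paper uses $\rho_U\le A_{\rho,m}\eps/t$ to get $M'\le 2\beta_{\mathbf b}A_{\rho,m}\eps\,M/t$. It then needs a \emph{second} smallness condition, $2\beta_{\mathbf b}A_{\rho,m}\eps<(2m+1)/(m-1)$. That your argument needs no smallness at this point is a symptom of the error.

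The main gap is Step 3: nothing in it produces $t^N$. The annulus $\mathcal A_m$ is in the variables $(y,v)$. It contains points with $y\approx0$, $|v|\sim1$, whose characteristics $y+sv$ stay near the spatial origin for small $s$. There the local bounds $\sup_v\bracket{v}^{m+2}U\lesssim\eps$ and $\rho_U\lesssim\eps$, which hold on spatial annuli, are lost. So stopping at the exit from a spatial annulus says nothing about these points.

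Nor does the $Y_m$ envelope control the exits. The nested-annulus iteration has to run in a weighted norm $\bracket{v}^kH$ with $k>3$, to control $\rho_H$. At an exit velocity $|v|\ge d/t$ the envelope gives only $\bracket{v}^kU\lesssim\eps t^{m+2-k}$. This is no gain at all for $k=m+2$, and in any case falls short of the required $N_m>2(m+1)/(m-1)$ when $m$ is near $1$.

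The bilinear estimate cannot supply powers of $t$ either. It is scale-invariant, and $\|w\|_{Y_m}$ does not depend on time for self-similar $w$. The term $\nu_wU$ is also not controlled by it, since the bound $\rho_U\lesssim\eps/s$ is not integrable in time.

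The missing idea is the inner-region estimate, which has three ingredients:
\begin{itemize}
\item For $|y|$ small and $|v|\ge a_0$, the conic support $|v|^m\le R_0^m|x|$ makes every free term vanish along all chains of backward characteristics with $|p_{j+1}|\ge|p_j|/\sqrt2$.
\item Energy conservation gives $\Qp(U,U)(x,p)\le C_{\mathbf b}\rho_U(x)\esssup_{|q|\ge|p|/\sqrt2}U(x,q)$.
\item The line bound $\int_0^t\rho_U(s,z+sq)\,\dd s\lesssim\eps|q|^{-(m-1)/m}t^{1/m}$ yields a factor $\eps t^{1/m}$ per step.
\end{itemize}
Together these give $U^\#=O(t^{N/m})$ in the inner region. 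Rescaling by self-similarity converts this into flatness of $U$ to arbitrary order at velocities $|v|\ge d/t$. That is exactly what controls fast exits in the stopped-characteristic argument away from $y=0$.
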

The solution has finite velocity density and infinite total mass at
every positive time.
The bound in \eqref{eq:main-envelope} is measured at the free spatial
foot $y=x-tv$, and the convergence in \eqref{eq:main-trace} identifies
the trace away from the singular characteristic.
The theorem does not assert uniqueness among all mild solutions with
the same trace. We prove below that the small solution of the equation
without the loss term is unique in the stated ball and that the
constructed brackets coincide.
\subsection{Main ideas of the proof}
We construct an increasing sequence of lower approximations and a
decreasing sequence of upper approximations. To obtain a solution, we
must prove that their limits agree. Neither limit is known to have finite
mass at this stage. We instead prove that their difference has finite
mass, and then use the mass balance and self-similarity to show that the
difference is zero.

\paragraph{1. A solution of the equation without the loss term.}
We first construct a nonnegative function $U$ satisfying
\[
 U^\#(t,y,v)=f_0(y,v)
 +\int_0^t\Qp(U,U)(s,y+sv,v)\,\dd s.
\]
This is the characteristic form of the equation with the loss term
removed. The bilinear estimate in Proposition~\ref{prop:critical-gain}
controls the integral in the norm $Y_m$ by
$C_{m,\mathbf{b}}\|U\|_{Y_m}^2$.
For small $\eps$, a contraction argument therefore gives a global
solution with $\|U\|_{Y_m}\le 2C_{m,\phi}\eps$, where
$\|f_0(x-tv,v)\|_{Y_m}\le C_{m,\phi}\eps$.
All the lower and upper approximations will be bounded by $U$.

To prove the gain estimate, we use the Carleman representation with
$a=v-v'$, $c=v-v_*'$, and $a\cdot c=0$. This orthogonality allows us to
bound the product of the two weights by a sum of two terms. We integrate
one variable over the plane $a^\perp$, with measure
$\dd\mathcal H^2(c)$, and then integrate the remaining weight along a
line. The resulting integrals are estimated in
\eqref{eq:carleman-plane-bound}--\eqref{eq:carleman-final-integral}.

\paragraph{2. Lower and upper approximations.}
The gain term increases when its input increases, but the exponential
loss factor decreases. Indeed, if $g\le h$, then
$\Qp(g,g)\le\Qp(h,h)$, whereas $\nu_g\le\nu_h$ gives
$e^{-A_g}\ge e^{-A_h}$. For this reason, a single Picard sequence need
not be monotone. We use two sequences, choosing the gain and collision
frequency separately to preserve the lower and upper bounds.

Starting from $f_0^{\mathrm{lo}}=0$ and $f_0^{\mathrm{up}}=U$, we use the
Kaniel--Shinbrot iteration to construct lower approximations
$f_n^{\mathrm{lo}}$ and upper approximations $f_n^{\mathrm{up}}$.
For $f_{n+1}^{\mathrm{lo}}$, we use the smaller gain
$\Qp(f_n^{\mathrm{lo}},f_n^{\mathrm{lo}})$ and the larger collision
frequency $\nu_{f_n^{\mathrm{up}}}$. For $f_{n+1}^{\mathrm{up}}$, we use
the larger gain $\Qp(f_n^{\mathrm{up}},f_n^{\mathrm{up}})$ and the smaller
collision frequency $\nu_{f_n^{\mathrm{lo}}}$; see
\eqref{eq:lower-iterate}--\eqref{eq:upper-iterate}.
Each step solves a linear transport equation with nonnegative source
and damping, and hence preserves nonnegativity. These choices make the
lower sequence increase and the upper sequence decrease, while both
remain bounded by $U$. More precisely,
\[
 0\le f_n^{\mathrm{lo}}\le f_{n+1}^{\mathrm{lo}}
 \le f_{n+1}^{\mathrm{up}}\le f_n^{\mathrm{up}}\le U.
\]
Thus the limits
\[
 f^{\mathrm{lo}}:=\lim_{n\to\infty}f_n^{\mathrm{lo}},\qquad
 f^{\mathrm{up}}:=\lim_{n\to\infty}f_n^{\mathrm{up}}
\]
exist and satisfy $0\le f^{\mathrm{lo}}\le f^{\mathrm{up}}\le U$.
Both limits have the prescribed trace and satisfy the kinetic scaling.
They solve a coupled system: the equation for $f^{\mathrm{lo}}$ contains
the collision frequency $\nu_{f^{\mathrm{up}}}$, and the equation for
$f^{\mathrm{up}}$ contains $\nu_{f^{\mathrm{lo}}}$. Thus, once we prove
$f^{\mathrm{lo}}=f^{\mathrm{up}}$, their common value solves the full
Boltzmann equation.

\paragraph{3. A short-time estimate for the difference.}
Set $w:=f^{\mathrm{up}}-f^{\mathrm{lo}}\ge0$. We estimate $w$ on the fixed
annulus
\[
 \mathcal A_m:=\{(y,v):1/2\le\varrho_m(y,v)\le2\}.
\]
The limits $f^{\mathrm{lo}}$ and $f^{\mathrm{up}}$ have the same initial
trace. However, the bound for the collision frequency is proportional
to $\eps/t$, which is not integrable at time zero. We therefore need
an estimate for the rate at which $w$ tends to zero as $t\downarrow0$.

We divide $\mathcal A_m$ into a region near $y=0$ and a region away from
$y=0$. In the first region, $|v|$ is bounded below, so the support
condition $|v|\lesssim |y|^{1/m}$ makes the initial trace zero.
We apply the high-velocity estimate to the integral formula for $U$
and repeat this estimate a fixed number of times. At each collision,
energy conservation ensures that
at least one incoming velocity remains large enough to apply the
high-velocity estimate. For sufficiently small regions and times, the
support condition makes all the free-transport terms in this iteration
zero. The density integral along each characteristic gives a factor
$t^{1/m}$. Repeating the estimate gives a high power of $t$ for $U$,
and therefore for $w\le U$.

Away from $y=0$, subtracting the two limit equations gives a linear
collision inequality for $w$ with a positive right-hand side. We iterate
this inequality along characteristics and stop each characteristic when
it leaves the spatial region. Since the successive regions are separated
by a positive distance, a characteristic that exits in a short time has
large velocity. We bound its contribution by rescaling the estimate near
$y=0$.
Choose an integer
\[
 N_m>\frac{2m+2}{m-1}.
\]
Combining the two regions gives
\[
 \esssup_{(y,v)\in\mathcal A_m}w^\#(t,y,v)\le C_mt^{N_m}
\]
for sufficiently small $t>0$; see
Proposition~\ref{prop:finite-gap-main}.

\paragraph{4. Scaling and the mass of the difference.}
Self-similarity converts the short-time estimate into decay at large
positions and velocities. For each fixed large scale $R$ and almost
every $(y,v)\in\mathcal A_m$, we have
\[
 w^\#(1,R^my,Rv)=R^{-(m+2)}w^\#(R^{-(m-1)},y,v)
 \le C_mR^{-m-2-(m-1)N_m}.
\]
Using dyadic scales, we obtain
$w^\#(1,y,v)\le
C_m\varrho_m(y,v)^{-m-2-(m-1)N_m}$ for large $\varrho_m(y,v)$.
The dilation $(y,v)\mapsto(R^my,Rv)$ multiplies volume by $R^{3m+3}$,
and $|v|\le\varrho_m(y,v)$. The choice of $N_m$ makes this decay, together
with the bound $w^\#\le C_m\eps\varrho_m^{-m-2}$ near the origin,
integrable at
time one. Scaling then gives
\[
 \int_{\R^6}\langle v\rangle w(t,x,v)\,\dd x\dd v<\infty
 \qquad(t>0).
\]
This explains the choice of $N_m$: it makes
both the mass and the first absolute velocity moment of $w$ finite.

We can now integrate the equation for $w$ over space and velocity.
Write
\[
 M(t):=\int_{\R^6}w(t,x,v)\,\dd x\dd v.
\]
The finite first velocity moment allows us to remove the spatial cutoff
in this calculation. The Maxwell collision identity and the density
bound $\rho_U(t,x)\le A_{\rho,m}\eps/t$ from
\eqref{eq:density-envelope} give
\[
 M'(t)\le\frac{2\beta_{\mathbf{b}}A_{\rho,m}\eps}{t}M(t).
\]
On the other hand, the exact scaling of $w$ gives
\[
 M(t)=t^{\frac{2m+1}{m-1}}M(1),\qquad
 M'(t)=\frac{2m+1}{(m-1)t}M(t).
\]
Here the change of variables in space and velocity contributes
$t^{(3m+3)/(m-1)}$, while the amplitude contributes
$t^{-(m+2)/(m-1)}$. If
$2\beta_{\mathbf{b}}A_{\rho,m}\eps<(2m+1)/(m-1)$, the two relations for
$M'$ force $M=0$.
Since $w\ge0$, this proves $w=0$ and hence
$f^{\mathrm{lo}}=f^{\mathrm{up}}$. Their common value
$f$ is the required solution. Its bounds and scaling follow from the
construction, and the localization estimates give the stated initial
trace.

\paragraph{On the smallness assumption.}
We do not know whether Theorem~\ref{thm:main} holds for every $\eps>0$
when the kernel is nontrivial and the profile $\phi$ is fixed. Since the
kinetic dilation leaves the homogeneous trace unchanged, it cannot
reduce the amplitude of the data. Our proof uses smallness in two places: the construction of $U$
requires $4C_{m,\mathbf{b}}C_{m,\phi}\eps<1$, and the mass argument requires
$2\beta_{\mathbf{b}}A_{\rho,m}\eps<(2m+1)/(m-1)$. A higher-order
short-time estimate would improve the decay of $w$, but would not change
the mass-scaling exponent. It therefore
would not remove the second restriction by itself.

These restrictions do not show that solutions fail to exist for large
amplitudes. Duan--Huang--Wang--Yang \cite{DuanHuangWangYang} allow
perturbations of a global Maxwellian with large weighted
$L^\infty_{x,v}$ amplitude, while requiring small relative
entropy and a small $L_x^1L_v^\infty$ norm of the normalized
perturbation. Their result therefore retains a smallness
assumption in these integral quantities; it is not a global
existence theorem for arbitrary large data. Moreover, its
weighted boundedness requirement excludes the singular
initial trace considered here. Likewise, blowup for the equation
without the loss term
\cite{AndreassonCalogeroIllner} does not imply blowup for the full
equation, which contains the loss term.
The work of Jia and \v Sver\'ak \cite{JiaSverak} suggests a possible
approach through a priori estimates and compactness or degree theory.
To apply such an approach here, one would need estimates for the collision
density and the homogeneous far trace without a smallness assumption.
One would also need to control the nonlinear collision term when passing
to a limit. These estimates may require keeping the loss term. We do not
prove them here.

\paragraph{Organization of the paper.}
Section~\ref{sec:preliminaries} records the scaling, collision geometry,
and properties of the trace. Section~\ref{sec:gain} proves the gain
estimate and constructs $U$. Section~\ref{sec:KS} constructs the lower
and upper approximations, and Section~\ref{sec:gap-flatness} proves the
short-time estimate for their difference. Section~\ref{sec:closure}
proves that the limits agree and completes the proof of the main theorem.
Section~\ref{sec:profile} discusses the self-similar profile.
\section{Scaling, collision geometry, and the trace}
\label{sec:preliminaries}
We record the collision identities and the scaling properties of the
initial trace. The orthogonality of the collision displacements will be
used in the gain estimate.

The collision rule \eqref{eq:collision-rule-intro} preserves momentum and
energy.  Set
\[
 q:=v-v_*,
 \qquad
 a:=v-v'=\frac{q-|q|\sigma}{2},
 \qquad
 c:=v-v_*'=\frac{q+|q|\sigma}{2}.
\]
Then
\begin{equation}
 a\cdot c=0,
 \qquad
 |a|^2=\frac{|q|^2}{2}(1-\widehat q\cdot\sigma),
 \qquad
 |c|^2=\frac{|q|^2}{2}(1+\widehat q\cdot\sigma).
 \label{eq:orthogonal-displacements}
\end{equation}
Consequently,
\begin{equation}
 |a|^2+|c|^2=|q|^2,\qquad |a|,|c|\leq |q|.
 \label{eq:displacement-comparability}
\end{equation}
Figure~\ref{fig:collision-geometry} illustrates these identities.
\begin{figure}[htbp]
\centering
\begin{tikzpicture}[x=1cm,y=1cm,>={Latex[length=2mm,width=1.4mm]},font=\small,
  vector/.style={->,line width=0.9pt},
  guide/.style={draw=black!45,densely dotted,line width=0.6pt}]
 \definecolor{collisionQ}{HTML}{0072B2}
 \definecolor{collisionSigma}{HTML}{D55E00}
 \definecolor{collisionA}{HTML}{87549C}
 \definecolor{collisionC}{HTML}{007F5F}
 \coordinate (vs) at (0,0);
 \coordinate (vp) at (4.330127,0);
 \coordinate (vsp) at (0,2.5);
 \coordinate (v) at (4.330127,2.5);
 \coordinate (M) at (2.165064,1.25);
 \draw[draw=black!50,line width=0.65pt] (M) circle[radius=2.5];
 \draw[guide] (vs)--(vp);
 \draw[guide] (vs)--(vsp);
 \draw[vector,draw=collisionQ] (vs)--(v)
   node[pos=0.28,above,sloped,fill=white,inner sep=1.2pt,text=collisionQ] {$q$};
 \draw[vector,dashed,draw=collisionSigma] (vsp)--(vp)
   node[pos=0.23,above,sloped,fill=white,inner sep=1.2pt,text=collisionSigma]
   {$|q|\sigma$};
 \draw[vector,draw=collisionC] (vsp)--(v) node[midway,above=4pt,text=collisionC] {$c=v-v_*'$};
 \draw[vector,draw=collisionA] (vp)--(v) node[midway,sloped,above=5pt,text=collisionA] {$a=v-v'$};
 \draw[line width=0.55pt] (3.930127,2.5)--(3.930127,2.1)--(4.330127,2.1);
 \draw[line width=0.55pt] (M) ++(-30:0.60)
   arc[start angle=-30,end angle=30,radius=0.60];
 \node at (3.02,1.25) {$\theta$};
 \fill (M) circle (1pt) node[below=5pt] {$M$};
 \fill (vs) circle (1.25pt) node[below left=2pt] {$v_*$};
 \fill (vp) circle (1.25pt) node[below right=2pt] {$v'$};
 \fill (vsp) circle (1.25pt) node[above left=2pt] {$v_*'$};
 \fill (v) circle (1.25pt) node[above right=2pt] {$v$};
 \node[anchor=west] at (6.8,2.15) {${\color{collisionQ}q}={\color{collisionA}a}+{\color{collisionC}c},\qquad {\color{collisionA}a}\cdot{\color{collisionC}c}=0$};
 \node[anchor=west] at (6.8,1.25) {$|{\color{collisionA}a}|=|{\color{collisionQ}q}|\sin(\theta/2)$};
 \node[anchor=west] at (6.8,0.35) {$|{\color{collisionC}c}|=|{\color{collisionQ}q}|\cos(\theta/2)$};
\end{tikzpicture}
\caption{A planar view of a nondegenerate collision in velocity space.
The four velocities lie on the circle with center $M=(v+v_*)/2$
and radius $|q|/2$. The vectors $a$ and $c$ form perpendicular sides of a rectangle,
with $q=a+c$. Here
$\theta=\arccos(\widehat q\cdot\sigma)$.}
\label{fig:collision-geometry}
\end{figure}
Either displacement can be arbitrarily small relative to $|q|$.
We use orthogonality in the gain estimate and the angular integration
identities of Lemma~\ref{lem:angular-integration} in the localization estimates.
The trace satisfies
\begin{equation}
 f_0(r^mx,rv)=r^{-(m+2)}f_0(x,v).
 \label{eq:trace-homogeneity}
\end{equation}
The support condition in \eqref{eq:phi-intro} gives
\begin{equation}
 f_0(x,v)\neq0\quad\Longrightarrow\quad
 |v|^m\leq R_0^m|x|.
 \label{eq:conic-support-main}
\end{equation}
In particular,
\begin{equation}
 0\leq f_0(y,v)\leq C_{m,\phi}\eps
 \varrho_m(y,v)^{-(m+2)}.
 \label{eq:trace-weight-bound}
\end{equation}
Indeed, on the support of $f_0$ one has
$\varrho_m(y,v)\leq C_{m,R_0}|y|^{1/m}$, while outside that support the claim is
trivial.
If an exactly self-similar solution exists and
\[
 F(X,V):=f(1,X,V),
\]
then
\begin{equation}
 f(t,x,v)=t^{-\frac{m+2}{m-1}}
 F\left(\frac{x}{t^{m/(m-1)}},
 \frac{v}{t^{1/(m-1)}}\right).
 \label{eq:profile-ansatz}
\end{equation}
Formally, and distributionally whenever the terms are defined, $F$ solves
\begin{equation}
 -\frac{m+2}{m-1}F-\frac{m}{m-1}X\cdot\nabla_XF
 -\frac{1}{m-1}V\cdot\nabla_VF
 +V\cdot\nabla_XF=Q(F,F).
 \label{eq:profile-equation}
\end{equation}
We construct $f$ in time-dependent variables and recover the profile and
its far trace in Section~\ref{sec:profile}.
\section{The critical gain estimate and the equation without the loss term}
\label{sec:gain}
We first estimate the gain operator in the scaling-invariant space $Y_m$.
We then use this estimate to solve the equation without the loss term.
Its solution $U$ is the initial upper approximation in
Section~\ref{sec:KS}.

Define
\begin{equation}
 \|h\|_{Y_m}:=\esssup_{t>0,\,y,v}
 \varrho_m(y,v)^{m+2}|h^\#(t,y,v)|.
 \label{eq:Y-norm}
\end{equation}
For nonnegative $F,G$, let
\begin{equation}
 \mathcal B(F,G)^\#(t,y,v)
 :=\int_0^t\Qp(F,G)(s,y+sv,v)\,\dd s.
 \label{eq:B-definition}
\end{equation}
\begin{proposition}[Critical anisotropic gain estimate]
\label{prop:critical-gain}
For $0\le\mathbf{b}\in L^\infty([-1,1])$,
\begin{equation}
 \|\mathcal B(F,G)\|_{Y_m}
 \leq C_m\|\mathbf{b}\|_\infty\|F\|_{Y_m}\|G\|_{Y_m}
 \label{eq:critical-gain}
\end{equation}
for all nonnegative measurable $F,G$ with finite $Y_m$ norm.
\end{proposition}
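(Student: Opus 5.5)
The plan is to reduce \eqref{eq:critical-gain} to a pointwise, scale-invariant integral bound and then evaluate that integral with the Carleman representation and the orthogonality \eqref{eq:orthogonal-displacements}.

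\textbf{Step 1: pointwise reduction.} Fix $(t,y,v)$ with $(y,v)\neq(0,0)$ and set $x=y+sv$. In $\Qp(F,G)(s,x,v)$, write $F(s,x,v')=F^\#(s,x-sv',v')$. Since $v'=v-a$, we have $x-sv'=y+sa$, and likewise $x-sv_*'=y+sc$. By the definition of the norm,
\[
 F(s,x,v')\le\|F\|_{Y_m}\,\varrho_m(y+sa,v-a)^{-(m+2)},\qquad
 G(s,x,v_*')\le\|G\|_{Y_m}\,\varrho_m(y+sc,v-c)^{-(m+2)}.
\]
Next, pass to Carleman variables. The measure $\dd\sigma\dd v_*$ becomes $\dd a\,\dd\mathcal H^2(c)$ with $c\in a^\perp$, times a Jacobian $\lesssim\bigl(|a|\,|a+c|\bigr)^{-1}$; note that $|a+c|=|q|$ by \eqref{eq:displacement-comparability}. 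Bounding $\mathbf b$ by $\|\mathbf b\|_\infty$ and extending the time integral to $s\in(0,\infty)$ (admissible because the integrand is nonnegative), we obtain $\mathcal B(F,G)^\#(t,y,v)\le\|\mathbf b\|_\infty\|F\|_{Y_m}\|G\|_{Y_m}\,I(y,v)$, where
\[
 I(y,v):=\int_0^\infty\!\!\int_{\R^3}\!\int_{a^\perp}
 \frac{\varrho_m(y+sa,v-a)^{-(m+2)}\,\varrho_m(y+sc,v-c)^{-(m+2)}}{|a|\,|a+c|}
 \,\dd\mathcal H^2(c)\,\dd a\,\dd s .
\]

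\textbf{Step 2: scaling.} Under $(y,v,s,a,c)\mapsto(r^my,rv,r^{m-1}s,ra,rc)$, the measure contributes $r^{m-1+3+2}$, the kernel contributes $r^{-2}$, and the two weights contribute $r^{-2(m+2)}$. Hence $I(r^my,rv)=r^{-(m+2)}I(y,v)$, and it suffices to prove $\sup_{\varrho_m(y,v)=1}I(y,v)<\infty$.

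\textbf{Step 3: splitting the product.} Write $\varrho_1=\varrho_m(y+sa,v-a)$ and $\varrho_2=\varrho_m(y+sc,v-c)$. On $\{\varrho_1\ge\varrho_2\}$ I bound $\varrho_1^{-(m+2)}\varrho_2^{-(m+2)}\le\varrho_1^{-(m+2)}\min\{\varrho_2,1\}^{-(m+2)}$, and symmetrically on the complement. This is the promised bound of the product by a sum of two terms. The geometric input is that the two weights cannot both be small. Using $a\perp c$, one has $|a|,|c|\le|a-c|$, and $|a-c|\le|v-a|+|v-c|\le\varrho_1+\varrho_2$. Therefore $|v|\le|v-a|+|a|\lesssim\varrho_1+\varrho_2$, and $|y|\le|y+sa|+s|a|$. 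So on the part where both weights are $\le\tfrac12$ we must have $s|a|\gtrsim1$, which gives decay in $s$.

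In each term I then integrate in three stages. First comes the plane integral in $c$, of the form $\int_{a^\perp}\varrho_m(y+sc,v-c)^{-(m+2)}|a+c|^{-1}\dd\mathcal H^2(c)$, estimated uniformly in $(y,v)$ as a power of $s$ and $|a|$; this is where \eqref{eq:carleman-plane-bound} should come from. Second comes the line integral in $s$ of the remaining weight $\varrho_m(y+sa,v-a)^{-(m+2)}$ along the line $s\mapsto y+sa$. It is bounded by $\int_{\R}\bigl(|y_\perp+\tau\widehat a|^2+|v-a|^{2m}\bigr)^{-(m+2)/(2m)}\dd\tau/|a|$, which gives a factor $|a|^{-1}\max\{|v-a|^m,\cdot\}^{\,1-(m+2)/m}$. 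Finally comes the integral over $a\in\R^3$.

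\textbf{Main obstacle.} I expect the difficulty to be the convergence of the combined integrals near $a=0$, near $a+c=0$, and at large $|a|$ and $s$. In particular, the $s$-integral of the product over the region where both weights are comparable yields a radial power in $|a|$ whose integrability relies on $m>1$; this is the angular integral mentioned in the introduction. I would first try to organize the computation by dyadic shells of $\varrho_1$ and $\varrho_2$, check the exponents there, and only then assemble the final integral \eqref{eq:carleman-final-integral}.
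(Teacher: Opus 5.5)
\textbf{There is a genuine gap, in Step~3.} On $\{\varrho_1\ge\varrho_2\}$ you bound $\varrho_1^{-(m+2)}\varrho_2^{-(m+2)}\le\varrho_1^{-(m+2)}\min\{\varrho_2,1\}^{-(m+2)}$. This never uses $\varrho_1\ge\varrho_2$; it only enlarges the integrand. On the branch where the minimum equals $1$, the plane integral $\int_{a^\perp}|a+c|^{-1}\,\dd\mathcal H^2(c)$ diverges.

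What the argument needs is a pointwise lower bound on the larger radius, $\max\{\varrho_1,\varrho_2\}\ge c_m(L+r_{a,c})$, where $L=\varrho_m(y,v)$ and $r_{a,c}=|a+c|$. Then the larger-radius factor can be replaced by a function of $r_{a,c}$ alone. You have the velocity half of this ($|a|,|c|\le|a-c|\le\varrho_1+\varrho_2$ and $|v|\lesssim\varrho_1+\varrho_2$). In space, however, you use only $|y|\le|y+sa|+s|a|$, which is why you expect a region where both weights are small and plan to handle it by decay in $s$.

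The missing idea is that orthogonality also acts on the spatial feet. Since $a\cdot c=0$,
\[
 |y+sa|^2+|y+sc|^2=|y|^2+|y+s(a+c)|^2\ge|y|^2 .
\]
Combine this with $|v-a|^2+|v-c|^2=|v|^2+|v-a-c|^2\ge\max\{|v|^2,r_{a,c}^2/2\}$ and with convexity (using $m>1$). Writing $A=\varrho_1^{2m}$ and $B=\varrho_2^{2m}$, this gives $A+B\ge C_m^{-1}(L^{2m}+r_{a,c}^{2m})$. So the two weights are never simultaneously below $c_mL$, and the region you meant to treat by decay in $s$ is empty. With $p=(m+2)/(2m)$ one gets
\[
 A^{-p}B^{-p}\le C_m(A^{-p}+B^{-p})(L^{2m}+r_{a,c}^{2m})^{-p}.
\]
The Carleman measure $|a|^{-1}r_{a,c}^{-1}\,\dd a\,\dd\mathcal H^2(c)$ is symmetric in $(a,c)$. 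So the $B^{-p}$ term can be relabelled, and the only singular factor left is always $\Phi_m(y+sa,v-a)$.

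\textbf{Your three-stage integration fails without this replacement.} You propose to integrate the actual weight $\varrho_m(y+sc,v-c)^{-(m+2)}$ over $a^\perp$, uniformly in $(y,v)$. For $1<m\le2$ this is impossible. If $v\in a^\perp$ and $y=-sv$, then near $c=v$ the integrand is $\gtrsim s^{-(m+2)/m}|c-v|^{-(m+2)/m}$, which is not integrable in two dimensions. Even for $m>2$ the plane integral depends on $s$, so it does not factor out of the subsequent line integral in $s$.

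After the replacement, the plane integrand depends only on $r_{a,c}$ and not on $s$. Polar coordinates in $a^\perp$ give $2\pi\int_{|a|}^\infty(L^{2m}+r^{2m})^{-p}\,\dd r\le C_m(L+|a|)^{-(m+1)}$. The $s$-integral then gives $C_m|a|^{-1}(|\Pi_ay|^2+|v-a|^{2m})^{-1/m}$, as you anticipated. The final $a$-integral splits into two cases:
\begin{enumerate}
\item[(i)] If $|y|\gtrsim L^m$, the condition $m>1$ enters through the \emph{angular} integral $\int_{\Sph^2}(1-(\widehat y\cdot\omega)^2)^{-1/m}\,\dd\omega$, over directions of $a$ near $\pm\widehat y$. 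It does not enter through a radial power of $|a|$, as your ``Main obstacle'' paragraph suggests.
\item[(ii)] If $|v|\gtrsim L$, one uses $\int_{\R^3}|a|^{-2}|v-a|^{-2}\,\dd a=C|v|^{-1}$.
\end{enumerate}

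\textbf{What is already correct.} Your Step~1 (the free-transport feet $y+sa$, $y+sc$, and extending to $s\in(0,\infty)$) and your Step~2 scaling identity $I(r^my,rv)=r^{-(m+2)}I(y,v)$ are correct.
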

\subsection{Proof of the critical anisotropic gain estimate}
To prove Proposition~\ref{prop:critical-gain}, we use the classical
Carleman representation for the cutoff gain operator; see
\cite{Wennberg1994,MouhotVillaniCutoff,AlonsoCarneiro}. We apply this
representation to the singular anisotropic weight
\[
 \Phi_m(y,v):=(|y|^2+|v|^{2m})^{-\frac{m+2}{2m}}
 =\varrho_m(y,v)^{-(m+2)}.
\]

\begin{lemma}[Carleman representation and collision maps]
\label{lem:cutoff-collision-coordinates}
For nonnegative measurable $F,G$, one has
\begin{equation}
 \Qp(F,G)(v)
 =4\int_{\R^3}\frac{F(v-a)}{|a|}
 \int_{a^\perp}k_{\mathbf{b}}(a,c)G(v-c)\,\dd\mathcal H^2(c)\dd a,
 \label{eq:carleman-gain}
\end{equation}
where $\mathcal H^2$ denotes two-dimensional Hausdorff measure and
\[
 r_{a,c}:=(|a|^2+|c|^2)^{1/2},\qquad
 k_{\mathbf{b}}(a,c):=\frac{1}{r_{a,c}}
 \mathbf{b}\left(\frac{|c|^2-|a|^2}{|a|^2+|c|^2}\right).
\]
For fixed $\sigma$ and $q\ne0$, the collision maps satisfy
\begin{equation}
 \det D_qa=\frac18(1-\widehat q\cdot\sigma),\qquad
 \det D_qc=\frac18(1+\widehat q\cdot\sigma).
 \label{eq:collision-map-jacobians}
\end{equation}
They are injective where their determinants do not vanish. The degenerate
sets are the rays $\widehat q=\sigma$ and $\widehat q=-\sigma$,
respectively; these have zero Lebesgue measure in $q$.
\end{lemma}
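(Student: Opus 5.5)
The identities \eqref{eq:collision-map-jacobians} are a direct computation, and \eqref{eq:carleman-gain} follows from a linear change of variables combined with the coarea formula, so I would do the Jacobians first. For fixed $\sigma$ and $q\ne0$, differentiating $|q|$ gives $D_q|q|=\widehat q^{\,T}$. Hence
\[
D_qa=\tfrac12\bigl(I-\sigma\otimes\widehat q\bigr),\qquad D_qc=\tfrac12\bigl(I+\sigma\otimes\widehat q\bigr).
\]
Since $\det(I+u\otimes w)=1+w\cdot u$ for rank-one perturbations, the two determinants are $\tfrac18(1\mp\widehat q\cdot\sigma)$.

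For injectivity of $q\mapsto a$, I would invert explicitly. From $q=2a+|q|\sigma$, squaring gives $|q|^2=4|a|^2+4|q|\,a\cdot\sigma+|q|^2$. Here $a\cdot\sigma=\tfrac12(q\cdot\sigma-|q|)<0$ unless $\widehat q=\sigma$. So $|q|=-|a|^2/(a\cdot\sigma)$, and therefore $q$, is determined by $a$ off the ray $\widehat q=\sigma$. The same argument with $c$ and the ray $\widehat q=-\sigma$ handles the second map. Both rays are null sets in $\R^3$.

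For \eqref{eq:carleman-gain}, all integrands are nonnegative, so Tonelli permits every change of order. I would substitute $q=v-v_*$ and $u=|q|\sigma\in\R^3$, writing the sphere integral as a surface integral:
\[
\int_{\Sph^2}\Psi(\sigma)\,\dd\sigma=|q|^{-2}\int_{\{|u|=|q|\}}\Psi(\widehat u)\,\dd\mathcal H^2(u).
\]
Next I would pass to the linear coordinates $a=(q-u)/2$, $c=(q+u)/2$, whose Jacobian satisfies $\dd q\,\dd u=8\,\dd a\,\dd c$. In these coordinates $v'=v-a$, $v_*'=v-c$, $q=a+c$, and the angular argument becomes
\[
\widehat q\cdot\sigma=\frac{|c|^2-|a|^2}{|a|^2+|c|^2},\qquad |q|=r_{a,c}.
\]
The constraint $|u|=|q|$ becomes $a\cdot c=0$, since $|u|^2-|q|^2=-4a\cdot c$.

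The only delicate step is converting the surface measure on $\{|u|=|q|\}$ into $|a|^{-1}\,\dd\mathcal H^2(c)$ on $a^\perp$ with the correct constant. Formally, $\delta(|u|-|q|)=2|q|\,\delta(|u|^2-|q|^2)=\tfrac{|q|}{2}\,\delta(a\cdot c)$, and for fixed $a\ne0$, $\delta(a\cdot c)\,\dd c=|a|^{-1}\,\dd\mathcal H^2\lfloor a^\perp$. Collecting constants gives
\[
8\cdot|q|^{-2}\cdot\tfrac{|q|}{2}\cdot|a|^{-1}=\frac{4}{r_{a,c}\,|a|},
\]
which is exactly the factor $4|a|^{-1}k_{\mathbf{b}}(a,c)$ up to $\mathbf b$.

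To make this rigorous, I would replace each delta by the thin-shell averages $(2\eta)^{-1}\mathbf 1_{\{||u|-|q||<\eta\}}$. I would evaluate these once in the $(q,u)$ variables using polar coordinates in $u$, and once in the $(a,c)$ variables using the coarea formula for $c\mapsto a\cdot c$ at fixed $a$. First checking for continuous compactly supported $F,G$ and then letting $\eta\to0$, and extending to nonnegative measurable $F,G$ by monotone convergence, avoids any delta calculus. I expect this bookkeeping of the constant and the justification of the limit to be the main obstacle; the rest is algebra.
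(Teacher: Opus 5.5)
Your proposal is correct and follows essentially the paper's route. The Jacobians via the rank-one determinant formula, and injectivity via the explicit inverse $q=2a-|a|^2\sigma/(a\cdot\sigma)$, are identical to the paper's. Your lift $u=|q|\sigma$ with the linear change $(q,u)\mapsto(a,c)$ and $\delta(|u|-|q|)=\tfrac{|q|}{2}\delta(a\cdot c)$ is a reorganization of the paper's coarea computation. The paper parametrizes the sphere $|a-q/2|=|q|/2$ at fixed $q$ (so $\dd\sigma=4|q|^{-2}\dd\mathcal H^2(a)$, with $|\nabla_a(|a|^2-a\cdot q)|=|q|$), substitutes $c=q-a$, and arrives at the same factor $4/(|a|r_{a,c})$.

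If you write out the thin-shell justification, which the paper does not, two details need care. First, $\mathbf{b}$ is only bounded measurable, so take the limit for continuous $\mathbf{b}$ and then extend; for $F,G\in C_c$ both sides are integrals of $\mathbf{b}$ against finite positive measures on $[-1,1]$. Second, pass from $C_c$ to measurable $F,G$ by uniqueness of locally finite measures, since monotone convergence alone does not reach all measurable functions.
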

\begin{proof}
For fixed $q\ne0$, the map $\sigma\mapsto a=(q-|q|\sigma)/2$
parametrizes the sphere $|a-q/2|=|q|/2$ with
$\dd\sigma=4|q|^{-2}\dd\mathcal H^2(a)$.
On this sphere, $|\nabla_a(|a|^2-a\cdot q)|=|q|$.
For every nonnegative measurable function $H$, the coarea formula gives
\begin{align*}
 \int_{\R^3}\int_{\Sph^2}H(a,c)\,\dd\sigma\dd q
 &=4\int_{\R^3}\int_{\R^3}
 \frac{H(a,q-a)}{|q|}\,
 \delta_0(|a|^2-a\cdot q)\,\dd a\dd q\\
 &=4\int_{\R^3}\int_{\R^3}
 \frac{H(a,c)}{|a+c|}\,
 \delta_0(a\cdot c)\,\dd c\dd a\\
 &=4\int_{\R^3}\frac{1}{|a|}
 \int_{a^\perp}\frac{H(a,c)}{r_{a,c}}
 \,\dd\mathcal H^2(c)\dd a,
\end{align*}
where $\delta_0$ denotes the Dirac mass at zero and we used $c=q-a$ in
the second line. In \eqref{eq:gain-intro}, set $q=v-v_*$, so that
$\dd v_*=\dd q$. Since $\sigma=(c-a)/r_{a,c}$ and $q=a+c$, taking
\[
 H(a,c)=\mathbf{b}\left(
 \frac{|c|^2-|a|^2}{|a|^2+|c|^2}\right)F(v-a)G(v-c)
\]
proves \eqref{eq:carleman-gain}. The change $\sigma\mapsto-\sigma$ exchanges
$a,c$, so the measure
$|a|^{-1}r_{a,c}^{-1}\dd a\dd\mathcal H^2(c)$ is symmetric in $a,c$.
Finally,
\[
 D_qa=\tfrac12(I-\sigma\otimes\widehat q),\qquad
 D_qc=\tfrac12(I+\sigma\otimes\widehat q),
\]
which gives \eqref{eq:collision-map-jacobians}. Injectivity follows from
the inverse formulas
\[
 q=2a-\frac{|a|^2}{a\cdot\sigma}\sigma
 \quad(a\cdot\sigma<0),\qquad
 q=2c-\frac{|c|^2}{c\cdot\sigma}\sigma
 \quad(c\cdot\sigma>0).
\]
\end{proof}

\begin{lemma}[Carleman integral estimate]
\label{lem:resonance-integral}
For every $(y,v)\ne(0,0)$,
\begin{equation}
 \mathfrak I(y,v):=
 \int_0^\infty\int_{\R^3}\int_{a^\perp}
 \frac{\Phi_m(y+sa,v-a)\Phi_m(y+sc,v-c)}{|a|r_{a,c}}
 \,\dd\mathcal H^2(c)\dd a\dd s
 \le C_m\Phi_m(y,v).
 \label{eq:resonance-integral}
\end{equation}
\end{lemma}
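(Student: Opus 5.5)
By the kinetic homogeneity, it suffices to prove \eqref{eq:resonance-integral} on the unit set $\varrho_m(y,v)=1$. Under $(y,v,a,c,s)\mapsto(r^my,rv,ra,rc,r^{m-1}s)$ the measure $|a|^{-1}r_{a,c}^{-1}\dd\mathcal H^2(c)\dd a\dd s$ gains the factor $r^{3}\cdot r^{2}\cdot r^{m-1}\cdot r^{-2}=r^{m+2}$. The product of the two weights loses $r^{-2(m+2)}$. Hence $\mathfrak I(r^my,rv)=r^{-(m+2)}\mathfrak I(y,v)$, which is exactly the homogeneity of $\Phi_m$. The task is therefore to bound $\mathfrak I$ by a constant $C_m$ uniformly on the compact anisotropic sphere.

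\textbf{Step 1: a pointwise inequality from orthogonality.} Set $\Phi_1:=\Phi_m(y+sa,v-a)$ and $\Phi_2:=\Phi_m(y+sc,v-c)$. Since $a\cdot c=0$, expanding squares gives
\[
|y+sa|^2+|y+sc|^2=|y|^2+|y+s(a+c)|^2\ge|y|^2 .
\]
Energy conservation, $|v-a|^2+|v-c|^2=|v'|^2+|v_*'|^2\ge|v|^2$, gives the velocity analogue. Consequently
\[
\max\{\varrho_m(y+sa,v-a),\varrho_m(y+sc,v-c)\}\ge c_m\varrho_m(y,v),
\]
that is, $\min\{\Phi_1,\Phi_2\}\le C_m\Phi_m(y,v)$. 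I plan to use this in the interpolated form
\[
\Phi_1\Phi_2=\max\cdot\min\le C_m\Phi_m(y,v)^{1-\delta}\bigl(\Phi_1\Phi_2^{\delta}+\Phi_2\Phi_1^{\delta}\bigr)
\]
for a small $\delta=\delta_m\in(0,1)$. The point of keeping a power of the smaller factor is that the crude bound $\Phi_1\Phi_2\le C\Phi_m(y,v)(\Phi_1+\Phi_2)$ loses all decay in the plane variable. Indeed, $\int_{a^\perp}r_{a,c}^{-1}\dd\mathcal H^2(c)$ diverges linearly, and the symmetric version diverges logarithmically.

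\textbf{Step 2: reduction to one term.} The measure is symmetric under $a\leftrightarrow c$, as noted at the end of the proof of Lemma~\ref{lem:cutoff-collision-coordinates}. It therefore suffices, with $\varrho_m(y,v)=1$, to bound
\[
J:=\int_0^\infty\int_{\R^3}\frac{\Phi_m(y+sa,v-a)}{|a|}\int_{a^\perp}\frac{\Phi_m(y+sc,v-c)^{\delta}}{r_{a,c}}\dd\mathcal H^2(c)\dd a\dd s .
\]
For the inner plane integral, I will use $\Phi_m(y+sc,v-c)^\delta\le|v-c|^{-(m+2)\delta}$. For $|c|\gg1$ this gives decay $|c|^{-(m+2)\delta-1}$ against the two-dimensional measure. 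That decay is integrable provided $(m+2)\delta>1$, and local integrability requires $(m+2)\delta<2$; I will choose $\delta$ accordingly. I expect the bound to be of the form $C\langle a\rangle^{\kappa}$ with some mild growth in $|a|$, or else to require splitting the region $|c|\lesssim|a|$ from $|c|\gg|a|$. The spatial part $|y+sc|$ can supply extra decay when $s|c|$ is large, if that is needed.

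\textbf{Step 3: the line and outer integrals.} Next comes the $s$-integral along the line $y+sa$:
\[
\int_0^\infty\Phi_m(y+sa,v-a)\dd s\lesssim\int_0^\infty\bigl(|y+sa|^2+|v-a|^{2m}\bigr)^{-\frac{m+2}{2m}}\dd s\lesssim\frac{1}{|a|}\,|v-a|^{-(m+2)+m}.
\]
Here the exponent $(m+2)/m>1$ makes the line integral converge, and the line contributes one power of $|a|^{-1}$. What remains is an integral in $a\in\R^3$ of $|a|^{-2}|v-a|^{-2}$, times the plane factor from Step 2. It converges near $a=0$, near $a=v$ (three dimensions), and at infinity, provided the growth exponent from Step 2 is small. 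The weight $\Phi_m(y,v)^{1-\delta}\le C$ on the unit sphere then closes the estimate.

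\textbf{Main obstacle.} The hardest step is the bookkeeping in Steps 2--3: choosing $\delta$, and possibly splitting $|c|\lessgtr|a|$, so that the plane, line and $a$-integrals all converge simultaneously and uniformly in $(y,v)$ on the unit sphere. This includes configurations where $v$ is small and $|y|\approx1$, in which the spatial decay $|y+sa|$ must carry the estimate. If a single $\delta$ fails, I would first split $\R^3_a$ into $|a|\le2|v|+1$ and its complement, treating the complement with decay from both factors.
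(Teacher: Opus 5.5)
Your route is genuinely different from the paper's, and it can be completed. As written, however, Step~3 does not close uniformly on $\varrho_m(y,v)=1$, and the missing piece is exactly where $m>1$ enters.

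\textbf{Comparison.} In Step~1 you wrote $|v-a|^2+|v-c|^2=|v|^2+|v-a-c|^2$, but used only that it is at least $|v|^2$. The paper also uses that it is at least $r_{a,c}^2/2$. With the spatial identity this gives $A+B\ge C_m^{-1}(L^{2m}+r_{a,c}^{2m})$. So the smaller factor is bounded by $(L^{2m}+r_{a,c}^{2m})^{-(m+2)/(2m)}$, a function of $r_{a,c}$ alone. It carries the factor $\Phi_m(y,v)$ and gives decay in the plane at the same time. The plane integral is then explicit, $\le C_m(L+|a|)^{-(m+1)}$, with no free parameter.

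You recover plane decay by interpolation instead, and that works. Set $\gamma:=(m+2)\delta\in(1,2)$ and project $v$ onto $a^\perp$. The plane integral becomes a two-dimensional Riesz-type convolution, bounded by $C_\gamma|a|^{1-\gamma}$ uniformly in $v$ and $s$. This bound decays at infinity and is mildly singular at $a=0$; for $v=0$ it equals $C|a|^{1-\gamma}$ exactly. It is not of the form $C\bracket{a}^\kappa$ you anticipated. No spatial decay is needed inside the plane integral.

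\textbf{The gap.} In Step~3 you drop the spatial term of the line integral. The remaining integral is $\int_{\R^3}|a|^{-1-\gamma}|v-a|^{-2}\dd a=C_\gamma|v|^{-\gamma}$. This is unbounded on the unit anisotropic sphere as $v\to0$. You must keep the transverse term: the $s$-integral is at most $C_m|a|^{-1}(|\Pi_ay|^2+|v-a|^{2m})^{-1/m}$, where $\Pi_ay$ is the projection of $y$ onto $a^\perp$. Then split as the paper does.

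If $|v|\ge2^{-1/(2m)}$, drop the spatial term; the bound $C_\gamma|v|^{-\gamma}\le C$ follows. Otherwise $|y|\ge1/\sqrt2$ and $|v|\le1$, and you split again in $|a|$:
\begin{itemize}
\item On $|a|\le2$, use $|\Pi_ay|^{-2/m}$. Polar coordinates give $|y|^{-2/m}\int_0^2r^{1-\gamma}\dd r\int_{\Sph^2}(1-(\widehat y\cdot\omega)^2)^{-1/m}\dd\omega$. This is finite because $\gamma<2$ and, for the angular factor, precisely because $m>1$. Your proposal never locates this use of $m>1$.
\item On $|a|>2$, use $|v-a|\ge|a|/2$, which leaves $\int_2^\infty r^{-1-\gamma}\dd r$.
\end{itemize}
Your version needs this sub-split in $|a|$ because $|a|^{1-\gamma}$ decays too slowly to discard the velocity term at large $|a|$. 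The paper's factor $(L+|a|)^{-(m+1)}$ makes it unnecessary.

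With these additions your proof is complete. The paper's argument is shorter and parameter-free. Yours avoids the $r_{a,c}$ lower bound, at the cost of choosing $\delta\in\bigl(1/(m+2),2/(m+2)\bigr)$ and proving a planar Riesz-potential estimate.
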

\begin{proof}
Write $L:=\varrho_m(y,v)>0$. For $a\ne0$, the plane integral used below is
\begin{equation}
 \int_{a^\perp}\frac{\dd\mathcal H^2(c)}
 {r_{a,c}(L^{2m}+r_{a,c}^{2m})^{(m+2)/(2m)}}
 =2\pi\int_{|a|}^\infty
 \frac{\dd r}{(L^{2m}+r^{2m})^{(m+2)/(2m)}}
 \le\frac{C_m}{(L+|a|)^{m+1}}.
 \label{eq:carleman-plane-bound}
\end{equation}
Indeed, use polar coordinates on $a^\perp$ and then
$r=(|a|^2+|c|^2)^{1/2}$.

Set $A:=|y+sa|^2+|v-a|^{2m}$ and
$B:=|y+sc|^2+|v-c|^{2m}$. Orthogonality gives
\[
 |y+sa|^2+|y+sc|^2=|y|^2+|y+s(a+c)|^2\ge|y|^2,
\]
while
$|v-a|^2+|v-c|^2=|v|^2+|v-a-c|^2$
is at least $|v|^2$ and at least $r_{a,c}^2/2$.
Since $m>1$, convexity also gives
\[
 |v-a|^{2m}+|v-c|^{2m}
 \geq 2^{1-m}\bigl(|v-a|^2+|v-c|^2\bigr)^m
 \geq c_m\bigl(|v|^{2m}+r_{a,c}^{2m}\bigr).
\]
Consequently,
\[
 A+B\ge C_m^{-1}(L^{2m}+r_{a,c}^{2m}),
\]
and hence
\[
 A^{-\frac{m+2}{2m}}B^{-\frac{m+2}{2m}}
 \le\frac{C_m
 (A^{-\frac{m+2}{2m}}+B^{-\frac{m+2}{2m}})}
 {(L^{2m}+r_{a,c}^{2m})^{(m+2)/(2m)}}.
\]
By symmetry in $a,c$ and \eqref{eq:carleman-plane-bound},
\begin{equation}
 \mathfrak I(y,v)
 \le C_m\int_{\R^3}\frac{1}{|a|(L+|a|)^{m+1}}
 \int_0^\infty\Phi_m(y+sa,v-a)\,\dd s\dd a.
 \label{eq:carleman-reduction}
\end{equation}
For $a\ne0$, let
$\Pi_a y:=y-(y\cdot a)a/|a|^2$.
Translate time so that the point on the line closest to the origin
corresponds to time zero. Extending the integral to $\R$ gives
\[
 \int_0^\infty\Phi_m(y+sa,v-a)\,\dd s
 \le\frac{C_m}{|a|}
 \bigl(|\Pi_a y|^2+|v-a|^{2m}\bigr)^{-1/m}.
\]
It remains to bound
\begin{equation}
 \int_{\R^3}
 \frac{\bigl(|\Pi_a y|^2+|v-a|^{2m}\bigr)^{-1/m}}
 {|a|^2(L+|a|)^{m+1}}\,\dd a
 \le C_mL^{-(m+2)}.
 \label{eq:carleman-final-integral}
\end{equation}
Since $L^{2m}=|y|^2+|v|^{2m}$, either
$|y|\ge L^m/\sqrt2$ or $|v|\ge2^{-1/(2m)}L$.
In the first case, write $a=r\omega$ and discard
the velocity term. The left-hand side of
\eqref{eq:carleman-final-integral} is at most
\[
 C_m|y|^{-2/m}\int_0^\infty\frac{\dd r}{(L+r)^{m+1}}
 \int_{\Sph^2}\bigl(1-(\widehat y\cdot\omega)^2\bigr)^{-1/m}
 \,\dd\omega
 \le C_m|y|^{-2/m}L^{-m}\le C_mL^{-(m+2)}.
\]
The angular integral is finite precisely because $m>1$. In the second
case, discard the spatial term and use
$(L+|a|)^{-(m+1)}\le L^{-(m+1)}$. Then
\[
 \int_{\R^3}\frac{\dd a}{|a|^2(L+|a|)^{m+1}|v-a|^2}
 \le C_mL^{-(m+1)}|v|^{-1}\le C_mL^{-(m+2)}.
\]
Here $\int_{\R^3}|a|^{-2}|v-a|^{-2}\,\dd a=C|v|^{-1}$ follows
by scaling and rotation; the singularities at $0,v$ and the tail are
integrable. This proves \eqref{eq:carleman-final-integral} and the lemma.
\end{proof}

\begin{proof}[Proof of Proposition~\ref{prop:critical-gain}]
At the spacetime point $(s,y+sv)$, the free spatial feet of the incoming
velocities $v-a$ and $v-c$ are $y+sa$ and $y+sc$.
Thus, for almost every choice of the variables,
\[
 \begin{aligned}
 F(s,y+sv,v-a)
 &=F^\#(s,y+sa,v-a)
 \leq \|F\|_{Y_m}\Phi_m(y+sa,v-a),\\
 G(s,y+sv,v-c)
 &=G^\#(s,y+sc,v-c)
 \leq \|G\|_{Y_m}\Phi_m(y+sc,v-c).
 \end{aligned}
\]
Using \eqref{eq:carleman-gain} and
$0\le k_{\mathbf{b}}(a,c)\le\|\mathbf{b}\|_\infty/r_{a,c}$, for
almost every $(t,y,v)$ with $(y,v)\ne(0,0)$ we obtain
\begin{align*}
 \mathcal B(F,G)^\#(t,y,v)
 &\le4\|\mathbf{b}\|_\infty\|F\|_{Y_m}\|G\|_{Y_m}\\[-2mm]
 &\quad\times\int_0^t\int_{\R^3}\int_{a^\perp}
 \frac{\Phi_m(y+sa,v-a)\Phi_m(y+sc,v-c)}{|a|r_{a,c}}
 \,\dd\mathcal H^2(c)\dd a\dd s\\
 &\le4\|\mathbf{b}\|_\infty\|F\|_{Y_m}\|G\|_{Y_m}
 \mathfrak I(y,v)\\
 &\le C_m\|\mathbf{b}\|_\infty
 \|F\|_{Y_m}\|G\|_{Y_m}\Phi_m(y,v).
\end{align*}
The point $(y,v)=(0,0)$ does not affect the essential supremum.
Multiplying by $\varrho_m(y,v)^{m+2}$ and taking the essential supremum
proves \eqref{eq:critical-gain}.
\end{proof}
The gain estimate and the construction below use only
$0\le\mathbf{b}\in L^\infty([-1,1])$. The condition
$\Lambda_{\mathbf{b}}<\infty$ is used later in the angular integration
identities and the localization estimates.
\subsection{Solving the equation without the loss term}
We now apply the gain estimate in a contraction argument. This gives
the solution used to start the upper sequence.
Let $f^{\mathrm{fr}}(t,x,v)=f_0(x-tv,v)$.  By
\eqref{eq:trace-weight-bound},
\begin{equation}
 \|f^{\mathrm{fr}}\|_{Y_m}\leq C_{m,\phi}\eps.
 \label{eq:free-Y-bound}
\end{equation}
Consider the map
\[
 \mathcal T(U):=f^{\mathrm{fr}}+\mathcal B(U,U).
\]
To construct a global mild solution of the equation without the loss term
\[
 (\partial_t+v\cdot\nabla_x)U=\Qp(U,U),
 \qquad U(0,x,v)=f_0(x,v),
\]
it suffices to find a nonnegative fixed point $U=\mathcal T(U)$ in $Y_m$.
Indeed, by \eqref{eq:B-definition}, this fixed-point identity is the
characteristic formula \eqref{eq:gain-only-mild}. We obtain the fixed
point by the contraction mapping theorem.
\begin{proposition}[Global solution of the equation without the loss term]
\label{prop:gain-only}
For $\eps$ sufficiently small, $\mathcal T$ has a unique nonnegative fixed
point in the ball
\[
 \{U:\|U\|_{Y_m}\leq2C_{m,\phi}\eps\}.
\]
It satisfies
\begin{equation}
 U^\#(t,y,v)=f_0(y,v)
 +\int_0^t\Qp(U,U)(s,y+sv,v)\,\dd s
 \label{eq:gain-only-mild}
\end{equation}
and
\begin{equation}
 0\leq U^\#(t,y,v)\leq A_{0,m}\eps
 \varrho_m(y,v)^{-(m+2)}.
 \label{eq:gain-envelope}
\end{equation}
Moreover, $U$ is exactly invariant under \eqref{eq:kinetic-scaling-intro}.
\end{proposition}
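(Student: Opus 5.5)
We apply the contraction mapping theorem to $\mathcal T$ on the closed set
\[
 \mathcal K:=\{U\ge0\ \text{measurable}:\ \|U\|_{Y_m}\le 2C_{m,\phi}\eps\},
\]
where $Y_m$ is identified with the weighted $L^\infty$ space of the functions $U^\#$. This space is complete because $h\mapsto h^\#$ is a measure-preserving change of variables for each $t$. The set $\mathcal K$ is closed in $Y_m$, since nonnegativity and the norm bound survive a.e. limits. Write $K:=C_m\|\mathbf b\|_\infty$ for the constant of Proposition~\ref{prop:critical-gain}.

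\emph{Invariance.} For $U\in\mathcal K$, $\mathcal T(U)\ge0$ because $f^{\mathrm{fr}}\ge0$ and $\Qp$ preserves nonnegativity. By \eqref{eq:free-Y-bound} and \eqref{eq:critical-gain},
\[
 \|\mathcal T(U)\|_{Y_m}\le C_{m,\phi}\eps+K(2C_{m,\phi}\eps)^2\le 2C_{m,\phi}\eps
 \quad\text{if } 4KC_{m,\phi}\eps\le1.
\]

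\emph{Contraction.} Bilinearity gives $\Qp(U,U)-\Qp(V,V)=\Qp(U-V,U)+\Qp(V,U-V)$, but the differences are signed while Proposition~\ref{prop:critical-gain} is stated for nonnegative inputs. We handle this with the pointwise bound $|\Qp(F,G)|\le\Qp(|F|,|G|)$ and positivity of the kernel. Then
\[
 \|\mathcal B(U,U)-\mathcal B(V,V)\|_{Y_m}\le K(\|U\|_{Y_m}+\|V\|_{Y_m})\|U-V\|_{Y_m}\le 4KC_{m,\phi}\eps\,\|U-V\|_{Y_m},
\]
which is a strict contraction once $4KC_{m,\phi}\eps<1$. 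This yields a unique fixed point in $\mathcal K$. Evaluating $U=\mathcal T(U)$ along characteristics gives exactly \eqref{eq:gain-only-mild}, because $(f^{\mathrm{fr}})^\#(t,y,v)=f_0(y,v)$ and \eqref{eq:B-definition} holds. The bound \eqref{eq:gain-envelope} with $A_{0,m}=2C_{m,\phi}$ is then the definition of the $Y_m$ norm. A point needing some care is that $\Qp(U,U)$ must be finite a.e. for the characteristic identity to make sense. This follows because the bound from Lemma~\ref{lem:resonance-integral} controls the time integral pointwise for a.e. $(y,v)\neq(0,0)$, so the integrand is finite for a.e. $s$.

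\emph{Self-similarity.} This is the step I expect to require the most checking, though it is also fairly routine. Define $S_rU(t,x,v):=r^{m+2}U(r^{m-1}t,r^mx,rv)$. I would verify three things.
\begin{enumerate}[label=(\alpha*)]
\item $S_r$ is an isometry of $Y_m$. The key identity is
\[
 (S_rU)^\#(t,y,v)=r^{m+2}U^\#(r^{m-1}t,r^my,rv),
\]
which holds since $r^my+r^{m-1}t\cdot rv=r^m(y+tv)$; together with $\varrho_m(r^my,rv)=r\varrho_m(y,v)$ and the essential supremum over all $t>0$, this gives the isometry. Hence $S_r$ preserves $\mathcal K$.
\item $S_rf^{\mathrm{fr}}=f^{\mathrm{fr}}$, by \eqref{eq:trace-homogeneity}.
\item $S_r\mathcal B(U,U)=\mathcal B(S_rU,S_rU)$. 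This comes from the dilation computation in the introduction with $\lambda=r^{m-1}$, $\mu=r$: $\Qp$ scales like $\lambda^2\mu^3$, and the substitution $s\mapsto r^{m-1}s$ in the time integral contributes the factor $\lambda^{-1}$.
\end{enumerate}
Together these give $\mathcal T\circ S_r=S_r\circ\mathcal T$ on $\mathcal K$. Then $S_rU$ is also a fixed point of $\mathcal T$ in $\mathcal K$, and uniqueness forces $S_rU=U$ for every $r>0$.
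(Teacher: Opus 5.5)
Your contraction step is the same as the paper's. Splitting $\Qp(U,U)-\Qp(V,V)=\Qp(U-V,U)+\Qp(V,U-V)$ and using $|\Qp(F,G)|\le\Qp(|F|,|G|)$ makes explicit a point the paper skips when it asserts the Lipschitz bound. Your self-similarity argument takes a different route. You commute $S_r$ with $\mathcal T$ and invoke uniqueness of the fixed point. The paper instead uses the increasing Picard iterates $U^{(0)}=f^{\mathrm{fr}}$, $U^{(j+1)}=f^{\mathrm{fr}}+\mathcal B(U^{(j)},U^{(j)})$. Each iterate is invariant pointwise, because your items (ii) and (iii) hold pointwise, and the paper passes to their monotone pointwise limit. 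Your route is more abstract and does not rely on monotonicity.

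There is a gap at the level of representatives. Uniqueness in $Y_m$ only gives $S_rU=U$ almost everywhere, and the exceptional null set depends on $r$. Since there are uncountably many $r$, this does not produce one function with $U=S_rU$ for all $r>0$ at once. That stronger property is what ``exactly invariant'' means here, and it is used in that form. Passing to profile variables takes $r=t^{-1/(m-1)}$, and Lemma~\ref{lem:rescaled-high-flatness} rescales by the point-dependent factor $r=|v|^{-1}$.

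Similarly, you obtain \eqref{eq:gain-envelope} and \eqref{eq:gain-only-mild} only almost everywhere. Most of the paper's proof is devoted to a representative for which the envelope holds for every $t>0$ and every $(y,v)\ne(0,0)$, and the identity holds along every nonzero characteristic. Lemma~\ref{lem:inner-flatness-main} relies on this, since it takes ordinary suprema of that representative along lines.

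The fix uses tools you already have. Induct with the pointwise form of the Carleman bound to get $0\le U^{(j)\#}\le 2C_{m,\phi}\eps\Phi_m$ at every point off the origin. Note that the iterates increase, and apply monotone convergence. The limit satisfies the characteristic identity along every nonzero characteristic, is invariant for every $r$, and agrees almost everywhere with your fixed point. Alternatively, a Fubini argument in $(r,t,x,v)$ can select an invariant representative, but you would then have to recheck the pointwise envelope and identity for it.
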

\begin{proof}
Proposition~\ref{prop:critical-gain} gives
\[
 \|\mathcal T(U)\|_{Y_m}
 \leq C_{m,\phi}\eps+C_{m,\mathbf{b}}\|U\|_{Y_m}^2
\]
and the corresponding Lipschitz estimate
\[
 \|\mathcal T(U)-\mathcal T(V)\|_{Y_m}
 \leq C_{m,\mathbf{b}}(\|U\|_{Y_m}+\|V\|_{Y_m})
 \|U-V\|_{Y_m}.
\]
For small $\eps$, this map is a contraction on the displayed ball and
preserves nonnegativity. To choose a pointwise representative of its
fixed point, consider the increasing Picard iterates
\[
 U^{(0)}=f^{\mathrm{fr}},\qquad
 U^{(j+1)}=f^{\mathrm{fr}}+\mathcal B(U^{(j)},U^{(j)}).
\]
Applying the pointwise estimate in the proof of
Proposition~\ref{prop:critical-gain} inductively, and using
the smallness of $\eps$, we obtain
\[
 0\le U^{(j)\#}(t,y,v)\le 2C_{m,\phi}\eps\Phi_m(y,v)
\]
for every $j\ge0$, every $t>0$, and every $(y,v)\ne(0,0)$.
Indeed, the bound holds for $U^{(0)}$ by \eqref{eq:trace-weight-bound}.
If it holds for $U^{(j)}$, then
\[
 U^{(j+1)\#}(t,y,v)
 \le \bigl[C_{m,\phi}\eps
 +C_{m,\mathbf{b}}(2C_{m,\phi}\eps)^2\bigr]\Phi_m(y,v)
 \le 2C_{m,\phi}\eps\Phi_m(y,v),
\]
provided $4C_{m,\mathbf{b}}C_{m,\phi}\eps\le1$.
By monotone convergence, the pointwise limit satisfies
\eqref{eq:gain-only-mild} and coincides with the fixed point $U$.
This formula gives local absolute continuity in time along every
nonzero free characteristic. Each Picard iterate satisfies
\eqref{eq:kinetic-scaling-intro}, so the pointwise limit does too,
for all $r>0$. We use this representative throughout and set
$U^\#(t,0,0)=0$; by Lemma~\ref{lem:cutoff-collision-coordinates},
this choice does not affect the collision integrals.
\end{proof}
We next estimate the velocity density, which determines the collision
frequency.
\begin{lemma}[Velocity density of the anisotropic envelope]
\label{lem:density-envelope}
For $t>0$ and $x\in\R^3$,
\begin{equation}
 \int_{\R^3}(|x-tv|^2+|v|^{2m})^{-\frac{m+2}{2m}}\,\dd v
 \leq C_mt^{-1}
 \bracket{\frac{x}{t^{m/(m-1)}}}^{-(m-1)/m}.
 \label{eq:weight-density-integral}
\end{equation}
Consequently,
\begin{equation}
 \rho_U(t,x)\leq A_{\rho,m}\eps t^{-1}
 \bracket{\frac{x}{t^{m/(m-1)}}}^{-(m-1)/m}.
 \label{eq:density-envelope}
\end{equation}
\end{lemma}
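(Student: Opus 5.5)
We reduce the estimate to $t=1$ by scaling, and then bound the $v$-integral directly. Write $I(t,x)$ for the left-hand side of \eqref{eq:weight-density-integral}.

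\textbf{Reduction to $t=1$.} Substitute $v=t^{1/(m-1)}V$ and $x=t^{m/(m-1)}X$. Then
\[
 |x-tv|^2=t^{2m/(m-1)}|X-V|^2,\qquad |v|^{2m}=t^{2m/(m-1)}|V|^{2m}.
\]
Hence the integrand equals $t^{-(m+2)/(m-1)}(|X-V|^2+|V|^{2m})^{-(m+2)/(2m)}$, while $\dd v=t^{3/(m-1)}\dd V$. The total power of $t$ is $t^{(1-m)/(m-1)}=t^{-1}$, so
\[
 I(t,x)=t^{-1}I(1,X).
\]
It therefore suffices to prove $I(1,X)\le C_m\bracket X^{-(m-1)/m}$.

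\textbf{Bounded $X$, say $|X|\le 2$.} We need $I(1,X)\le C_m$ uniformly. For $|V|\ge 1$, drop the spatial term. The integrand is then at most $|V|^{-(m+2)}$, which is integrable at infinity because $m+2>3$.

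For $|V|\le1$, drop the velocity term instead. The integrand is at most $|X-V|^{-(m+2)/m}$. Since $(m+2)/m=1+2/m<3$, this is locally integrable, uniformly in $X$.

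\textbf{Large $X$, $|X|\ge2$.} Split $\R^3$ into two regions.

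On $|V-X|\le |X|/2$, we have $|V|\ge|X|/2$, so $|V|^{2m}\gtrsim|X|^{2m}$. Use the full weight $(|X-V|^2+c|X|^{2m})^{-(m+2)/(2m)}$ in the variable $W=X-V$. Inside the ball $|W|\le|X|^m$ the integrand is at most $|X|^{-(m+2)}$, and this ball has volume $|X|^{3m}$. Outside it the integrand is at most $|W|^{-(m+2)/m}$, which is integrable over $|W|\le|X|/2$. Since $m>1$ we have $|X|\le|X|^m$, so the outer piece of the $W$-region is actually empty; this needs a check. The region contributes at most
\[
 |X|^{-(m+2)}\cdot|X|^3=|X|^{1-m},
\]
which is $\lesssim|X|^{-(m-1)/m}$.

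On $|V-X|\ge|X|/2$, the spatial term is at least $|X|^2/4$. Split further by the size of $|V|$ against the crossover $|V|\sim|X|^{1/m}$:
\begin{itemize}
\item For $|V|\le|X|^{1/m}$, bound the integrand by $|X|^{-(m+2)/m}$. The volume is $|X|^{3/m}$, giving $|X|^{(1-m)/m}=|X|^{-(m-1)/m}$.
\item For $|V|\ge|X|^{1/m}$, use $|V|^{-(m+2)}$. Integrating gives $|X|^{(1-m)/m}$ again.
\end{itemize}
Combining the cases yields \eqref{eq:weight-density-integral}.

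\textbf{Density bound.} Proposition~\ref{prop:gain-only} gives
\[
 U(t,x,v)=U^\#(t,x-tv,v)\le A_{0,m}\eps\,\Phi_m(x-tv,v).
\]
Integrating in $v$ and applying \eqref{eq:weight-density-integral} gives \eqref{eq:density-envelope} with $A_{\rho,m}=C_mA_{0,m}$.

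The main obstacle is the region near $V\approx X$, where the spatial weight degenerates and the decay must come entirely from $|V|^{2m}$. The scaling in the first step reduces all the bookkeeping to checking exponents, and the exponent $-(m-1)/m$ is dictated by the region $|V|\lesssim|X|^{1/m}$.
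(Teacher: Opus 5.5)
Your argument is correct and is essentially the paper's proof: rescale to $t=1$, handle bounded $X$ by dropping one term of the weight on either side of $|V|\sim1$, and for large $X$ split at $|V|\sim|X|^{1/m}$. There the inner piece contributes volume $|X|^{3/m}$ times $|X|^{-(m+2)/m}$, and the tail contributes $\int_{|V|\gtrsim|X|^{1/m}}|V|^{-(m+2)}\,\dd V\sim|X|^{-(m-1)/m}$. Your extra region $|V-X|\le|X|/2$ is correct but unnecessary: the tail bound uses only the velocity term, and the paper simply notes that $|V|\le2|X|^{1/m}$ already forces $|X-V|\ge|X|/2$ for large $|X|$. So the near-diagonal region is not a real obstacle, and the aside about a ball of volume $|X|^{3m}$ can be replaced by the direct bound $|X|^{-(m+2)}\cdot|X|^3$.
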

\begin{proof}
Set $v=t^{1/(m-1)}w$ and $X=x/t^{m/(m-1)}$. The left-hand side of
\eqref{eq:weight-density-integral} equals
\[
 t^{-1}J_m(X),
 \qquad
 J_m(X):=\int_{\R^3}
 (|X-w|^2+|w|^{2m})^{-\frac{m+2}{2m}}\,\dd w.
\]
Let $R_m:=\max\{8,4^{m/(m-1)}\}$. For $|X|\le R_m$, we have
\[
\begin{aligned}
 J_m(X)
 &\le \int_{|w|\le2R_m}|X-w|^{-(m+2)/m}\,\dd w
       +\int_{|w|>2R_m}|w|^{-(m+2)}\,\dd w\\
 &\le \int_{|z|\le3R_m}|z|^{-(m+2)/m}\,\dd z
       +4\pi\int_{2R_m}^{\infty}r^{-m}\,\dd r
 \le C_m.
\end{aligned}
\]
Both radial integrals are finite because $m>1$.
For $R=|X|\ge R_m$, we use
\[
 |w|\le2R^{1/m}
 \quad\Longrightarrow\quad
 |X-w|\ge R-2R^{1/m}\ge R/2
\]
to obtain
\[
\begin{aligned}
 J_m(X)
 &\le \int_{|w|\le2R^{1/m}}|X-w|^{-(m+2)/m}\,\dd w
       +\int_{|w|>2R^{1/m}}|w|^{-(m+2)}\,\dd w\\
 &\le (R/2)^{-(m+2)/m}\frac{4\pi}{3}(2R^{1/m})^3
       +4\pi\int_{2R^{1/m}}^{\infty}r^{-m}\,\dd r\\
 &\le C_mR^{-(m-1)/m}.
\end{aligned}
\]
Thus $J_m(X)\le C_m\bracket{X}^{-(m-1)/m}$, which proves
\eqref{eq:weight-density-integral}. The bound
\eqref{eq:density-envelope} follows from \eqref{eq:gain-envelope}.
\end{proof}

\begin{lemma}[Density along a nonzero-velocity line]
\label{lem:line-bound-main}
Let $a>0$, $z\in\R^3$, $|q|\geq a$, and $t>0$. Then
\begin{equation}
 \int_0^t \rho_U(s,z+sq)\,\dd s
 \leq
 C_mA_{\rho,m}\eps\,
 a^{-(m-1)/m}t^{1/m}.
 \label{eq:line-bound-main}
\end{equation}
\end{lemma}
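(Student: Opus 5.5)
We start from the density envelope \eqref{eq:density-envelope} of Lemma~\ref{lem:density-envelope}:
\[
 \rho_U(s,z+sq)\le A_{\rho,m}\eps\, s^{-1}\bracket{(z+sq)s^{-m/(m-1)}}^{-(m-1)/m}.
\]
The plan is to bound the right-hand side by a quantity whose time integral gives the power $t^{1/m}$. We do this by using the lower bound $|q|\ge a$ on the drift of the point $z+sq$. The factor $s^{-1}$ alone is not integrable at $s=0$. The gain must therefore come from the bracket whenever $|z+sq|\gtrsim s^{m/(m-1)}$.

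To make this quantitative, we drop the $1$ inside the bracket and use $\bracket{X}^{-(m-1)/m}\le|X|^{-(m-1)/m}$. This reduces the integrand to
\[
 A_{\rho,m}\eps\, s^{-1}\,s^{\frac{m}{m-1}\cdot\frac{m-1}{m}}\,|z+sq|^{-(m-1)/m}
 =A_{\rho,m}\eps\,|z+sq|^{-(m-1)/m}.
\]
The factor $s^{-1}$ cancels exactly against the scaling of the bracket. It then remains to show
\[
 \int_0^t|z+sq|^{-(m-1)/m}\,\dd s\le C_m a^{-(m-1)/m}t^{1/m},\qquad |q|\ge a.
\]

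This is a one-dimensional line integral. We write $z+sq=|q|\,(s-s_0)\widehat q+z_\perp$, where $z_\perp$ is the component of $z$ orthogonal to $q$ and $s_0=-z\cdot q/|q|^2$. This gives $|z+sq|\ge|q|\,|s-s_0|\ge a|s-s_0|$. Hence
\[
 \int_0^t|z+sq|^{-(m-1)/m}\dd s\le a^{-(m-1)/m}\int_0^t|s-s_0|^{-(m-1)/m}\dd s.
\]
The exponent $(m-1)/m$ lies in $(0,1)$, so the singularity is integrable. The integral of $|s-s_0|^{-\gamma}$ over an interval of length $t$ is maximized when $s_0$ is the midpoint. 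It is therefore at most $2\int_0^{t/2}u^{-\gamma}\dd u=C_m t^{1-\gamma}=C_mt^{1/m}$. This proves \eqref{eq:line-bound-main}.

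The only delicate point is the cancellation in the first reduction. Dropping the $1$ in the bracket must not lose anything essential, and it does not: the resulting bound is uniform and does not depend on the ratio $|z+sq|/s^{m/(m-1)}$. No case split between the regimes $|X|\le1$ and $|X|\ge1$ is needed.
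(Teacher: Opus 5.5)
Your proof is correct and follows the paper's argument: drop the $1$ in the bracket to get $\rho_U(s,x)\le A_{\rho,m}\eps|x|^{-(m-1)/m}$, then use $|z+sq|\ge|q|\,|s-s_0|\ge a|s-s_0|$ and integrate $|s-s_0|^{-(m-1)/m}$ over $[0,t]$. Your centering argument ($s_0$ at the midpoint maximizes the integral, giving $C_mt^{1/m}$) makes explicit the final step that the paper leaves implicit.
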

\begin{proof}
From \eqref{eq:density-envelope},
\[
 \rho_U(s,x)\leq A_{\rho,m}\eps |x|^{-(m-1)/m}.
\]
With $s_0=-z\cdot q/|q|^2$, orthogonal projection gives
$|z+sq|\geq|q||s-s_0|$. Since
$|s-s_0|^{-(m-1)/m}$ is locally integrable,
\[
 \int_0^t\rho_U(s,z+sq)\,\dd s
 \leq A_{\rho,m}\eps a^{-(m-1)/m}
 \int_0^t|s-s_0|^{-(m-1)/m}\,\dd s,
\]
which proves the claim.
\end{proof}

\section{Kaniel--Shinbrot brackets}
\label{sec:KS}
We use the solution $U$ of the equation without the loss term to start
the Kaniel--Shinbrot iteration. This gives nonnegative lower approximations
$f_n^{\mathrm{lo}}$ that increase and upper approximations
$f_n^{\mathrm{up}}$ that decrease. We will prove that their limits agree
and solve the full Boltzmann equation.

For a nonnegative function $g$ with finite velocity density, recall
$\nu_g=\beta_{\mathbf{b}}\rho_g$.  Starting with
\[
 f_0^{\mathrm{lo}}=0,
 \qquad f_0^{\mathrm{up}}=U,
\]
define $f_{n+1}^{\mathrm{lo}},f_{n+1}^{\mathrm{up}}$ successively by the
characteristic formulas for
\begin{align}
 (\partial_t+v\cdot\nabla_x)f_{n+1}^{\mathrm{lo}}
 +\nu_{f_n^{\mathrm{up}}}f_{n+1}^{\mathrm{lo}}
 &=\Qp(f_n^{\mathrm{lo}},f_n^{\mathrm{lo}}),
 \label{eq:lower-iterate}\\
 (\partial_t+v\cdot\nabla_x)f_{n+1}^{\mathrm{up}}
 +\nu_{f_n^{\mathrm{lo}}}f_{n+1}^{\mathrm{up}}
 &=\Qp(f_n^{\mathrm{up}},f_n^{\mathrm{up}}),
 \label{eq:upper-iterate}
\end{align}
with initial trace $f_0$.
\begin{proposition}[Monotone bracket]
\label{prop:KS-bracket}
Assume that $\eps>0$ is small enough for
Proposition~\ref{prop:gain-only} to apply, and let
$U=U_\eps$ be the corresponding solution of the equation without the
loss term. Thus
\[
 \|U_\eps\|_{Y_m}\leq 2C_{m,\phi}\eps.
\]
Let $(f_n^{\mathrm{lo}},f_n^{\mathrm{up}})$ be the Kaniel--Shinbrot
iterates defined by
\eqref{eq:lower-iterate}--\eqref{eq:upper-iterate}.  Then
\begin{equation}
 0=f_0^{\mathrm{lo}}\leq f_1^{\mathrm{lo}}\leq\cdots
 \leq f_n^{\mathrm{lo}}\leq f_n^{\mathrm{up}}
 \leq\cdots\leq f_1^{\mathrm{up}}\leq f_0^{\mathrm{up}}=U.
 \label{eq:monotone-bracket}
\end{equation}
Consequently, the pointwise limits
\[
 f^{\mathrm{lo}}:=\lim_{n\to\infty}f_n^{\mathrm{lo}},
 \qquad
 f^{\mathrm{up}}:=\lim_{n\to\infty}f_n^{\mathrm{up}}
\]
exist and satisfy
\begin{equation}
 \begin{split}
  (\partial_t+v\cdot\nabla_x)f^{\mathrm{lo}}
  +\nu_{f^{\mathrm{up}}}f^{\mathrm{lo}}
  &=\Qp(f^{\mathrm{lo}},f^{\mathrm{lo}}),\\
  (\partial_t+v\cdot\nabla_x)f^{\mathrm{up}}
  +\nu_{f^{\mathrm{lo}}}f^{\mathrm{up}}
  &=\Qp(f^{\mathrm{up}},f^{\mathrm{up}}),
 \end{split}
 \qquad
 0\leq f^{\mathrm{lo}}\leq f^{\mathrm{up}}\leq U,
 \label{eq:KS-coupled-limits-main}
\end{equation}
in the characteristic mild sense. Moreover, both $f^{\mathrm{lo}}$ and
$f^{\mathrm{up}}$ are exactly invariant under the kinetic scaling
\eqref{eq:kinetic-scaling-intro}. Here $f_0$, $U$,
$f_n^{\mathrm{lo}}$, $f_n^{\mathrm{up}}$, $f^{\mathrm{lo}}$, and
$f^{\mathrm{up}}$ all depend on $\eps$; this dependence is suppressed in
the notation.
\end{proposition}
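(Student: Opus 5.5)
The proof is the standard Kaniel--Shinbrot induction, with $U$ serving as the beginning condition in place of a supersolution inequality. For each $n$, the iterates are defined along characteristics by
\[
 f_{n+1}^{\mathrm{lo}\#}(t,y,v)=e^{-A_{f_n^{\mathrm{up}}}(0,t)}f_0
 +\int_0^te^{-A_{f_n^{\mathrm{up}}}(s,t)}\Qp(f_n^{\mathrm{lo}},f_n^{\mathrm{lo}})(s,y+sv,v)\,\dd s,
\]
and similarly for $f_{n+1}^{\mathrm{up}}$, with the roles of lo and up exchanged in the exponent. These formulas are well defined because all quantities are nonnegative, and because $\nu_g\le\beta_{\mathbf b}\rho_U$ is finite almost everywhere by Lemma~\ref{lem:density-envelope} whenever $0\le g\le U$. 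Nonnegativity is immediate from the formulas.

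The core is an induction on $n$ of the statement
\[
 f_n^{\mathrm{lo}}\le f_{n+1}^{\mathrm{lo}}\le f_{n+1}^{\mathrm{up}}\le f_n^{\mathrm{up}}\le U.
\]
I would use two elementary monotonicity facts. First, $g\le h$ implies $\Qp(g,g)\le\Qp(h,h)$. Second, $g\le h$ implies $e^{-A_h}\le e^{-A_g}$.

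For the base case, I compare with the $U$-equation. Since $f_1^{\mathrm{up}}$ uses gain $\Qp(U,U)$ and damping $\nu_0=0$, its mild formula is exactly \eqref{eq:gain-only-mild}, so $f_1^{\mathrm{up}}=U$. Also $f_1^{\mathrm{lo}}=e^{-A_U}f_0\ge0=f_0^{\mathrm{lo}}$, and $f_1^{\mathrm{lo}}\le f_0\le U^\#$.

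For the induction step, assume $f_{n-1}^{\mathrm{lo}}\le f_n^{\mathrm{lo}}\le f_n^{\mathrm{up}}\le f_{n-1}^{\mathrm{up}}$.
\begin{itemize}
\item To show $f_{n+1}^{\mathrm{lo}}\ge f_n^{\mathrm{lo}}$: the gain term is larger, since $\Qp(f_n^{\mathrm{lo}},f_n^{\mathrm{lo}})\ge\Qp(f_{n-1}^{\mathrm{lo}},f_{n-1}^{\mathrm{lo}})$. The damping is smaller, since $\nu_{f_n^{\mathrm{up}}}\le\nu_{f_{n-1}^{\mathrm{up}}}$. Both effects enlarge every term in the formula.
\item The inequalities $f_{n+1}^{\mathrm{up}}\le f_n^{\mathrm{up}}$ and $f_{n+1}^{\mathrm{lo}}\le f_{n+1}^{\mathrm{up}}$ follow in the same way. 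The second uses that $f_{n+1}^{\mathrm{lo}}$ has the smaller gain, built from $f_n^{\mathrm{lo}}$, and the larger damping, built from $f_n^{\mathrm{up}}$.
\end{itemize}
All comparisons are pointwise for each fixed characteristic $(y,v)\ne(0,0)$. This avoids any need for a Gronwall argument.

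Since $f^{\mathrm{lo}}_n$ increases and $f^{\mathrm{up}}_n$ decreases within $[0,U]$, the pointwise limits exist. Passing to the limit in the mild formulas is the step I expect to need care.
\begin{itemize}
\item For the lower sequence, $\rho_{f_n^{\mathrm{up}}}\downarrow\rho_{f^{\mathrm{up}}}$ by dominated convergence with dominant $\rho_U<\infty$, and $A$ converges by dominated convergence with dominant $A_U$. The bound $A_U<\infty$ requires integrability of $\rho_U$ along characteristics for $s>0$. At $s\to0$, note that $\rho_U\lesssim\eps/s$ is not integrable, so $A_U(0,t)$ may be infinite. When $v\neq0$, Lemma~\ref{lem:line-bound-main} gives finiteness. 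The gain term converges by monotone convergence.
\item For the upper sequence, the gains decrease and are dominated by $\Qp(U,U)$. This is integrable along characteristics because $\mathcal B(U,U)^\#\le U^\#<\infty$, so dominated convergence applies. The damping $\rho_{f_n^{\mathrm{lo}}}$ increases, and monotone convergence handles it.
\end{itemize}
This yields \eqref{eq:KS-coupled-limits-main} in mild form for almost every characteristic.

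Finally, scaling invariance follows by induction. $U$ and $f_0$ are invariant, and the dilation commutes with $\Qp$, with $\nu$, and with the characteristic formulas; this is the computation in the introduction, applied to the linear equations with frozen coefficients. Hence every iterate is invariant, and so are the limits.
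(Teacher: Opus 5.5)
Your proposal is correct and follows essentially the same route as the paper. The paper phrases your pointwise comparison of the characteristic formulas as a monotonicity property of the attenuated solution operator $\mathcal L[a,h]$. The remaining steps match yours: the fixed-point identity gives $f_1^{\mathrm{up}}=U$, and the limits pass by monotone and dominated convergence, using that $\nu_U$ and $\Qp(U,U)$ are integrable along characteristics. Scaling invariance is then proved by induction through uniqueness of the linear characteristic formula.

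One small correction: your aside that $A_U(0,t)$ ``may be infinite'' is unnecessary. Lemma~\ref{lem:line-bound-main} makes it finite when $v\neq0$, and the density envelope with $y\neq0$ does so when $v=0$.
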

\begin{proof}
Fix such an $\eps$ and recall
$f^{\mathrm{fr}}(t,x,v)=f_0(x-tv,v)$, so that
$(f^{\mathrm{fr}})^\#=f_0$.
We use the following comparison property. Let $a,h\ge0$, with $a$ having
finite velocity density almost everywhere. Assume that, for almost every
$(y,v)$ and every $0<T<\infty$,
\[
 \int_0^T\nu_a(s,y+sv)\,\dd s<\infty,
 \qquad
 \int_0^T h(s,y+sv,v)\,\dd s<\infty.
\]
Denote by $\mathcal L[a,h]$ the unique solution along almost every
characteristic of
\[
 (\partial_t+v\cdot\nabla_x)F+\nu_aF=h,
 \qquad F|_{t=0}=f_0.
\]
Its characteristic formula is
\[
 \begin{split}
 \mathcal L[a,h]^\#(t,y,v)
 ={}&
 \exp\left(-\int_0^t\nu_a(s,y+sv)\,\dd s\right)f_0(y,v)\\
 &+\int_0^t
 \exp\left(-\int_s^t\nu_a(\tau,y+\tau v)\,\dd\tau\right)
 h(s,y+sv,v)\,\dd s.
 \end{split}
\]
It follows directly from this formula that
\begin{equation}
 a_1\leq a_2,\qquad h_1\leq h_2
 \quad\Longrightarrow\quad
 \mathcal L[a_2,h_1]\leq\mathcal L[a_1,h_2].
 \label{eq:attenuated-comparison}
\end{equation}
In words, increasing the collision frequency decreases the solution, whereas
increasing the source increases the solution.
We verify these integrability assumptions before applying the comparison.
By
Lemma~\ref{lem:line-bound-main} with $z=y$, $q=v$, $a=|v|$, 
\[ 
\int_0^T\nu_U(s,y+sv)\,\dd s<\infty,
\] 
whenever $v\neq0$. If $v=0$, then $y\neq0$, and the same conclusion
follows directly from \eqref{eq:density-envelope}.
The identity
\eqref{eq:gain-only-mild} also gives
\[
 \int_0^T\Qp(U,U)(s,y+sv,v)\,\dd s
 =U^\#(T,y,v)-f_0(y,v)<\infty.
\]
Thus the assumptions hold whenever $0\le a\le U$ and
$0\le h\le\Qp(U,U)$.
The iteration can therefore be constructed inductively as
\[
 f_{n+1}^{\mathrm{lo}}
 =\mathcal L[f_n^{\mathrm{up}},
             \Qp(f_n^{\mathrm{lo}},f_n^{\mathrm{lo}})],
 \qquad
 f_{n+1}^{\mathrm{up}}
 =\mathcal L[f_n^{\mathrm{lo}},
             \Qp(f_n^{\mathrm{up}},f_n^{\mathrm{up}})].
\]
For the first upper iterate, since $f_0^{\mathrm{lo}}=0$ and hence
$\nu_{f_0^{\mathrm{lo}}}=0$, the fixed-point identity
\eqref{eq:gain-only-mild} gives
\[
 f_1^{\mathrm{up}}
 =f^{\mathrm{fr}}+\mathcal B(U,U)
 =U
 =f_0^{\mathrm{up}}.
\]
On the other hand,
\[
 f_1^{\mathrm{lo}}=\mathcal L[U,0],
\]
so that
\[
 0=f_0^{\mathrm{lo}}\leq f_1^{\mathrm{lo}}
 \leq f^{\mathrm{fr}}\leq U=f_1^{\mathrm{up}}.
\]
This proves the first step of the bracket.
Suppose now that, for some $n\geq1$,
\[
 f_{n-1}^{\mathrm{lo}}\leq f_n^{\mathrm{lo}}
 \leq f_n^{\mathrm{up}}\leq f_{n-1}^{\mathrm{up}}.
\]
Since $\Qp$ is order preserving in each argument,
\[
 \Qp(f_{n-1}^{\mathrm{lo}},f_{n-1}^{\mathrm{lo}})
 \leq\Qp(f_n^{\mathrm{lo}},f_n^{\mathrm{lo}})
 \leq\Qp(f_n^{\mathrm{up}},f_n^{\mathrm{up}})
 \leq\Qp(f_{n-1}^{\mathrm{up}},f_{n-1}^{\mathrm{up}}).
\]
Combining these inequalities with
\eqref{eq:attenuated-comparison} gives
\[
 f_n^{\mathrm{lo}}\leq f_{n+1}^{\mathrm{lo}},
 \qquad
 f_{n+1}^{\mathrm{lo}}\leq f_{n+1}^{\mathrm{up}},
 \qquad
 f_{n+1}^{\mathrm{up}}\leq f_n^{\mathrm{up}}.
\]
Induction therefore proves \eqref{eq:monotone-bracket}.
The monotone sequences are bounded by $U$, so their pointwise limits
$f^{\mathrm{lo}}$ and $f^{\mathrm{up}}$ exist and satisfy
\[
 0\leq f^{\mathrm{lo}}\leq f^{\mathrm{up}}\leq U.
\]
The collision frequencies $\nu_{f_n^{\mathrm{lo}}}$ and
$\nu_{f_n^{\mathrm{up}}}$ are bounded by $\nu_U$, which is integrable
along each nonzero characteristic, as shown above.
Moreover,
\[
 \Qp(f_n^{\mathrm{lo}},f_n^{\mathrm{lo}})
 \longrightarrow\Qp(f^{\mathrm{lo}},f^{\mathrm{lo}}),
 \qquad
 \Qp(f_n^{\mathrm{up}},f_n^{\mathrm{up}})
 \longrightarrow\Qp(f^{\mathrm{up}},f^{\mathrm{up}}),
\]
where the second convergence is dominated by $\Qp(U,U)$, whose time
integral along each nonzero characteristic is finite.
Monotone convergence for the lower source and dominated convergence for the
upper source and both exponential loss factors now permit passage to the limit in
the characteristic formulas.  This yields
\eqref{eq:KS-coupled-limits-main}.
It remains to verify the scaling.  Let
\[
 (\mathcal D_r^{(m)}h)(t,x,v)
 :=r^{m+2}h(r^{m-1}t,r^mx,rv),
 \qquad r>0.
\]
The homogeneity of $f_0$ implies
$\mathcal D_r^{(m)}f^{\mathrm{fr}}=f^{\mathrm{fr}}$, while
Proposition~\ref{prop:gain-only} gives $\mathcal D_r^{(m)}U=U$. Thus the initial
approximations $f_0^{\mathrm{lo}}=0$ and $f_0^{\mathrm{up}}=U$ are invariant
under this scaling. Assume that $f_n^{\mathrm{lo}}$ and $f_n^{\mathrm{up}}$ are
scale invariant. The scaling of the collision operator shows that
$\mathcal D_r^{(m)}f_{n+1}^{\mathrm{lo}}$ and
$f_{n+1}^{\mathrm{lo}}$ solve the same linear equation, with the same
collision frequency, source, and initial trace.
Uniqueness of the characteristic formula gives
\[
 \mathcal D_r^{(m)}f_{n+1}^{\mathrm{lo}}=f_{n+1}^{\mathrm{lo}}.
\]
The same argument gives
$\mathcal D_r^{(m)}f_{n+1}^{\mathrm{up}}=f_{n+1}^{\mathrm{up}}$.
Induction and
passage to the pointwise limits show that
\[
 \mathcal D_r^{(m)}f^{\mathrm{lo}}=f^{\mathrm{lo}},
 \qquad
 \mathcal D_r^{(m)}f^{\mathrm{up}}=f^{\mathrm{up}}
\]
for every $r>0$.
The smallness of $\eps$ is used here only to construct and control the
global solution $U_\eps$ of the equation without the loss term in
Proposition~\ref{prop:gain-only}; once $U_\eps$ is available, the monotone
bracket argument itself requires no additional smallness.
\end{proof}
\paragraph{Characteristic representatives.}
Away from $(y,v)=(0,0)$, we define the iterates by their characteristic
formulas and the limits by pointwise convergence. The integrable bounds
for the collision frequency and gain term allow us to pass to the limit
in these formulas. The limits are therefore absolutely continuous in
time along each such characteristic. They also satisfy the scaling for
all $r>0$ simultaneously, since every iterate does.
We use these representatives when evaluating at a fixed time. The
equation is understood almost everywhere, and the values on
$(y,v)=(0,0)$ do not affect the essential suprema below. With $A_h$ as in
\eqref{eq:mild-definition}, the limit formulas are, almost everywhere,
\begin{align}
 (f^{\mathrm{lo}})^\#(t,y,v)
 &=e^{-A_{f^{\mathrm{up}}}(0,t;y,v)}f_0(y,v)
 +\int_0^t e^{-A_{f^{\mathrm{up}}}(s,t;y,v)}
 \Qp(f^{\mathrm{lo}},f^{\mathrm{lo}})(s,y+sv,v)\,\dd s,
 \label{eq:lower-limit-formula}\\
 (f^{\mathrm{up}})^\#(t,y,v)
 &=e^{-A_{f^{\mathrm{lo}}}(0,t;y,v)}f_0(y,v)
 +\int_0^t e^{-A_{f^{\mathrm{lo}}}(s,t;y,v)}
 \Qp(f^{\mathrm{up}},f^{\mathrm{up}})(s,y+sv,v)\,\dd s.
 \label{eq:upper-limit-formula}
\end{align}
\section{Short-time estimates for the bracket difference}
\label{sec:gap-flatness}
We estimate the difference between the upper and lower limits constructed
in Section~\ref{sec:KS}. On a fixed annulus in phase space, this difference
tends to zero faster than any prescribed power of time as $t\downarrow0$.

Let
\[
 w:=f^{\mathrm{up}}-f^{\mathrm{lo}},
 \qquad
 \mathcal A_m:=\{(y,v):1/2\leq\varrho_m(y,v)\leq2\}.
\]
More precisely, we prove that, for every integer $N\ge1$,
\begin{equation}
 \esssup_{(y,v)\in\mathcal A_m}w^\#(t,y,v)\leq C_{m,N}t^N
 \qquad(0<t\leq t_{m,N}).
 \label{eq:target-gap-flatness}
\end{equation}
\subsection{Line and high-velocity estimates}
We first integrate out one incoming velocity. Energy conservation then
allows us to estimate the gain term when the output velocity is large.

\begin{lemma}[Angular integration identities]
\label{lem:angular-integration}
Under \eqref{eq:strong-cutoff-intro}, set
\[
 K_\pm:=2\pi\int_{-1}^1\mathbf{b}(z)
 \left(\frac{1\pm z}{2}\right)^{-3/2}\,\dd z.
\]
Then $K_++K_-=2^{5/2}\pi\Lambda_{\mathbf{b}}<\infty$.
For every nonnegative $h\in L^1(\R^3)$ and every $v\in\R^3$,
\begin{align}
 \int_{\R^3}\int_{\Sph^2}
 \mathbf{b}(\widehat q\cdot\sigma)h(v-a)\,\dd\sigma\dd q
 &=K_-\int_{\R^3}h(w)\,\dd w,
 \label{eq:angular-integration-minus}\\
 \int_{\R^3}\int_{\Sph^2}
 \mathbf{b}(\widehat q\cdot\sigma)h(v-c)\,\dd\sigma\dd q
 &=K_+\int_{\R^3}h(w)\,\dd w,
 \label{eq:angular-integration-plus}
\end{align}
where $a=(q-|q|\sigma)/2$ and $c=(q+|q|\sigma)/2$.
\end{lemma}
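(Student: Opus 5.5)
The plan is to prove \eqref{eq:angular-integration-minus} by fixing $\sigma$, changing variables $q\mapsto a$ with Lemma~\ref{lem:cutoff-collision-coordinates}, and only then integrating over $\sigma$. By Tonelli (everything is nonnegative) I will write the left-hand side as $\int_{\Sph^2}\int_{\R^3}\mathbf b(\widehat q\cdot\sigma)h(v-a)\,\dd q\,\dd\sigma$. For fixed $\sigma$, the map $q\mapsto a=(q-|q|\sigma)/2$ is injective off the null ray $\widehat q=\sigma$, and its Jacobian is $\tfrac18(1-\widehat q\cdot\sigma)$ by \eqref{eq:collision-map-jacobians}. Hence $\dd q=8(1-z)^{-1}\dd a$ with $z=\widehat q\cdot\sigma$. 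To substitute, $z$ must be written in terms of $a$. From $|a|^2=\tfrac{|q|^2}{2}(1-z)$ and $a\cdot\sigma=\tfrac{|q|}{2}(z-1)$ we get
\[
 \widehat a\cdot\sigma=-\Bigl(\frac{1-z}{2}\Bigr)^{1/2}, \qquad\text{so}\qquad z=1-2(\widehat a\cdot\sigma)^2 .
\]
The image of the map is the half-space $\{a\cdot\sigma<0\}$, which is consistent with the inverse formula in Lemma~\ref{lem:cutoff-collision-coordinates}.

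Next I swap the integrals again and write $u=\widehat a\cdot\sigma$. This gives
\[
 \int_{\R^3}h(v-a)\int_{\{u<0\}}\mathbf b(1-2u^2)\,\frac{4}{u^2}\,\dd\sigma\,\dd a .
\]
For fixed $a\ne0$, the sphere measure pushes forward to $2\pi\,\dd u$ on $(-1,1)$. The inner integral is therefore $8\pi\int_{-1}^0\mathbf b(1-2u^2)u^{-2}\,\dd u$. I substitute $z=1-2u^2$, so $|u|=((1-z)/2)^{1/2}$ and $\dd u=\dd z/(4|u|)$. The inner integral becomes $2\pi\int_{-1}^1\mathbf b(z)|u|^{-3}\,\dd z$, which equals $K_-$ and is independent of $a$. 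The translation $w=v-a$ then yields $K_-\int h$.

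For \eqref{eq:angular-integration-plus} I will repeat the argument with $c=(q+|q|\sigma)/2$. Here the Jacobian is $\tfrac18(1+z)$ and $c\cdot\sigma=\tfrac{|q|}{2}(1+z)$. This gives $\widehat c\cdot\sigma=((1+z)/2)^{1/2}$ with image $\{c\cdot\sigma>0\}$, and the same substitution produces $K_+$. Finally, $K_++K_-=2\pi\cdot2^{3/2}\Lambda_{\mathbf b}=2^{5/2}\pi\Lambda_{\mathbf b}$ follows directly from the definition of $\Lambda_{\mathbf b}$ in \eqref{eq:strong-cutoff-intro}.

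I do not expect a genuine obstacle. The only delicate point is bookkeeping in the change of variables. One must check that, for fixed $\sigma$, the map is a bijection between $\R^3$ minus a null set and the open half-space, so that the Jacobian formula applies without multiplicity. One must also keep all integrands nonnegative so that Tonelli justifies each interchange, even where $K_\pm$ would be infinite without \eqref{eq:strong-cutoff-intro}.
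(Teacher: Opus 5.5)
Your proof is correct, but it takes a different route from the paper's. The paper never uses the Jacobian of the collision maps. It writes $q=r\omega$ and $\sigma=z\omega+\sqrt{1-z^2}\,\eta$ with $\eta\perp\omega$, so that $\dd\sigma=\dd z\,\dd\mathcal H^1(\eta)$. At fixed $z$ the measure $\dd\omega\,\dd\mathcal H^1(\eta)$, of total mass $8\pi^2$, is invariant under simultaneous rotations, so $\widehat a$ and $\widehat c$ are uniformly distributed on $\Sph^2$. Since $|a|=r((1-z)/2)^{1/2}$ and $|c|=r((1+z)/2)^{1/2}$, one radial substitution in $r^2\,\dd r$ produces the full weight $((1\mp z)/2)^{-3/2}$, and the two identities are handled together by exchanging $d_-$ and $d_+$. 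You instead fix $\sigma$ and use the Jacobian $\tfrac18(1\mp z)$ and the inverse formulas of Lemma~\ref{lem:cutoff-collision-coordinates}. You then integrate over $\sigma$ at fixed $a$ (or $c$) by Archimedes' pushforward $\dd\sigma\mapsto 2\pi\,\dd u$. In your computation the weight splits as $(1\mp z)^{-1}$ from the Jacobian times $(1\mp z)^{-1/2}$ from the substitution $u\mapsto z$. The paper's argument is shorter and symmetric in $a$ and $c$, and needs no bijectivity check, but it relies on a rotation-equivariance argument for the direction of the displacement. Yours is fully explicit and reuses the collision-map facts the paper already proved. The price is the bookkeeping you identify. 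For fixed $\sigma$ you must check that the map is a diffeomorphism from $\R^3$ minus a null ray onto the open half-space $\{a\cdot\sigma<0\}$ (resp.\ $\{c\cdot\sigma>0\}$). You must also express $z$ through $\widehat a\cdot\sigma=-((1-z)/2)^{1/2}$ (resp.\ $\widehat c\cdot\sigma=((1+z)/2)^{1/2}$). You carry out both steps correctly.
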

\begin{proof}
Write $q=r\omega$ and
$\sigma=z\omega+\sqrt{1-z^2}\eta$, where
$\eta\in\omega^\perp\cap\Sph^2$ and $-1<z<1$.
Then $\dd\sigma=\dd z\dd\mathcal H^1(\eta)$. Set
\[
 d_\pm(z):=\sqrt{\frac{1\pm z}{2}}.
\]
At fixed $z$, the angular measure
$\dd\omega\dd\mathcal H^1(\eta)$ is invariant under simultaneous
rotations and has total mass $8\pi^2$. Since
$|a|=rd_-(z)$ and $|c|=rd_+(z)$, the directions of both displacements
are uniformly distributed on $\Sph^2$. Hence the left-hand side of
\eqref{eq:angular-integration-minus} equals
\[
 2\pi\int_{-1}^1\mathbf{b}(z)\int_0^\infty r^2
 \int_{\Sph^2}h(v-rd_-(z)n)\,\dd n\dd r\dd z.
\]
The substitution $\rho=rd_-(z)$ gives
$K_-\int_{\R^3}h(w)\,\dd w$. Replacing $d_-$ by $d_+$ proves
\eqref{eq:angular-integration-plus}. All integrands are nonnegative,
so Tonelli's theorem justifies the changes in integration order.
\end{proof}

For $a>0$, set
\[
 \mathcal S_aF(x):=\esssup_{|q|\geq a}F(x,q).
\]
\begin{lemma}[High-velocity splitting]
\label{lem:high-velocity-main}
For nonnegative $F,G$, set $C_{\mathbf{b}}:=K_++K_-$. Then
\begin{equation}
 \mathbf1_{\{|v|\geq a\}}\Qp(F,G)(x,v)
 \leq C_{\mathbf{b}}\bigl(
 \rho_G(x)\mathcal S_{a/\sqrt2}F(x)
 +\rho_F(x)\mathcal S_{a/\sqrt2}G(x)\bigr).
 \label{eq:high-velocity-main}
\end{equation}
In particular, before replacing the actual threshold by $a/\sqrt2$, the
same argument gives, for every nonnegative $H$,
\begin{equation}
 \Qp(H,H)(x,p)
 \leq C_{\mathbf{b}}\rho_H(x)
 \esssup_{|q|\geq |p|/\sqrt2}H(x,q).
 \label{eq:adaptive-high-velocity}
\end{equation}
\end{lemma}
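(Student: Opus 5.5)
The plan is to use energy conservation to split the collision integral according to which incoming velocity is large, and then to integrate out the other incoming velocity with Lemma~\ref{lem:angular-integration}. Write $\Qp(F,G)(x,v)=\int\int\mathbf{b}(\widehat q\cdot\sigma)F(x,v-a)G(x,v-c)\,\dd\sigma\dd q$, where $v'=v-a$ and $v_*'=v-c$. Energy conservation gives $|v'|^2+|v_*'|^2=|v|^2+|v_*|^2\ge|v|^2$. Hence, for every $(q,\sigma)$, at least one of $|v-a|\ge|v|/\sqrt2$ or $|v-c|\ge|v|/\sqrt2$ holds. We bound the integrand by the sum of the two restricted integrands,
\[
 \mathbf1_{\{|v-a|\ge|v|/\sqrt2\}}F(v-a)G(v-c)
 +\mathbf1_{\{|v-c|\ge|v|/\sqrt2\}}F(v-a)G(v-c).
\]

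In the first term, bound $F(x,v-a)$ by $\esssup_{|q'|\ge|v|/\sqrt2}F(x,q')$. What remains is $\int\int\mathbf{b}\,G(x,v-c)\,\dd\sigma\dd q$, and by \eqref{eq:angular-integration-plus} this equals $K_+\rho_G(x)$. In the second term, bound $G(x,v-c)$ by its essential supremum over $|q'|\ge|v|/\sqrt2$. The remaining integral is $K_-\rho_F(x)$ by \eqref{eq:angular-integration-minus}. This yields
\[
 \Qp(F,G)(x,v)\le C_{\mathbf b}\Bigl(\rho_G(x)\esssup_{|q'|\ge|v|/\sqrt2}F(x,q')+\rho_F(x)\esssup_{|q'|\ge|v|/\sqrt2}G(x,q')\Bigr).
\]
Taking $F=G=H$ gives \eqref{eq:adaptive-high-velocity}, up to a factor $2$ that is absorbed into $C_{\mathbf b}$ or removed as described below. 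For \eqref{eq:high-velocity-main}, note that when $|v|\ge a$ we have $\{|q'|\ge|v|/\sqrt2\}\subset\{|q'|\ge a/\sqrt2\}$. Each supremum is then at most $\mathcal S_{a/\sqrt2}$, and the estimate follows.

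To remove the factor $2$, split using the disjoint sets $\{|v-a|\ge|v|/\sqrt2\}$ and its complement. On the complement the second condition holds, so each point is counted only once. With this disjoint splitting, $K_++K_-$ suffices exactly.

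The main obstacle is measure-theoretic: the essential supremum must legitimately dominate $F(x,v-a)$ for almost every $(q,\sigma)$. For this we need the map $(q,\sigma)\mapsto v-a$ to pull null sets back to null sets. Lemma~\ref{lem:cutoff-collision-coordinates} provides this: for fixed $\sigma$, $q\mapsto a$ is injective with Jacobian $\tfrac18(1-\widehat q\cdot\sigma)$, which is nonzero off a null set. So if $F(x,\cdot)$ exceeds its essential supremum on a null set $E$, the set of $q$ with $v-a\in E$ is null for each $\sigma$, and Tonelli's theorem applies. The map $q\mapsto c$ is handled the same way. Everything is nonnegative, so Tonelli justifies all interchanges, and the bound holds for every $x$ at which $\rho_F$ and $\rho_G$ are defined.
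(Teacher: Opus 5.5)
Your proposal is correct and is essentially the paper's proof. Energy conservation puts one incoming velocity above $|v|/\sqrt2$; you bound that factor by its essential supremum and integrate out the other factor with Lemma~\ref{lem:angular-integration}, which gives $K_+\rho_G\,\esssup F+K_-\rho_F\,\esssup G$. Your null-set justification via the Jacobians in Lemma~\ref{lem:cutoff-collision-coordinates} supplies a detail the paper leaves implicit.

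One small correction concerns the factor $2$. It does not come from double counting, since the overlapping and the disjoint splittings give the same bound $K_+\rho_G\,\esssup F+K_-\rho_F\,\esssup G$. It comes only from replacing each of $K_\pm$ by $C_{\mathbf b}$ before setting $F=G=H$. If you keep $K_+$ and $K_-$ separate, you get exactly $C_{\mathbf b}\rho_H\,\esssup H$, as in the paper.
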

\begin{proof}
Energy conservation implies
$|v|^2\leq|v'|^2+|v_*'|^2$. Thus, on $\{|v|\geq a\}$, one incoming
velocity has modulus at least $a/\sqrt2$, and
\[
 \begin{aligned}
 F(x,v')G(x,v_*')
\leq \mathcal S_{a/\sqrt2}F(x)G(x,v_*')+F(x,v')\mathcal S_{a/\sqrt2}G(x).
 \end{aligned}
\]
Integrating this inequality and applying
Lemma~\ref{lem:angular-integration}, with $q=v-v_*$, gives
\[
 K_+\rho_G(x)\mathcal S_{a/\sqrt2}F(x)
 +K_-\rho_F(x)\mathcal S_{a/\sqrt2}G(x).
\]
Since $K_\pm\leq C_{\mathbf{b}}$, this proves
\eqref{eq:high-velocity-main}.
If $F=G=H$ and the threshold $|v|/\sqrt2$ is retained instead of being
weakened to $a/\sqrt2$, the same calculation proves
\eqref{eq:adaptive-high-velocity}.
\end{proof}
\subsection{The inner region}
We now estimate the solution when the initial spatial position $y$ is
close to zero.
Repeated use of the high-velocity estimate gives arbitrarily high powers
of time in this region.
\begin{lemma}[Finite collision-tree flatness]
\label{lem:inner-flatness-main}
For every integer $N\geq1$, there exist
$\delta_{m,N},t_{m,N}>0$ and $C_{m,N}<\infty$ such that
\begin{equation}
\esssup_{\substack{(y,v)\in\mathcal A_m\\ |y|\leq\delta_{m,N}}}
U^\#(t,y,v)
\leq C_{m,N}\eps^{N+1}t^{N/m},
\qquad 0<t\leq t_{m,N}.
\label{eq:inner-flatness-main}
\end{equation}
The constants may also depend on the fixed kernel $\mathbf b$ and profile
$\phi$.
\end{lemma}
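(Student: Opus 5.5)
We plan an induction on the number of collisions, using the gain-only identity \eqref{eq:gain-only-mild} together with Lemma~\ref{lem:high-velocity-main} and Lemma~\ref{lem:line-bound-main}. The basic observation is this. On $\mathcal A_m$ with $|y|\le\delta$ small, we have $|v|^{2m}\ge 1/4^m-\delta^2$, so $|v|\ge c_m>0$. By \eqref{eq:conic-support-main}, $f_0(y,v)=0$ as soon as $R_0^m\delta<c_m^m$. Hence
\[
 U^\#(t,y,v)=\int_0^t\Qp(U,U)(s,y+sv,v)\,\dd s.
\]
By \eqref{eq:adaptive-high-velocity}, the integrand is at most $C_{\mathbf b}\rho_U(s,y+sv)\,\esssup_{|q|\ge|v|/\sqrt2}U(s,y+sv,q)$.

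To iterate this, we introduce the quantities
\[
 M_k(t;\delta,a):=\esssup\{U^\#(s,z,q):0<s\le t,\ |z|\le\delta,\ |q|\ge a\}.
\]
Here we use a region in the free-foot variables, not the annulus, because the incoming velocities $q$ need not lie in $\mathcal A_m$. The point is that
\[
 U(s,y+sv,q)=U^\#(s,y+sv-sq,q),
\]
whose foot $z=y+s(v-q)$ satisfies $|z|\le\delta+s(|v|+|q|)$. This is not small when $|q|$ is large. We therefore split $|q|\ge a/\sqrt2$ into two ranges. For bounded velocities $|q|\le K$, the foot stays within $\delta+2tK$. For $|q|>K$, the envelope \eqref{eq:gain-envelope} gives
\[
 U^\#\le A_{0,m}\eps\varrho_m^{-(m+2)}\le A_{0,m}\eps K^{-(m+2)}.
\]
Choosing $K=t^{-\gamma}$ for a suitable small $\gamma>0$ makes this tail $O(\eps t^{N/m})$ once $\gamma(m+2)\ge N/m$. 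At the same time $tK=t^{1-\gamma}\to0$, so the feet remain in a slightly larger ball. Since $|q|\ge a/\sqrt2$ with $a$ fixed and positive, the support condition still kills $f_0$ at those feet.

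The recursive inequality will then read
\[
 M(t;\delta_j,a_j)\le C\eps\, a_j^{-(m-1)/m}t^{1/m}\,M(t;\delta_{j+1},a_j/\sqrt2)+C\eps^{N+1}t^{N/m},
\]
with $\delta_{j+1}=\delta_j+2t^{1-\gamma}$ and $a_{j+1}=a_j/\sqrt2$. The factor $C\eps\,a^{-(m-1)/m}t^{1/m}$ comes from Lemma~\ref{lem:line-bound-main}, applied with $z=y$ and velocity $v$, $|v|\ge a$; the time integral of $\rho_U$ produces it. Each step costs one factor $\eps t^{1/m}$. After $N$ steps we close with the crude bound $M\le A_{0,m}\eps\,\varrho_m^{-(m+2)}\le C\eps a_N^{-(m+2)}$, which is bounded because $a_N=c_m2^{-N/2}>0$. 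Multiplying out gives $C_{m,N}\eps^{N+1}t^{N/m}$. The parameters $\delta_{m,N}$ and $t_{m,N}$ are chosen so that
\[
 \delta_N:=\delta_{m,N}+2Nt_{m,N}^{1-\gamma}<R_0^{-m}a_N^m,
\]
which keeps every free-transport term zero along the finite tree. This matches the mechanism sketched in the introduction: energy conservation keeps one incoming velocity large, and the support condition removes the data term.

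The main obstacle is the bookkeeping of the spatial feet. When the esssup in \eqref{eq:adaptive-high-velocity} is transported back to free coordinates, the foot drifts by $s(v-q)$. Controlling this drift uniformly requires the velocity truncation $K=t^{-\gamma}$ together with the tail bound from \eqref{eq:gain-envelope}. One must also check that the tail term carries a sufficient power of $\eps$; if it does not, the statement's $\eps^{N+1}$ can be absorbed into $C_{m,N}$ since $\eps\le\eps_0$ is fixed. A second technical point is that the time integral in Lemma~\ref{lem:line-bound-main} runs along the line $y+sv$ with $|v|\ge a_j$, and the nested suprema are taken over $s'\le s\le t$. This is consistent because $M$ is defined with the supremum over all $s\le t$.
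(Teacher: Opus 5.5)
\paragraph{Gap: the velocity truncation limits the attainable power of $t$.}
Your skeleton matches the paper's: the conic support kills the free term, \eqref{eq:adaptive-high-velocity} and Lemma~\ref{lem:line-bound-main} give one factor $\eps t^{1/m}$ per collision, and the tree is closed with $A_{0,m}\eps a_N^{-(m+2)}$. The problem is how you control the spatial feet.

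Because your region $\{|z|\le\delta,\ |q|\ge a\}$ has a fixed foot radius, you must truncate velocities at $K=t^{-\gamma}$. The only bound for the discarded part is the envelope, which gives a tail of size $A_{0,m}\eps K^{-(m+2)}=A_{0,m}\eps t^{\gamma(m+2)}$. You also need $\gamma<1$ so that $tK=t^{1-\gamma}\to0$. Hence the tail never decays faster than about $t^{m+2}$, times the single extra factor $t^{1/m}$ from the line integral. The condition $\gamma(m+2)\ge N/m$ is therefore satisfiable only for $N$ up to roughly $m(m+2)$.

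This falls well short of the statement, which is claimed for every $N$ and is used with large $N$. Proposition~\ref{prop:finite-gap-main} invokes it with $K>mN$ and $N=N_m>2(m+1)/(m-1)$, and this exceeds $m(m+2)$ when $m$ is close to $1$. There is also a second truncation you did not account for: the bound $\delta+2tK$ on the foot needs the current velocity $|q|\le K$ as well, and this is not guaranteed at levels $j\ge1$.

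\paragraph{The $\eps$-power remark is wrong.}
Your tail term carries only one power of $\eps$. The suggestion that the missing $\eps^N$ can be absorbed into $C_{m,N}$ is backwards. For $\eps\le\eps_0$, a bound $C\eps t^{N/m}$ does not imply $C'\eps^{N+1}t^{N/m}$ with $C'$ independent of $\eps$.

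\paragraph{The missing idea: let the foot radius grow with the velocity.}
Do not truncate. Along a branch of the collision tree, $|p_{i+1}|\ge|p_i|/\sqrt2$ gives $|p_i|\le 2^{(j-i)/2}|p_j|$. So the accumulated drift of the foot is
\[
|Y_j-y_0|\le D_Nt|p_j|,\qquad D_N=2^{N/2+1}+1,
\]
which is linear in the newest velocity.

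The conic support \eqref{eq:conic-support-main} would then require $|p_j|^m\le R_0^m(\delta+D_Nt|p_j|)$. Choose $R_0^m\delta\le a_N^m/4$ and $R_0^mD_Nt\le a_N^{m-1}/4$. For $|p_j|\ge a_N$ the right-hand side is then at most $\tfrac12|p_j|^m$, a contradiction, precisely because $m>1$. So the free term vanishes at every node, for arbitrarily large velocities, and no tail term ever appears.

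Now take suprema of the pointwise representative of $U$ over the reachable sets $\mathcal R_j(t)$: points obtained by backward transport together with velocity jumps $|p_{j+1}|\ge|p_j|/\sqrt2$. The recursion
\[
H_j\le C\eps\, a_j^{-(m-1)/m}t^{1/m}H_{j+1}
\]
then closes purely multiplicatively for every $N$. It yields exactly $C_{m,N}\eps^{N+1}t^{N/m}$, with constants independent of $\eps$.
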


\begin{proof}
Fix $N\geq1$.  We iterate the high-velocity estimate $N$ times.
At each step the free term vanishes, while the time integral contributes
a factor of order $\eps t^{1/m}$.

We use the pointwise characteristic representative of $U$ fixed in
Proposition~\ref{prop:gain-only}, and set $U(0,X,p):=f_0(X,p)$. Since $\varrho_m(y,p)\geq1/2$ on $\mathcal A_m$, we may choose
$\delta_*,a_0>0$, depending only on $m$, such that
\[
 (y,p)\in\mathcal A_m,\qquad |y|\leq\delta_*
\quad\Longrightarrow\quad |p|\geq a_0.
\]
Set
$a_j:=2^{-j/2}a_0$ for $0\leq j\leq N$.

\smallskip
\noindent\emph{Step 1. The free-transport terms vanish.}
We keep track of the points at which the high-velocity estimate is
applied. Let $\mathcal R_0(t)$ consist of all triples $(s_0,X_0,p_0)$ such
that
\begin{equation}
0<s_0\leq t,\qquad
X_0=y_0+s_0p_0,\qquad
(y_0,p_0)\in\mathcal A_m,\qquad
|y_0|\leq\delta_{m,N},
\label{eq:reachable-root}
\end{equation}
where $\delta_{m,N}\leq\delta_*$ will be chosen below.

For $0\leq j<N$, define $\mathcal R_{j+1}(t)$ recursively as follows.
From any $(s_j,X_j,p_j)\in\mathcal R_j(t)$, choose
\begin{equation}
0\leq s_{j+1}\leq s_j,\qquad
X_{j+1}=X_j-(s_j-s_{j+1})p_j,\qquad
|p_{j+1}|\geq |p_j|/\sqrt2.
\label{eq:reachable-transition}
\end{equation}
The second relation follows the current characteristic back to an earlier
time. The third includes every velocity in the supremum in
\eqref{eq:adaptive-high-velocity}. We can therefore repeat that estimate
at all the points needed in the next step. In particular,
$|p_j|\geq a_j>0$ for $0\leq j\leq N$.
Hence all characteristics used below are nonzero, and the characteristic
formula for the chosen representative of $U$ applies to them.

For $(s_j,X_j,p_j)\in\mathcal R_j(t)$, set $Y_j:=X_j-s_jp_j$. The free term there is $f_0(Y_j,p_j)$.
Along any sequence of choices leading to this point,
\[
 Y_j-y_0=s_0p_0-\sum_{i=0}^{j-1}(s_i-s_{i+1})p_i-s_jp_j.
\]
Moreover, \eqref{eq:reachable-transition} gives
$|p_i|\leq2^{(j-i)/2}|p_j|$ for $0\leq i\leq j$, and
$\sum_{i=0}^{j-1}(s_i-s_{i+1})\leq t$.
Consequently, with $D_N:=2^{N/2+1}+1$,
\begin{equation}
 \begin{aligned}
 |Y_j-y_0|
 &\leq s_0|p_0|+\sum_{i=0}^{j-1}(s_i-s_{i+1})|p_i|+s_j|p_j|\\
 &\leq (2^{j/2+1}+1)t|p_j|\leq D_Nt|p_j|,
 \qquad 0\leq j\leq N.
 \end{aligned}
 \label{eq:reachable-foot-bound}
\end{equation}
Choose $\delta_{m,N}\leq\delta_*$ and $t_{m,N}>0$ so that
\[
 R_0^m\delta_{m,N}\leq\frac14a_N^m,
 \qquad R_0^mD_Nt_{m,N}\leq\frac14a_N^{m-1}.
\]
If $t\leq t_{m,N}$ and $f_0(Y_j,p_j)\ne0$, the support condition
\eqref{eq:conic-support-main} would imply
\[
 \begin{aligned}
 |p_j|^m
 &\leq R_0^m|Y_j|
 \leq R_0^m\bigl(\delta_{m,N}+D_Nt|p_j|\bigr)\\
 &\leq\frac14a_N^m+\frac14a_N^{m-1}|p_j|
\leq\frac12|p_j|^m.
 \end{aligned}
\]
The last inequality uses $|p_j|\geq a_j\geq a_N$ and $m>1$.
This is impossible, so $f_0(Y_j,p_j)=0$ for every point in
$\mathcal R_j(t)$, $0\leq j\leq N$.

\smallskip
\noindent\emph{Step 2. Iterating the high-velocity estimate.}
For $0<t\leq t_{m,N}$, define
\begin{equation}
H_j(t):=
\sup_{(s,X,p)\in\mathcal R_j(t)}U(s,X,p),
\qquad 0\leq j\leq N.
\label{eq:reachable-envelope}
\end{equation}
We use ordinary suprema of the pointwise representative fixed above.
After enlarging $A_{0,m}$ if necessary,
\eqref{eq:gain-envelope} and \eqref{eq:trace-weight-bound} give, for
$s\geq0$,
\[
 U(s,X,p)
 \leq A_{0,m}\eps\,
 \varrho_m(X-sp,p)^{-(m+2)}
 \leq A_{0,m}\eps |p|^{-(m+2)}.
\]
Thus the quantities $H_j(t)$ are finite, and in particular
\begin{equation}
H_N(t)\leq A_{0,m}\eps a_N^{-(m+2)}.
\label{eq:terminal-reachable-envelope}
\end{equation}

Fix $j<N$ and $(s,X,p)\in\mathcal R_j(t)$, and write $Z(\tau):=X-(s-\tau)p$. By the characteristic formula for $U$,
\[
 U(s,X,p)
 =
 \int_0^s\Qp(U,U)(\tau,Z(\tau),p)\,\dd\tau.
\]

Using \eqref{eq:adaptive-high-velocity}, we obtain

$$
 U(s,X,p)
 \leq
 C_{\mathbf b}\int_0^s
 \rho_U(\tau,Z(\tau))
 \esssup_{|q|\geq|p|/\sqrt2}
 U(\tau,Z(\tau),q)\,\dd\tau.
$$

For every $\tau\in[0,s]$ and every
$|q|\geq|p|/\sqrt2$, the point $
 (\tau,Z(\tau),q)
$ belongs to $\mathcal R_{j+1}(t)$.  Hence
\[
 U(s,X,p)
 \leq
 C_{\mathbf b}H_{j+1}(t)
 \int_0^s
 \rho_U\bigl(\tau,(X-sp)+\tau p\bigr)\,\dd\tau.
\]

Since $|p|\geq a_j$, Lemma~\ref{lem:line-bound-main} yields
\[
 \int_0^s
 \rho_U\bigl(\tau,(X-sp)+\tau p\bigr)\,\dd\tau
 \leq
 C_mA_{\rho,m}\eps
 a_j^{-(m-1)/m}t^{1/m}.
\]

Taking the supremum over $\mathcal R_j(t)$ therefore gives
\begin{equation}
H_j(t)
\leq
C_mC_{\mathbf b}A_{\rho,m}\eps
a_j^{-(m-1)/m}t^{1/m}
H_{j+1}(t),
\qquad 0\leq j<N.
\label{eq:reachable-envelope-recursion}
\end{equation}

Iterating \eqref{eq:reachable-envelope-recursion} from $j=0$ to $j=N-1$
and using \eqref{eq:terminal-reachable-envelope}, we obtain
\[
\begin{aligned}
 H_0(t)
\leq
 A_{0,m}\eps a_N^{-(m+2)}
 \prod_{j=0}^{N-1}
 \bigl(
 C_mC_{\mathbf b}A_{\rho,m}\eps
 a_j^{-(m-1)/m}t^{1/m}
 \bigr)\leq C_{m,N}\eps^{N+1}t^{N/m}.
\end{aligned}
\]

Finally, if $(y,v)\in\mathcal A_m$ and
$|y|\leq\delta_{m,N}$, then $(t,y+tv,v)\in\mathcal R_0(t)$. Therefore
\[
 U^\#(t,y,v)
 =
 U(t,y+tv,v)
 \leq H_0(t)
 \leq C_{m,N}\eps^{N+1}t^{N/m},
\]
which proves \eqref{eq:inner-flatness-main}.
\end{proof}

\subsection{Weighted collision and high-velocity estimates}
We next estimate the gain term in a weighted velocity norm and use the
inner-region estimate to control large velocities. These bounds will be
used in the region away from $y=0$.
For a velocity function $H(x,\cdot)$, write
\[
 \|H(x)\|_{\infty,m+2}
 :=\esssup_{v\in\R^3}\bracket{v}^{m+2}|H(x,v)|.
\]
\begin{lemma}[Local weighted collision bound]
\label{lem:local-weighted-collision}
For nonnegative $F,G$, under \eqref{eq:strong-cutoff-intro},
\begin{equation}
 \|\Qp(F,G)(x)\|_{\infty,m+2}
 \leq C_{m,\mathbf{b}}
 \bigl(\|F(x)\|_{\infty,m+2}\rho_G(x)
 +\rho_F(x)\|G(x)\|_{\infty,m+2}\bigr).
 \label{eq:local-weighted-gain}
\end{equation}
Consequently, for $U,H\ge0$, since
$\rho_H\leq C_m\|H\|_{\infty,m+2}$,
\begin{equation}
 \|\mathcal K_UH(x)\|_{\infty,m+2}
 \leq C_{m,\mathbf{b}}
 \|U(x)\|_{\infty,m+2}\|H(x)\|_{\infty,m+2},\,
 \mathcal K_UH:=\Qp(H,U)+\Qp(U,H)+\beta_{\mathbf{b}}\rho_HU.
 \label{eq:local-weighted-K}
\end{equation}
\end{lemma}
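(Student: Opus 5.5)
We prove the weighted bound \eqref{eq:local-weighted-gain} pointwise in $v$ by splitting according to which incoming velocity carries at least half of the energy. Fix $x$ and suppress it from the notation. By energy conservation, $|v|^2\le|v'|^2+|v_*'|^2$, so at least one of $|v'|,|v_*'|$ is at least $|v|/\sqrt2$. Hence $\bracket{v}\le\sqrt2\max\{\bracket{v'},\bracket{v_*'}\}$, and therefore
\[
 \bracket{v}^{m+2}F(v')G(v_*')
 \le 2^{(m+2)/2}\bigl(\bracket{v'}^{m+2}F(v')\,G(v_*')
 +F(v')\,\bracket{v_*'}^{m+2}G(v_*')\bigr).
\]
In the first term we bound $\bracket{v'}^{m+2}F(v')$ by $\|F(x)\|_{\infty,m+2}$. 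In the second we bound $\bracket{v_*'}^{m+2}G(v_*')$ by $\|G(x)\|_{\infty,m+2}$. This is the same splitting as in Lemma~\ref{lem:high-velocity-main}, with the sup over a high-velocity region replaced by the weighted sup norm.

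Next we integrate against $\mathbf b(\widehat q\cdot\sigma)\,\dd\sigma\dd v_*$, writing $q=v-v_*$ so that $v'=v-a$ and $v_*'=v-c$. The first term leaves $\int\!\!\int\mathbf b\,G(v-c)\,\dd\sigma\dd q$, which equals $K_+\rho_G$ by \eqref{eq:angular-integration-plus}. The second leaves $\int\!\!\int\mathbf b\,F(v-a)\,\dd\sigma\dd q$, which equals $K_-\rho_F$ by \eqref{eq:angular-integration-minus}. Lemma~\ref{lem:angular-integration} requires $h\in L^1$, but the identities hold by Tonelli for every nonnegative measurable $h$, with the value $\infty$ allowed. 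So no extra hypothesis is needed, and \eqref{eq:local-weighted-gain} follows with $C_{m,\mathbf b}=2^{(m+2)/2}(K_++K_-)$. This is the only place where the condition $\Lambda_{\mathbf b}<\infty$ enters.

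For \eqref{eq:local-weighted-K} we first record the density bound. Since $m+2>3$,
\[
 \rho_H(x)\le\|H(x)\|_{\infty,m+2}\int_{\R^3}\bracket{v}^{-(m+2)}\dd v=C_m\|H(x)\|_{\infty,m+2}.
\]
Applying \eqref{eq:local-weighted-gain} to $\Qp(H,U)$ and to $\Qp(U,H)$ gives terms of the form $\|H\|\rho_U+\rho_H\|U\|$, and each is at most $C_m\|U\|\,\|H\|$ by the density bound. The loss-type term is handled directly:
\[
 \bracket{v}^{m+2}\beta_{\mathbf b}\rho_HU(v)\le\beta_{\mathbf b}C_m\|H\|\,\|U\|.
\]
Summing the three contributions proves \eqref{eq:local-weighted-K}.

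There is no real obstacle. The only point needing care is that both angular identities $K_\pm$ are used, one for each branch of the energy splitting. This is why the integrability condition is needed at both the grazing endpoint $z=1$ and the head-on endpoint $z=-1$.
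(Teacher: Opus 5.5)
Your proof is correct and follows essentially the same route as the paper: the energy-conservation split $\bracket{v}^{m+2}\le C_m(\bracket{v'}^{m+2}+\bracket{v_*'}^{m+2})$, a weighted supremum on one factor with the identity $K_+$ (resp.\ $K_-$) applied to the other, and then $\rho_H\le C_m\|H(x)\|_{\infty,m+2}$ to handle $\mathcal K_U$. Your explicit constant $2^{(m+2)/2}$ and your remark that the identities of Lemma~\ref{lem:angular-integration} hold by Tonelli for every nonnegative measurable $h$ make explicit what the paper leaves implicit.
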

\begin{proof}
Energy conservation gives
\[
 \bracket{v}^{m+2}\leq C_m\bigl(\bracket{v'}^{m+2}+
 \bracket{v_*'}^{m+2}\bigr).
\]
For the first resulting term, take the weighted supremum of the factor at
$v'$ and apply \eqref{eq:angular-integration-plus} to the factor at
$v_*'$. This gives $C_mK_+\|F(x)\|_{\infty,m+2}\rho_G(x)$.
For the second term, take the weighted supremum at $v_*'$ and apply
\eqref{eq:angular-integration-minus}, giving
$C_mK_-\rho_F(x)\|G(x)\|_{\infty,m+2}$. This proves
\eqref{eq:local-weighted-gain}. For signed functions the same bound holds
with $\rho_{|F|}$ and $\rho_{|G|}$, by positivity of the gain kernel.
The last assertion follows from
$\int_{\R^3}\bracket{v}^{-(m+2)}\,\dd v<\infty$, since $m>1$.
\end{proof}
\begin{lemma}[Rescaled high-velocity flatness]
\label{lem:rescaled-high-flatness}
Fix $R_*,d>0$ and an integer $N\geq1$.  There are $C_{m,N},t_{m,N}>0$
such that
\begin{equation}
 \esssup_{\substack{0<s\leq t,\ |x|\leq R_*\\ |v|\geq d/t}}
 \bracket{v}^{m+2} U(s,x,v)
 \leq C_{m,N}\eps^{N+1}t^N,
 \qquad 0<t\leq t_{m,N}.
 \label{eq:rescaled-high-flatness}
\end{equation}
The constants may also depend on the fixed parameters $R_*,d,\mathbf{b},\phi$.
\end{lemma}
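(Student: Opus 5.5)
We reduce \eqref{eq:rescaled-high-flatness} to Lemma~\ref{lem:inner-flatness-main} by the kinetic scaling of $U$ (Proposition~\ref{prop:gain-only}). Fix $(s,x,v)$ with $0<s\le t$, $|x|\le R_*$, $|v|\ge d/t$. Choose $r:=|v|^{-1}\cdot\lambda$ with a normalizing factor $\lambda$ to be fixed. Invariance gives
\[
 U(s,x,v)=r^{m+2}U(r^{m-1}s,r^mx,rv).
\]
Writing $y:=x-sv$, the rescaled point has free foot $(r^my,rv)$ and time $r^{m-1}s$. Since $\varrho_m$ is $1$-homogeneous, we want $\varrho_m(r^my,rv)\in[1/2,2]$, so we take $r:=\varrho_m(y,v)^{-1}$. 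Then $(r^my,rv)\in\mathcal A_m$, and $|rv|=|v|/\varrho_m(y,v)$.

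The point is to check that, on our set, the rescaled foot lies in the inner region $|r^my|\le\delta_{m,N'}$ and the rescaled time is small.

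\emph{Rescaled time.} Since $\varrho_m\ge|v|\ge d/t$, we have $r\le t/d$. Hence $r^{m-1}s\le (t/d)^{m-1}t$, which is $\le t_{m,N'}$ for small $t$.

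\emph{Spatial foot.} We have $|y|\le R_*+s|v|$ and $|v|\ge d/t\ge d/s$ (using $s\le t$). Note that $r^m|y|\le r^m R_*+r^m s|v|$. The first term is $\le (t/d)^mR_*$, which is small. For the second, use $r\le|v|^{-1}$ to get $r^ms|v|\le s|v|^{1-m}\le t\,(d/t)^{1-m}=d^{1-m}t^m$, which is also small. So for $t$ small, $|r^my|\le\delta_{m,N'}$.

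With both conditions in hand, Lemma~\ref{lem:inner-flatness-main} applied at time $r^{m-1}s$ gives
\[
U(s,x,v)\le r^{m+2}C\eps^{N'+1}(r^{m-1}s)^{N'/m}.
\]
Multiplying by $\bracket{v}^{m+2}\le C|v|^{m+2}\le C r^{-(m+2)}$ (using $|v|\ge d/t\ge1$ for small $t$, and $r\le|v|^{-1}$) cancels the amplitude factor. We are left with
\[
C\eps^{N'+1}\bigl((t/d)^{m-1}t\bigr)^{N'/m}\le C\eps^{N'+1}t^{N'}.
\]
Choosing $N'=N$ yields \eqref{eq:rescaled-high-flatness}.

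The main obstacle is the measure-theoretic bookkeeping. The inner lemma is an essential supremum over $(y,v)$ at fixed time, while here we take the essential supremum in $(s,x,v)$. We handle this with the pointwise characteristic representative: the proof of Lemma~\ref{lem:inner-flatness-main} actually bounds the ordinary supremum $H_0$ over $\mathcal R_0$. We also use that the scaling identity holds for all $r$ simultaneously for that representative, so no null-set issues arise from composing with the $(s,x,v)$-dependent choice of $r$.
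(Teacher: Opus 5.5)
Your proposal is correct and follows essentially the paper's proof: use the exact self-similarity of the pointwise representative of $U$ to move the free foot into the inner part of $\mathcal A_m$ at a rescaled time of order $t^m$, then invoke the pointwise form of Lemma~\ref{lem:inner-flatness-main}. The only differences are minor. You normalize by $r=\varrho_m(y,v)^{-1}$, which lands exactly on $\varrho_m=1$, whereas the paper takes $r=|v|^{-1}$ and uses the smallness of $|r^my|$ to stay in $\mathcal A_m$. Also, your side remark $d/t\ge d/s$ is backwards for $s\le t$, but it is never used, since the bound $s|v|^{1-m}\le t(d/t)^{1-m}$ needs only $s\le t$ and $|v|\ge d/t$.
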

\begin{proof}
Put $R=|v|$, $r=R^{-1}$, and $y=x-sv$.  Exact self-similarity gives
\begin{equation}
 U(s,x,v)=U^\#(s,y,v)
 =r^{m+2}U^\#(r^{m-1}s,r^my,rv).
 \label{eq:high-flat-rescaling}
\end{equation}
Here $|rv|=1$, while, uniformly in the displayed region,
\[
 |r^my|\leq r^mR_*+sr^{m-1}\leq C_{m,R_*,d}t^m,
 \qquad r^{m-1}s\leq C_{m,d}t^m.
\]
Thus $(r^my,rv)$ belongs to the normalized annulus $\mathcal A_m$ and its
spatial component lies in the inner region of
Lemma~\ref{lem:inner-flatness-main}, once $t$ is small.  Applying that lemma
at time $r^{m-1}s$ gives
\[
 \begin{aligned}
 U(s,x,v)
 \leq C_{m,N}\eps^{N+1}R^{-(m+2)}(r^{m-1}s)^{N/m}
 \leq C_{m,N}\eps^{N+1}R^{-(m+2)}t^N.
 \end{aligned}
\]
Since $R\geq d/t\geq1$ after reducing $t_{m,N}$,
$\bracket{v}^{m+2}R^{-(m+2)}\leq C_m$, proving the claim. The estimate holds
pointwise for the chosen representative of $U$, by the pointwise version
of Lemma~\ref{lem:inner-flatness-main} proved above.
\end{proof}
\subsection{Local trace and stopped characteristics}
We first prove convergence to the initial trace away from the spatial
origin. We then estimate a function with zero initial trace by integrating
along characteristics until time zero or their first exit from a spatial
annulus.
Set
\begin{equation}
 B_m:=2^m+1.
\end{equation}
Thus $(y,v)\in\mathcal A_m$ implies $|y|\le2^m$ and $|v|\le2$,
so $|y+tv|\le B_m$ when $0<t\le1/2$.
\begin{lemma}[Exterior initial trace]
\label{lem:exterior-trace-direct}
Fix $0<\delta\le1$ and $R\ge1$. There are $t_{\delta,R}>0$ and
$C_{\delta,R}<\infty$ such that, for
$H\in\{U,f^{\mathrm{lo}},f^{\mathrm{up}}\}$,
\begin{equation}
 \esssup_{\substack{\delta\le|y|\le R\\|v|\le R}}
 |H^\#(t,y,v)-f_0(y,v)|\le C_{\delta,R}\eps^2t,
 \qquad 0<t\le t_{\delta,R}.
 \label{eq:exterior-trace-direct}
\end{equation}
\end{lemma}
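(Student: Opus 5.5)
Since all three functions satisfy a characteristic formula with the same trace $f_0$, I would write, for $H\in\{U,f^{\mathrm{lo}},f^{\mathrm{up}}\}$ and $\nu\in\{0,\nu_{f^{\mathrm{up}}},\nu_{f^{\mathrm{lo}}}\}$ the corresponding collision frequency,
\[
 H^\#(t,y,v)-f_0(y,v)
 =\bigl(e^{-A(0,t;y,v)}-1\bigr)f_0(y,v)
 +\int_0^t e^{-A(s,t;y,v)}\Qp(H,H)(s,y+sv,v)\,\dd s .
\]
I would bound the two terms separately, using only $0\le H\le U$ and $\nu\le\nu_U$. By monotonicity of $\Qp$, the second term is at most $\int_0^t\Qp(U,U)(s,y+sv,v)\,\dd s$. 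The first term is at most $A_U(0,t;y,v)\,f_0(y,v)$. On the region $\delta\le|y|\le R$, $|v|\le R$, the bound \eqref{eq:trace-weight-bound} gives $f_0\le C_{m,\phi}\eps\,\delta^{-(m+2)/m}$. So it suffices to show that both $A_U(0,t;y,v)$ and $\int_0^t\Qp(U,U)(s,y+sv,v)\,\dd s$ are $O(\eps t)$ and $O(\eps^2 t)$ respectively, uniformly on this region.

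\emph{The loss integral.} Take $t\le t_{\delta,R}:=\delta/(2R)$. Then $|y+sv|\ge\delta/2$ for $s\le t$. By \eqref{eq:density-envelope},
\[
 \rho_U(s,x)\le A_{\rho,m}\eps\, s^{-1}\bigl(|x|/s^{m/(m-1)}\bigr)^{-(m-1)/m}=A_{\rho,m}\eps|x|^{-(m-1)/m},
\]
so $\rho_U(s,y+sv)\le C_\delta\eps$. Integrating gives $A_U(0,t)\le C\beta_{\mathbf b}\eps t$. The first term is therefore at most $C_{\delta,R}\eps^2t$.

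\emph{The gain integral.} This is the main point. I need $\Qp(U,U)(s,x,v)\le C_{\delta,R}\eps^2$ for $|x|\ge\delta/2$, $|v|\le R$ and $s\le t$ small. I would use Lemma~\ref{lem:local-weighted-collision} at the point $x$:
\[
 \Qp(U,U)(s,x,v)\le \bracket v^{-(m+2)}\,C\,\|U(s,x)\|_{\infty,m+2}\,\rho_U(s,x).
\]
We already have $\rho_U(s,x)\le C_\delta\eps$, so it remains to bound $\|U(s,x)\|_{\infty,m+2}\le C_\delta\eps$ for $|x|\ge\delta/2$. By \eqref{eq:gain-envelope}, $U(s,x,p)\le A_{0,m}\eps\,\varrho_m(x-sp,p)^{-(m+2)}$, and I split according to the size of $|p|$.

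If $s|p|\le|x|/2$, then $|x-sp|\ge\delta/4$. This gives $\varrho_m^{m+2}\gtrsim \max(\delta^{(m+2)/m},|p|^{m+2})\gtrsim_\delta\bracket p^{m+2}$.

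If $s|p|>|x|/2$, then $|p|\ge\delta/(4s)\ge\delta/(4t)$, which is large. Here Lemma~\ref{lem:rescaled-high-flatness}, with $R_*$ a bound for $|x|$ and $d=\delta/4$, already gives $\bracket p^{m+2}U\le C\eps^{2}$ for small $t$. Alternatively, one can use directly $\varrho_m\ge|p|$ to get $\bracket p^{m+2}U\le C\eps$ once $|p|\ge1$.

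Hence $\|U(s,x)\|_{\infty,m+2}\le C_\delta\eps$. Integrating in $s$ over $[0,t]$ then gives the second term $\le C_{\delta,R}\eps^2t$.

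\emph{The expected obstacle} is handling $|x|$ unbounded above and the representative issues. Since $|y|\le R$ and $t\le1$, $|x|\le 2R$, so $R_*=2R$ suffices. The bounds hold pointwise along every nonzero characteristic for the chosen representatives, and this yields \eqref{eq:exterior-trace-direct} after taking the essential supremum.
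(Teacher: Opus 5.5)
Your proposal is correct and follows the paper's proof. You use the characteristic formula to split $H^\#-f_0$ into a loss part, bounded by $A_U(0,t;y,v)f_0=O(\eps^2t)$, and a gain part, bounded by $\int_0^t\Qp(U,U)$; the gain part is controlled by Lemma~\ref{lem:local-weighted-collision} once \eqref{eq:gain-envelope} gives $\|U(s,x)\|_{\infty,m+2}\le C_\delta\eps$ for $|x|\ge\delta/2$.

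Your case split ($s|p|\le|x|/2$ versus $|p|\ge\delta/(4t)$, using $\varrho_m\ge|p|$) is just a reparametrization of the paper's split $|p|<1$ versus $|p|\ge1$. The appeal to Lemma~\ref{lem:rescaled-high-flatness} is therefore unnecessary. The direct route only needs the extra restriction $t\le\delta/4$, so that $\delta/(4t)\ge1$.
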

\begin{proof}
For $t$ small enough, every characteristic in the displayed set stays in
$\{\delta/2\le|x|\le R+1\}$ up to time $t$. On this spatial annulus,
\eqref{eq:gain-envelope} gives
\begin{equation}
 \sup_{0<s\le t}\sup_{\delta/2\le|x|\le R+1}
 \|U(s,x)\|_{\infty,m+2}\le C_{m,\delta,R}\eps.
 \label{eq:exterior-local-U}
\end{equation}
Indeed, for $|v|\ge1$ the velocity term in
$\varrho_m^{-(m+2)}$ suffices; for
$|v|<1$, use $|x-sv|\ge\delta/4$. Consequently
$\rho_U\le C_{\delta,R}\eps$ there, and
Lemma~\ref{lem:local-weighted-collision} gives
\[
 \int_0^t\Qp(U,U)(s,y+sv,v)\,\dd s
 \le C_{\delta,R}\eps^2t,
 \qquad A_H(0,t;y,v)\le C_{\delta,R}\eps t
 \quad(0\le H\le U).
\]
Also $f_0(y,v)\le C_\delta\eps$ when $|y|\ge\delta$.
Formula \eqref{eq:gain-only-mild} proves the assertion for $U$.
For $f^{\mathrm{lo}}$ and $f^{\mathrm{up}}$, apply
\eqref{eq:lower-limit-formula}--\eqref{eq:upper-limit-formula}, using
$1-e^{-a}\le a$ for $a\ge0$ and the same gain bound.
\end{proof}
\begin{lemma}[Localization of a positive collision inequality]
\label{lem:stopped-localization}
Fix $0<\delta\le1$ and an integer $n\ge1$. Set
\[
 \Omega_0:=\{x:\delta/8<|x|<2B_m\}.
\]
Let $T>0$. Suppose $H\ge0$ on $(0,T)\times\Omega_0\times\R^3$ has an
absolutely continuous representative along almost every characteristic
segment in this cylinder, with
\begin{equation}
 (\partial_t+v\cdot\nabla_x)H\le\mathcal K_UH,
 \qquad 0\le H\le C_*U,
 \label{eq:localization-hypotheses}
\end{equation}
and zero initial trace on the closed annulus
$\{\delta/8\le|x|\le2B_m\}$, along the characteristics under consideration.
Here $\mathcal K_U$ is defined in \eqref{eq:local-weighted-K}.
Then, for some $0<t_{\delta,n}\le T$,
\begin{equation}
 \esssup_{\substack{0<s\le t,\ \delta/2\le|x|\le B_m\\v\in\R^3}}
 \bracket{v}^{m+2}H(s,x,v)\le C_{m,\delta,n,C_*}t^n,
 \qquad 0<t\le t_{\delta,n}.
 \label{eq:stopped-localization}
\end{equation}
For the characteristic representative, at every fixed
$0<s\leq t_{\delta,n}$,
\[
 \esssup_{\substack{\delta/2\leq|x|\leq B_m\\v\in\R^3}}
 \bracket{v}^{m+2}H(s,x,v)
 \leq C_{m,\delta,n,C_*}s^n.
\]
The constants are uniform for $0<\eps\le\eps_0$, after fixing a sufficiently
small upper bound $\eps_0$ as in Proposition~\ref{prop:gain-only}.
\end{lemma}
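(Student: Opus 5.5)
We propose to prove Lemma~\ref{lem:stopped-localization} by an $n$-fold iteration of the Duhamel inequality along characteristics, on a nested family of spatial annuli. Each annulus is stopped either at time zero or at its first exit from the next larger annulus. Fix radii
\[
 \delta/2=r_0>r_1>\dots>r_n>\delta/8,\qquad B_m=R_0'<R_1'<\dots<R_n'<2B_m,
\]
with equal gaps $\kappa:=\min\{(3\delta/8)/n,\,B_m/n\}$. Set $\Omega_j:=\{r_j\le|x|\le R_j'\}$ and
\[
 E_j(t):=\esssup_{0<s\le t,\ x\in\Omega_j,\ v}\bracket{v}^{m+2}H(s,x,v).
\]
First, as in \eqref{eq:exterior-local-U}, the envelope \eqref{eq:gain-envelope} gives $\sup_{s>0}\sup_{x\in\Omega_n}\|U(s,x)\|_{\infty,m+2}\le C_{m,\delta}\eps$. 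Together with $H\le C_*U$, this yields the a priori bound $E_n(t)\le C_{m,\delta}C_*\eps$.

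For the recursion, fix $(s,x,v)$ with $x\in\Omega_j$, $j<n$, and follow the backward characteristic $\tau\mapsto x-(s-\tau)v$. Let $\tau_*\in[0,s)$ be its last exit time from $\Omega_{j+1}$, with $\tau_*=0$ if it never leaves. Integrating the differential inequality on $(\tau_*,s)$ gives
\[
 H(s,x,v)\le H(\tau_*,x-(s-\tau_*)v,v)+\int_{\tau_*}^s\mathcal K_UH(\tau,x-(s-\tau)v,v)\,\dd\tau .
\]
If $\tau_*=0$, the boundary term vanishes by the zero initial trace. For the integral we use Lemma~\ref{lem:local-weighted-collision}, namely \eqref{eq:local-weighted-K}. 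Weighting by $\bracket{v}^{m+2}$, it is bounded by $C_{m,\mathbf b}\,C_{m,\delta}\eps\,t\,E_{j+1}(t)$.

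If $\tau_*>0$, the characteristic crosses the gap of width $\kappa$ in time at most $s\le t$, so $|v|\ge\kappa/t$. The boundary point then has $|x|\le 2B_m$ and velocity at least $\kappa/t$. Lemma~\ref{lem:rescaled-high-flatness}, applied with $R_*=2B_m$, $d=\kappa$ and exponent $n$, bounds $\bracket{v}^{m+2}C_*U$ there by $C\eps^{n+1}t^n$. Hence
\[
 E_j(t)\le C_1t\,E_{j+1}(t)+C_2t^n .
\]
Iterating from $j=0$ to $n-1$ and inserting $E_n\le C$ gives $E_0(t)\le C(t^n+t^n)$, which is \eqref{eq:stopped-localization} for $t\le t_{\delta,n}:=\min\{T,t_{m,n},1/2\}$. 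The fixed-time version follows by taking $t=s$ and using the characteristic representative. All constants depend on $\eps$ only through upper bounds valid for $\eps\le\eps_0$.

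The main obstacle is making the stopping argument rigorous at the level of essential suprema. The boundary point lies on a two-dimensional sphere, a null set, while Lemma~\ref{lem:rescaled-high-flatness} and the hypotheses hold only almost everywhere. We plan to avoid this by working with the characteristic representative, for which both the rescaled flatness bound and the absolute continuity hold pointwise along almost every characteristic, as noted at the end of the proof of Lemma~\ref{lem:rescaled-high-flatness}. The recursion is then carried out with suprema over almost every characteristic rather than over null boundary sets. A second point to check is that $\mathcal K_UH$ at spatial points of $\Omega_{j+1}$ involves only $H$ at those same spatial points. This holds because the collision operator is local in $x$, so $E_{j+1}$ genuinely controls it.
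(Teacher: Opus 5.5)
Your proposal is correct and follows the paper's proof essentially step for step. The paper uses nested annuli separated by gaps of order $\delta/n$, stops backward characteristics at time zero or at their first exit, handles the exit case through $|v|\ge\kappa/t$ and Lemma~\ref{lem:rescaled-high-flatness}, and iterates the recursion $E_j(t)\le C\eps t\,E_{j+1}(t)+Ct^n$ $n$ times starting from the crude bound $C_{m,\delta}C_*\eps$ on the outermost annulus; it only indexes the annuli in the opposite order and takes the exit value as a limit from inside rather than through the pointwise representative.

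There are two harmless slips. First, the envelope \eqref{eq:gain-envelope} controls $\|U(s,x)\|_{\infty,m+2}$ on your outermost annulus only for bounded $s$, not for all $s>0$; your choice of $t_{\delta,n}$ already enforces $s\le 1/2$, which suffices. Second, $\kappa=3\delta/(8n)$ gives $r_n=\delta/8$ exactly, so take $\kappa$ slightly smaller to keep that annulus strictly inside the open cylinder.
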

\begin{proof}
For $0\le k\le n$, define the open annuli
\[
 \Omega_k:=\left\{x:
 \frac{\delta}{8}+\frac{k\delta}{8n}<|x|
 <2B_m-\frac{kB_m}{2n}\right\}.
\]
This gives the prescribed $\Omega_0$ and
$\Omega_n=\{x:\delta/4<|x|<3B_m/2\}$. Thus
\[
 \{x:\delta/2\le|x|\le B_m\}
 \Subset\Omega_n\Subset\Omega_{n-1}\Subset\cdots\Subset\Omega_1
 \Subset\Omega_0.
\]
The distance between successive boundaries is
\[
 d_k:=\operatorname{dist}(\overline\Omega_k,\partial\Omega_{k-1})
 =\min\left\{\frac{\delta}{8n},\frac{B_m}{2n}\right\}
 =\frac{\delta}{8n}>0,\qquad 1\le k\le n.
\]
The argument for
\eqref{eq:exterior-local-U}, with the inner radius $\delta/8$, gives
\begin{equation}
 \sup_{0<s\le t}\sup_{x\in\Omega_0}\|U(s,x)\|_{\infty,m+2}
 \le C_{m,\delta}\eps
 \label{eq:local-U-weighted}
\end{equation}
when $t$ is sufficiently small. Define
\[
 Z_k(t):=\esssup_{\substack{0<s\le t,\ x\in\Omega_k\\v\in\R^3}}
 \bracket{v}^{m+2}H(s,x,v),\qquad 0\le k\le n.
\]
Then
\begin{equation}
 Z_0(t)\le C_{m,\delta} C_*\eps.
 \label{eq:Z0-bound}
\end{equation}
Fix a characteristic ending at $(s,x,v)$ with $x\in\Omega_k$, and follow
$\gamma(\tau)=x-(s-\tau)v$ backward until time zero or its first exit
from $\Omega_{k-1}$. Denote the stopping time by $\tau_*$. If
$\tau_*>0$, the spatial separation gives
\begin{equation}
 |v|(s-\tau_*)\ge d_k,\qquad |v|\ge d_k/t.
 \label{eq:exit-high-velocity}
\end{equation}
The bound $H\le C_*U$ and Lemma~\ref{lem:rescaled-high-flatness}, with
$R_*=2B_m$ and $N=n$, imply
\begin{equation}
 \bracket{v}^{m+2}H(\tau_*,\gamma(\tau_*),v)
 \le C_{m,\delta,n,C_*}t^n.
 \label{eq:exit-flat-bound}
\end{equation}
The value at the exit time is taken as a limit from inside the annulus:
integrate first from $\tau_*+h$, apply the bound there, and let
$h\downarrow0$. If the characteristic reaches time zero, including an
exit at that time, its initial point lies in the closed annulus where
the initial trace is zero.
Integrating \eqref{eq:localization-hypotheses} from the stopping time to
the final time and applying
Lemma~\ref{lem:local-weighted-collision} and
\eqref{eq:local-U-weighted}, gives
\[
 \bracket{v}^{m+2}H(s,x,v)
 \le C_{m,\delta,n,C_*}t^n
 +C_{m,\delta}\eps\int_{\tau_*}^sZ_{k-1}(\tau)\,\dd\tau.
\]
Hence
\begin{equation}
 Z_k(t)\le C_{m,\delta,n,C_*}t^n+C_{m,\delta}\eps tZ_{k-1}(t),
 \qquad 1\le k\le n.
 \label{eq:nested-annulus-recursion}
\end{equation}
Iterating \eqref{eq:nested-annulus-recursion} and using
\eqref{eq:Z0-bound}, we obtain, for small $t$,
\[
 \begin{aligned}
 Z_n(t)
 \le C_{m,\delta,n,C_*}t^n
 \sum_{j=0}^{n-1}(C_{m,\delta}\eps t)^j
 +(C_{m,\delta}\eps t)^n Z_0(t)\le C_{m,\delta,n,C_*}t^n.
 \end{aligned}
\]
This proves \eqref{eq:stopped-localization}.

For the fixed-time statement, fix $0<s<t$ in the interval just obtained.
By Fubini's theorem, the bound defining $Z_n(t)$ holds for almost every
time along almost every characteristic in $\Omega_n$. Since $H^\#$ is
continuous along these characteristics, the bound also holds at time $s$
whenever $x=y+sv$ lies in the closed annulus of the statement, which is
strictly inside
$\Omega_n$. Therefore
\[
 \esssup_{\substack{\delta/2\le|x|\le B_m\\v\in\R^3}}
 \bracket{v}^{m+2}H(s,x,v)
 \le Z_n(t)\le C_{m,\delta,n,C_*}t^n.
\]
Letting $t\downarrow s$ gives the required bound at every fixed time,
after reducing $t_{\delta,n}$ if necessary.
\end{proof}
\begin{proposition}[High-order short-time gap bound]
\label{prop:finite-gap-main}
For every integer $N\geq1$, there exist $C_{m,N},\tau_{m,N}>0$ such that
\begin{equation}
 \esssup_{(y,v)\in\mathcal A_m}w^\#(t,y,v)\le C_{m,N}t^N,
 \qquad 0<t\le \tau_{m,N}.
 \label{eq:finite-gap-main}
\end{equation}
\end{proposition}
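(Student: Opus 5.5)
The proof splits $\mathcal A_m$ into an inner part, where $|y|\le\delta$, and an outer part, where $|y|\ge\delta$. Here $\delta:=\min\{\delta_{m,mN},1\}$ is fixed from Lemma~\ref{lem:inner-flatness-main} applied with exponent $mN$.

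\emph{Inner part.} Since $0\le w\le f^{\mathrm{up}}\le U$, Lemma~\ref{lem:inner-flatness-main} with $N$ replaced by $mN$ gives
\[
 w^\#(t,y,v)\le U^\#(t,y,v)\le C\eps^{mN+1}t^{N}
\]
for $(y,v)\in\mathcal A_m$ with $|y|\le\delta$ and $0<t\le t_{m,mN}$. No further argument is needed here.

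\emph{Outer part.} Subtract \eqref{eq:lower-limit-formula} from \eqref{eq:upper-limit-formula} to get a differential inequality for $w$ along characteristics. Formally,
\[
 (\partial_t+v\cdot\nabla_x)w
 =\Qp(f^{\mathrm{up}},f^{\mathrm{up}})-\Qp(f^{\mathrm{lo}},f^{\mathrm{lo}})
 -\nu_{f^{\mathrm{lo}}}f^{\mathrm{up}}+\nu_{f^{\mathrm{up}}}f^{\mathrm{lo}}.
\]
Bilinearity gives
\[
 \Qp(f^{\mathrm{up}},f^{\mathrm{up}})-\Qp(f^{\mathrm{lo}},f^{\mathrm{lo}})
 =\Qp(w,f^{\mathrm{up}})+\Qp(f^{\mathrm{lo}},w)\le\Qp(w,U)+\Qp(U,w).
\]
For the loss terms, write
\[
 -\nu_{f^{\mathrm{lo}}}f^{\mathrm{up}}+\nu_{f^{\mathrm{up}}}f^{\mathrm{lo}}
 =\nu_w f^{\mathrm{lo}}-\nu_{f^{\mathrm{lo}}}w\le\beta_{\mathbf b}\rho_wU.
\]
Hence $(\partial_t+v\cdot\nabla_x)w\le\mathcal K_Uw$ and $0\le w\le U$, which are the hypotheses \eqref{eq:localization-hypotheses} with $C_*=1$. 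The inequality holds in integrated form along almost every characteristic, because both limit formulas are absolutely continuous there with integrable sources, as shown in Proposition~\ref{prop:KS-bracket}.

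The zero initial trace on $\{\delta/8\le|x|\le2B_m\}$ comes from Lemma~\ref{lem:exterior-trace-direct}. Applied to $f^{\mathrm{lo}}$ and $f^{\mathrm{up}}$ with inner radius $\delta/8$, it gives $w^\#(t)\to0$ uniformly on bounded velocities. For large velocities, the bound $w\le U$ together with Lemma~\ref{lem:rescaled-high-flatness}, or simply \eqref{eq:gain-envelope}, shows the trace still vanishes, since $f_0$ is common to both limits. Lemma~\ref{lem:stopped-localization} with $n=N$ then yields
\[
 \bracket{v}^{m+2}w(s,x,v)\le Ct^N
 \quad\text{for } \delta/2\le|x|\le B_m .
\]

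To transfer this to the annulus, take $(y,v)\in\mathcal A_m$ with $|y|\ge\delta$ and $t\le\min\{1/2,\delta/8\}$. Then $|v|\le2$, so $x=y+tv$ satisfies $|x|\ge\delta-2t\ge\delta/2$ and $|x|\le B_m$. The fixed-time statement of Lemma~\ref{lem:stopped-localization} therefore gives $w^\#(t,y,v)=w(t,x,v)\le Ct^N$. Taking $\tau_{m,N}$ to be the minimum of all the time thresholds used, and combining the two parts, proves \eqref{eq:finite-gap-main}.

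The main obstacle is justifying the differential inequality for $w$ in the mild sense, and its zero trace, on the cylinder. One has to check that the difference of the two characteristic formulas, whose exponential weights differ, is absolutely continuous with derivative bounded by $\mathcal K_Uw$. I would handle this by differentiating each formula along the characteristic: both $A_{f^{\mathrm{lo}}}$ and $A_{f^{\mathrm{up}}}$ are integrable on $\Omega_0$ for $t>0$, so each $f^\#$ is absolutely continuous. The needed pointwise identity then holds almost everywhere in time, and the sign bounds above apply directly.
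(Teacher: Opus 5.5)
Your argument is correct and follows the paper's proof essentially step for step: you treat the inner region via Lemma~\ref{lem:inner-flatness-main} and $w\le U$, derive $(\partial_t+v\cdot\nabla_x)w\le\mathcal K_Uw$ in the outer region, get the zero trace from Lemma~\ref{lem:exterior-trace-direct}, and then apply Lemma~\ref{lem:stopped-localization} with $H=w$, $C_*=1$, $n=N$. Two small points. First, Lemma~\ref{lem:inner-flatness-main} is stated only for integer exponents, and $mN$ need not be an integer; take an integer $K\ge mN$ (the paper takes $K>mN$) and use $t^{K/m}\le t^N$ for $t\le1$. Second, your separate large-velocity discussion of the trace is unnecessary, and the envelope alone would not prove it: the trace is required only along each characteristic, whose velocity is fixed, so applying Lemma~\ref{lem:exterior-trace-direct} on bounded velocity sets already covers every characteristic.
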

\begin{proof}
Choose an integer $K>mN$. Apply Lemma~\ref{lem:inner-flatness-main} with
$N=K$, and decrease $\delta:=\delta_{m,K}$ to at most one. On the inner part
of $\mathcal A_m$, the bound $0\le w\le U$ gives
$w^\#=O(t^{K/m})=O(t^N)$.
On the exterior, subtract the two characteristic equations in
\eqref{eq:KS-coupled-limits-main}. Their source and loss terms are locally
integrable in the spatial annulus under consideration by
\eqref{eq:local-U-weighted} and
Lemma~\ref{lem:local-weighted-collision}. Thus $w$ is absolutely continuous
along the relevant characteristic segments and
\begin{align}
 (\partial_t+v\cdot\nabla_x)w
 &=\Qp(w,f^{\mathrm{up}})+\Qp(f^{\mathrm{lo}},w)
   -\nu_{f^{\mathrm{lo}}}w+\nu_wf^{\mathrm{lo}}\notag\\
 &\le\Qp(w,U)+\Qp(U,w)+\beta_{\mathbf{b}}\rho_wU
 =\mathcal K_Uw.
 \label{eq:positive-gap-inequality}
\end{align}
Here we used $w\ge0$ and
$0\le f^{\mathrm{lo}},f^{\mathrm{up}}\le U$. The difference has zero initial trace on
the closed annulus by
Lemma~\ref{lem:exterior-trace-direct}, applied on each bounded velocity set.
Lemma~\ref{lem:stopped-localization} with $H=w$, $C_*=1$, and $n=N$
therefore applies. If $(y,v)\in\mathcal A_m$ and $|y|\ge\delta$, then
$|y|\le2^m$, $|v|\le2$, and
$\delta/2\le|y+tv|\le B_m$ for small $t$.
This yields $w^\#\le C_{m,\delta,N}t^N$ on the exterior. Combining the two
regions proves the assertion.
\end{proof}
\section{Equality of the bracket limits and the main theorem}
\label{sec:closure}
The short-time estimate and self-similarity imply that
$w=f^{\mathrm{up}}-f^{\mathrm{lo}}\ge0$ has finite mass and a finite first
velocity moment. We can therefore integrate its equation over space and
velocity. This gives an upper bound for the growth of its mass, while
self-similarity gives an exact growth rate. For small $\eps$, the two are
compatible only if the mass of $w$ is zero. Hence
$f^{\mathrm{up}}=f^{\mathrm{lo}}$.

Fix an integer
\begin{equation}
 N_m>\frac{2(m+1)}{m-1}.
\end{equation}
Self-similarity of $f^{\mathrm{lo}}$ and $f^{\mathrm{up}}$ gives, with
\[
 \widetilde W(y,v):=w^\#(1,y,v),
\]
the exact formula
\begin{equation}
 w^\#(t,y,v)=t^{-(m+2)/(m-1)}
 \widetilde W\bigl(t^{-m/(m-1)}y,t^{-1/(m-1)}v\bigr).
 \label{eq:gap-selfsimilar-main}
\end{equation}
\begin{lemma}[Finite gap mass and absolute first velocity moment]
\label{lem:finite-gap-mass-main}
For every $t>0$,
\begin{equation}
 \int_{\R^6}(1+|v|)w(t,x,v)\,\dd x\dd v<\infty.
 \label{eq:finite-gap-mass-main}
\end{equation}
\end{lemma}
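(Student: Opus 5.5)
The plan is to reduce everything to time $t=1$ through the scaling law \eqref{eq:gap-selfsimilar-main}, and then to split phase space at time one into an anisotropic ball and its dyadic exterior. First I will check that the weighted integral scales as a power of $t$. In free-transport coordinates the change $(x,v)\mapsto(y,v)$ with $y=x-tv$ has unit Jacobian, so
\[
\int_{\R^6}(1+|v|)w(t,x,v)\,\dd x\dd v=\int_{\R^6}(1+|v|)w^\#(t,y,v)\,\dd y\dd v .
\]
Substituting \eqref{eq:gap-selfsimilar-main} and changing variables $Y=t^{-m/(m-1)}y$, $V=t^{-1/(m-1)}v$ produces the factor $t^{(2m+1)/(m-1)}$. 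The integrand becomes $(1+t^{1/(m-1)}|V|)\widetilde W(Y,V)$, and this is bounded by $C_t(1+|V|)\widetilde W$. Hence it suffices to prove
\[
\int_{\R^6}(1+|v|)\widetilde W(y,v)\,\dd y\dd v<\infty .
\]

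\textbf{Near region.} Consider the set $\{\varrho_m(y,v)\le 1\}$. There I use $0\le w\le U$ and \eqref{eq:gain-envelope}, which give $\widetilde W\le A_{0,m}\eps\,\varrho_m^{-(m+2)}$. To integrate this, decompose into the dyadic shells $\{2^{-k-1}<\varrho_m\le 2^{-k}\}$, $k\ge0$. Each is the image of $\mathcal A_m$-type shells under $(y,v)\mapsto(r^my,rv)$ with $r=2^{-k}$, whose volume factor is $r^{3m+3}$. Each shell therefore contributes $O(r^{3m+3}r^{-(m+2)})=O(r^{2m+1})$, which sums. On this region $|v|\le\varrho_m\le1$, so the weight $1+|v|$ is harmless. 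Equivalently, $\varrho_m^{-(m+2)}$ is locally integrable at the kinetic origin because the homogeneous dimension is $3m+3>m+2$.

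\textbf{Far region.} Here I invoke Proposition~\ref{prop:finite-gap-main} with $N=N_m$. For $(Y,V)$ with $2^{k-1}\le\varrho_m(Y,V)\le 2^{k+1}$, $k\ge1$, write $(Y,V)=(R^my,Rv)$ with $R=2^k$ and $(y,v)\in\mathcal A_m$. Self-similarity at the level of $w^\#$ (which holds for all $r$ on the characteristic representative) gives
\[
\widetilde W(R^my,Rv)=R^{-(m+2)}w^\#(R^{-(m-1)},y,v)\le C_mR^{-(m+2)-(m-1)N_m}
\]
once $R^{-(m-1)}\le\tau_{m,N_m}$. The finitely many dyadic shells below this threshold are covered by the envelope bound $A_{0,m}\eps\varrho_m^{-(m+2)}$ on a bounded set.

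Each shell has volume $\sim R^{3m+3}$, and on it $1+|v|\le 1+\varrho_m\le 3R$. Its contribution is therefore
\[
O\bigl(R^{3m+4-(m+2)-(m-1)N_m}\bigr)=O\bigl(R^{2m+2-(m-1)N_m}\bigr),
\]
and the exponent is negative exactly because $N_m>2(m+1)/(m-1)$. The geometric sum over $k$ converges. Tonelli's theorem justifies all the changes of variables and rearrangements, since the integrands are nonnegative.

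\textbf{Main obstacle.} I expect the only delicate point to be a measure-theoretic one. Proposition~\ref{prop:finite-gap-main} gives an essential supremum over $\mathcal A_m$ at each fixed small time. I must transfer this to an almost-everywhere bound for $\widetilde W$ on each dilated shell. I will do this using the fact that the characteristic representatives satisfy the scaling identity for all $r>0$ simultaneously, and that the dilation is a measure-class-preserving diffeomorphism. A null set in $\mathcal A_m$ is thus mapped to a null set in the shell, which is all the integral bound requires.
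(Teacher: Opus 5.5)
Your proposal is correct and follows the paper's proof essentially step by step. You reduce to time one by scaling, bound $\widetilde W$ by the envelope $w\le U$ near the kinetic origin, and transfer Proposition~\ref{prop:finite-gap-main} with $N=N_m$ to the far dyadic shells through self-similarity at countably many scales. You treat the finitely many intermediate shells with the envelope bound and sum using the volume factor $R^{3m+3}$, $|v|\le\varrho_m$ and $N_m>2(m+1)/(m-1)$. The only cosmetic difference is that you sum over dyadic shells where the paper writes the equivalent radial integrals $\int r^{3m+2}\cdots\,\dd r$.
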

\begin{proof}
Take a fixed dyadic scale
$r=2^j\ge R_1:=\max\{1,\tau_{m,N_m}^{-1/(m-1)}\}$ and
$(y,v)\in\mathcal A_m$. With $t=r^{-(m-1)}$, equations
\eqref{eq:gap-selfsimilar-main} and \eqref{eq:finite-gap-main} give,
for almost every $(y,v)\in\mathcal A_m$,
\[
 \widetilde W(r^my,rv)
 =r^{-(m+2)}w^\#(r^{-(m-1)},y,v)
 \leq C_{m,N_m}r^{-m-2-(m-1)N_m}.
\]
The dilated annuli cover the far field. Since only countably many fixed
scales are used, these estimates give
$\widetilde W\lesssim
\varrho_m^{-m-2-(m-1)N_m}$ almost everywhere there.
On the remaining bounded annulus $1\le\varrho_m\le2R_1$, use the original
pointwise bound and
\[
 \varrho_m^{-(m+2)}
 \leq(2R_1)^{(m-1)N_m}
 \varrho_m^{-m-2-(m-1)N_m}.
\]
After enlarging the constant, the same decay holds for $\varrho_m\ge1$.
For $\varrho_m\leq1$, use $w\leq U$ and \eqref{eq:gain-envelope} to obtain
$\widetilde W\lesssim\eps\varrho_m^{-(m+2)}$.
The volume scaling for $(y,v)\mapsto(r^my,rv)$ has exponent $3m+3$.
Thus an anisotropic annulus of radius $r$ has volume comparable to
$r^{3m+3}$.  Since $|v|\leq\varrho_m(y,v)$,
\begin{align*}
 \int_0^1r^{3m+2}r^{-(m+2)}\,\dd r&<\infty,\\
 \int_1^\infty r^{3m+2}r^{-m-2-(m-1)N_m}\,\dd r&<\infty,\\
 \int_1^\infty r^{3m+2}r\,r^{-m-2-(m-1)N_m}\,\dd r&<\infty.
\end{align*}
The first integral is finite because $m>1$, and the last two are finite by
the choice of $N_m$. This proves the assertion at time one; scaling
gives every $t>0$.
\end{proof}
\begin{lemma}[Distributional balance for the bracket gap]
\label{lem:gap-balance-justification}
On $(0,\infty)\times\R^3_x$, one has in distributions
\begin{equation}
 \partial_t\rho_w+\nabla_x\cdot j_w
 =\beta_{\mathbf{b}}
  (\rho_{f^{\mathrm{up}}}+\rho_{f^{\mathrm{lo}}})\rho_w,
 \qquad
 j_w:=\int_{\R^3}vw\,\dd v.
 \label{eq:gap-density-balance-main}
\end{equation}
Moreover, for every $0<\tau<T<\infty$,
\begin{equation}
 \int_\tau^T\int_{\R^3}
 \bigl((\rho_{f^{\mathrm{up}}}+\rho_{f^{\mathrm{lo}}})\rho_w+|j_w|\bigr)
 \,\dd x\dd t<\infty.
 \label{eq:gap-balance-integrability}
\end{equation}
\end{lemma}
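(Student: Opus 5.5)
The plan is to derive \eqref{eq:gap-density-balance-main} by testing the characteristic formulas for $f^{\mathrm{up}}$ and $f^{\mathrm{lo}}$ with $\psi(t,x)$ independent of $v$, and to prove \eqref{eq:gap-balance-integrability} from Lemma~\ref{lem:finite-gap-mass-main} and the density envelope \eqref{eq:density-envelope}.

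\emph{Integrability.} For $\tau\le t\le T$, the bound \eqref{eq:density-envelope} gives $\rho_{f^{\mathrm{up}}}+\rho_{f^{\mathrm{lo}}}\le2\rho_U\le2A_{\rho,m}\eps\tau^{-1}$ uniformly in $x$. Hence
\[
 \int_{\R^3}(\rho_{f^{\mathrm{up}}}+\rho_{f^{\mathrm{lo}}})\rho_w\,\dd x
 \le C\tau^{-1}\int_{\R^6}w(t)\,\dd x\dd v .
\]
We also have $\int|j_w|\,\dd x\le\int|v|w(t)\,\dd x\dd v$. By the self-similar formula \eqref{eq:gap-selfsimilar-main}, both moments are explicit powers of $t$ times their values at $t=1$. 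These values are finite by Lemma~\ref{lem:finite-gap-mass-main}, so the powers are bounded on $[\tau,T]$ and integrating in time gives \eqref{eq:gap-balance-integrability}.

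\emph{Balance law.} Write the kinetic equation for $w$ with the exact right-hand side from \eqref{eq:positive-gap-inequality}:
\[
 \Qp(w,f^{\mathrm{up}})+\Qp(f^{\mathrm{lo}},w)-\nu_{f^{\mathrm{lo}}}w+\nu_wf^{\mathrm{lo}}.
\]
This identity holds along almost every nonzero characteristic, not only on the annulus, since all source and loss terms are dominated by $\Qp(U,U)$ and $\nu_UU$. These are integrable along characteristics, as shown in the proof of Proposition~\ref{prop:KS-bracket}. Take $\psi\in C_c^\infty((0,\infty)\times\R^3)$. Multiply the characteristic form by $\partial_t\psi+v\cdot\nabla_x\psi$ evaluated along $(t,y+tv)$. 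Integrate by parts in $t$ along each characteristic, which is legitimate because $w^\#$ is absolutely continuous there. Then integrate in $(y,v)$ and change variables $x=y+tv$, which preserves measure. This yields the weak form
\[
 -\iint w\,(\partial_t\psi+v\cdot\nabla_x\psi)
 =\iint \psi\bigl[\Qp(w,f^{\mathrm{up}})+\Qp(f^{\mathrm{lo}},w)-\nu_{f^{\mathrm{lo}}}w+\nu_wf^{\mathrm{lo}}\bigr].
\]
We then integrate in $v$. By the weak form of the gain operator for Maxwell molecules, $\int\Qp(F,G)\,\dd v=\beta_{\mathbf b}\rho_F\rho_G$; this follows from the pre/post-collisional change of variables with unit Jacobian, or from Lemma~\ref{lem:angular-integration}. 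The right side therefore reduces to
\[
 \beta_{\mathbf b}\bigl(\rho_w\rho_{f^{\mathrm{up}}}+\rho_{f^{\mathrm{lo}}}\rho_w-\rho_{f^{\mathrm{lo}}}\rho_w+\rho_w\rho_{f^{\mathrm{lo}}}\bigr)
 =\beta_{\mathbf b}(\rho_{f^{\mathrm{up}}}+\rho_{f^{\mathrm{lo}}})\rho_w ,
\]
which is \eqref{eq:gap-density-balance-main}.

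\emph{Main obstacle.} The difficulty is justifying Fubini and the splitting of the right-hand side into individually integrable pieces. On $\operatorname{supp}\psi$ we need $\iint(1+|v|)w<\infty$, which follows from Lemma~\ref{lem:finite-gap-mass-main} and the time scaling. We also need $\iint\Qp(w,U)+\Qp(U,w)+\nu_Uw+\nu_wU<\infty$ over $\operatorname{supp}\psi\times\R^3_v$. By the velocity identity and Tonelli, each of these equals a multiple of $\int\rho_U\rho_w\,\dd x$, which is finite by the integrability bound above. All four terms are then absolutely integrable, so their signed combination may be rearranged. The only remaining care is the null set of characteristics through the kinetic origin, which does not affect any of the integrals.
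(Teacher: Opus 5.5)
Your proposal is correct and follows the paper's proof. Both arguments rest on the same ingredients:
\begin{itemize}
\item the gain--mass identity $\int\Qp(F,G)\,\dd v=\beta_{\mathbf b}\rho_F\rho_G$;
\item integrability of $(1+|v|)w$ on $[\tau,T]\times\R^6$, which comes from Lemma~\ref{lem:finite-gap-mass-main} and self-similarity;
\item the bound $\int|S|\,\dd v\le4\beta_{\mathbf b}\rho_U\rho_w$ together with $\rho_U\le A_{\rho,m}\eps/t$.
\end{itemize}
The only difference is technical. The paper first obtains the distributional equation for $w$ with localized test functions and then removes a velocity cutoff $\zeta_L$ by dominated convergence. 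You instead integrate by parts along each characteristic and apply Fubini directly over all of $\R^6$ with a $v$-independent $\psi$, which those same integrability bounds justify.
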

\begin{proof}
On compact time intervals away from zero,
$f^{\mathrm{lo}},f^{\mathrm{up}}\leq U$ and
Lemma~\ref{lem:density-envelope} give finite local velocity densities.  The
integrated gain and loss terms are locally integrable because they are
bounded by a constant multiple of $\rho_U^2$.  Testing the characteristic
formulas therefore shows that the two equations in
\eqref{eq:KS-coupled-limits-main} also hold in distributions.  Subtracting
them gives
\begin{equation}
 (\partial_t+v\cdot\nabla_x)w
 =\Qp(w,f^{\mathrm{up}})+\Qp(f^{\mathrm{lo}},w)
  -\nu_{f^{\mathrm{lo}}}w+\nu_wf^{\mathrm{lo}}.
 \label{eq:gap-equation-main}
\end{equation}
For nonnegative $F,G$ with finite velocity densities, Tonelli's theorem and
the pre--post-collisional change of variables give
\begin{equation}
 \int_{\R^3}\Qp(F,G)(v)\,\dd v
 =\beta_{\mathbf{b}}\rho_F\rho_G.
 \label{eq:gain-mass-identity}
\end{equation}
In center-of-mass and relative-velocity variables, this change
interchanges the two unit directions $\widehat{v-v_*}$ and $\sigma$; it
preserves the measure and leaves
$\mathbf{b}(\widehat{v-v_*}\cdot\sigma)$ unchanged.

We justify the velocity integration in \eqref{eq:gap-equation-main}.
Fix $0<\tau<T$. Define
$M(t)=\int_{\R^6}w(t,x,v)\,\dd x\dd v$.
Lemma~\ref{lem:finite-gap-mass-main} and
\eqref{eq:gap-selfsimilar-main} give
\[
 M(t)=t^{(2m+1)/(m-1)}M(1),
 \qquad
 \int_{\R^6}|v|w(t,x,v)\,\dd x\dd v
 =t^{(2m+2)/(m-1)}
 \int_{\R^6}|V|\widetilde W(X,V)\,\dd X\dd V.
\]
Thus $(1+|v|)w$ is integrable over $[\tau,T]\times\R^6$.
These scaling identities use only the already established
self-similarity and finite moments.

Write the right-hand side of \eqref{eq:gap-equation-main} as
$S=\Qp(w,f^{\mathrm{up}})+\Qp(f^{\mathrm{lo}},w)
-\nu_{f^{\mathrm{lo}}}w+\nu_wf^{\mathrm{lo}}$.
Positivity and \eqref{eq:gain-mass-identity} yield
\[
 \int_{\R^3}|S(t,x,v)|\,\dd v
 \leq\beta_{\mathbf{b}}
 (\rho_{f^{\mathrm{up}}}+3\rho_{f^{\mathrm{lo}}})\rho_w
 \leq4\beta_{\mathbf{b}}\rho_U\rho_w.
\]
Since $\rho_U(t,x)\leq A_{\rho,m}\eps/t$ by
\eqref{eq:density-envelope}, it follows that
\[
 \int_\tau^T\int_{\R^6}|S|\,\dd x\dd v\dd t
 \leq4\beta_{\mathbf{b}}A_{\rho,m}\eps
 \int_\tau^T t^{-1}M(t)\,\dd t<\infty.
\]

Choose $\zeta\in C_c^\infty(\R^3)$ with $0\leq\zeta\leq1$ and
$\zeta=1$ on the unit ball, and put $\zeta_L(v)=\zeta(v/L)$.
For $\psi\in C_c^\infty((\tau,T)\times\R^3_x)$, test
\eqref{eq:gap-equation-main} against $\psi(t,x)\zeta_L(v)$:
\[
 -\int_\tau^T\int_{\R^6}
 w\zeta_L(\partial_t\psi+v\cdot\nabla_x\psi)\,\dd x\dd v\dd t
 =\int_\tau^T\int_{\R^6}S\psi\zeta_L\,\dd x\dd v\dd t.
\]
The integrable bounds for $(1+|v|)w$ and $|S|$ allow $L\to\infty$
by dominated convergence. The transport operator contains no velocity
derivative, so it produces no derivative of $\zeta_L$.
Using \eqref{eq:gain-mass-identity}, the integral of
$-\nu_{f^{\mathrm{lo}}}w$ cancels that of
$\Qp(f^{\mathrm{lo}},w)$, while $\nu_wf^{\mathrm{lo}}$ contributes
$\beta_{\mathbf{b}}\rho_{f^{\mathrm{lo}}}\rho_w$. This proves
\eqref{eq:gap-density-balance-main}.

Finally,
\[
 \int_{\R^3}
 (\rho_{f^{\mathrm{up}}}+\rho_{f^{\mathrm{lo}}})\rho_w\,\dd x
 \leq2A_{\rho,m}\eps t^{-1}M(t),
 \qquad
 \int_{\R^3}|j_w|\,\dd x
 \leq\int_{\R^6}|v|w\,\dd x\dd v.
\]
Both right-hand sides are integrable on $[\tau,T]$, proving
\eqref{eq:gap-balance-integrability}.
\end{proof}
\begin{theorem}[Closure of the bracket]
\label{thm:gap-closure-main}
If, in addition to the smallness required in
Proposition~\ref{prop:gain-only},
\begin{equation}
 2\beta_{\mathbf{b}}A_{\rho,m}\eps<\frac{2m+1}{m-1},
 \label{eq:mass-smallness-main}
\end{equation}
then $f^{\mathrm{lo}}=f^{\mathrm{up}}$ almost everywhere.
\end{theorem}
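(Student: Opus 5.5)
The plan is to integrate the distributional balance of Lemma~\ref{lem:gap-balance-justification} over space, which gives a differential inequality for $M(t)=\int_{\R^6}w(t,x,v)\,\dd x\dd v$. We then compare it with the exact power law $M(t)=t^{(2m+1)/(m-1)}M(1)$, which follows from \eqref{eq:gap-selfsimilar-main} and Lemma~\ref{lem:finite-gap-mass-main}. Since $w\ge0$, showing $M(1)=0$ gives $w=0$ almost everywhere at time one, and self-similarity then gives $w=0$ at every time.

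First we remove the spatial test function. Take $\psi(t,x)=\theta(t)\chi(x/L)$, where $\theta\in C_c^\infty((\tau,T))$ and $\chi\in C_c^\infty(\R^3)$ equals $1$ on the unit ball. Insert this into \eqref{eq:gap-density-balance-main}. The flux term is $L^{-1}\int\theta\, j_w\cdot\nabla\chi(x/L)$, and it tends to zero by \eqref{eq:gap-balance-integrability}. The other terms converge by dominated convergence, using the same integrability bound. This yields, in $\mathcal D'((0,\infty))$,
\[
 M'(t)=\beta_{\mathbf b}\int_{\R^3}(\rho_{f^{\mathrm{up}}}+\rho_{f^{\mathrm{lo}}})\rho_w\,\dd x
 \le 2\beta_{\mathbf b}A_{\rho,m}\eps\,t^{-1}M(t).
\]
The inequality uses $f^{\mathrm{lo}},f^{\mathrm{up}}\le U$ together with the bound $\rho_U\le A_{\rho,m}\eps/t$ from \eqref{eq:density-envelope}.

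Next we use the power law. Because $M(t)=t^{\kappa}M(1)$ with $\kappa=(2m+1)/(m-1)$, $M$ is smooth and $M'(t)=\kappa t^{-1}M(t)$ classically. Comparing this with the distributional inequality, and testing against a nonnegative $\theta$, gives $(\kappa-2\beta_{\mathbf b}A_{\rho,m}\eps)\int\theta(t)t^{\kappa-1}\,\dd t\,M(1)\le0$. Under \eqref{eq:mass-smallness-main} the prefactor is positive, so $M(1)\le0$. Since also $M(1)\ge0$, we get $M(1)=0$. Because $w\ge0$ and $w^\#$ is measurable, $w(1,\cdot,\cdot)=0$ almost everywhere. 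Self-similarity \eqref{eq:gap-selfsimilar-main} then gives $w=0$ almost everywhere on $(0,\infty)\times\R^6$, which means $f^{\mathrm{lo}}=f^{\mathrm{up}}$.

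The main obstacle is the passage to a global-in-space identity, namely controlling the flux at spatial infinity. Lemma~\ref{lem:finite-gap-mass-main} supplies the first-moment bound that makes $j_w\in L^1$, so this step should go through. A secondary point is that the identity $M(t)=t^{\kappa}M(1)$ must be used with the same representative as in the distributional statement. This is harmless, because Fubini's theorem identifies $M$ almost everywhere in $t$ with the integral of the characteristic representative.
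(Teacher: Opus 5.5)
Your proposal is correct and follows essentially the same route as the paper. You test the gap balance with a spatial cutoff, remove the flux using the first-moment bound, bound $M'$ by $2\beta_{\mathbf{b}}A_{\rho,m}\eps\, t^{-1}M$, and compare with the exact scaling $M(t)=t^{(2m+1)/(m-1)}M(1)$. Your only departure is that you test the comparison against a nonnegative $\theta$ instead of first asserting local absolute continuity of $M$, which is a slightly more explicit version of the paper's final step.
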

\begin{proof}
Let $\chi\in C_c^\infty(\R^3)$ satisfy $0\leq\chi\leq1$ and
$\chi=1$ on the unit ball, and put $\chi_R(x)=\chi(x/R)$.  Testing
\eqref{eq:gap-density-balance-main} with $\chi_R$ gives on every compact
time interval $[\tau,T]\Subset(0,\infty)$
\[
 \frac{\dd}{\dd t}\int_{\R^3}\chi_R\rho_w\,\dd x
 =\beta_{\mathbf{b}}\int_{\R^3}\chi_R
 (\rho_{f^{\mathrm{up}}}+\rho_{f^{\mathrm{lo}}})\rho_w\,\dd x
 +\int_{\R^3}\nabla\chi_R\cdot j_w\,\dd x
\]
in distributions in time.  By \eqref{eq:gap-balance-integrability},
\[
 \int_\tau^T\left|\int_{\R^3}\nabla\chi_R\cdot j_w\,\dd x\right|\dd t
 \leq\frac{C}{R}\int_\tau^T\int_{\R^3}|j_w|\,\dd x\dd t
 \longrightarrow0.
\]
Dominated convergence in the remaining terms shows that
\[
 M(t):=\int_{\R^6}w(t,x,v)\,\dd x\dd v
\]
is locally absolutely continuous and
\begin{equation}
 M'(t)=\beta_{\mathbf{b}}\int_{\R^3}
 (\rho_{f^{\mathrm{up}}}+\rho_{f^{\mathrm{lo}}})\rho_w\,\dd x
 \leq\frac{2\beta_{\mathbf{b}}A_{\rho,m}\eps}{t}M(t).
 \label{eq:gap-mass-inequality-main}
\end{equation}
On the other hand, \eqref{eq:gap-selfsimilar-main} and the Jacobian
$\dd x\dd v=t^{(3m+3)/(m-1)}\dd X\dd V$ give
\begin{equation}
 M(t)=t^{(2m+1)/(m-1)}M(1),
 \qquad M'(t)=\frac{2m+1}{(m-1)t}M(t).
 \label{eq:gap-mass-scaling-main}
\end{equation}
Combining \eqref{eq:mass-smallness-main}--
\eqref{eq:gap-mass-scaling-main} forces $M(t)=0$.  Since $w\geq0$, we obtain
$w=0$ almost everywhere.
\end{proof}
\begin{proof}[Proof of Theorem~\ref{thm:main}]
Choose $\eps_0$ so that the contraction in
Proposition~\ref{prop:gain-only} holds for $\eps\le\eps_0$ and
$2\beta_{\mathbf{b}}A_{\rho,m}\eps_0<(2m+1)/(m-1)$.
Proposition~\ref{prop:KS-bracket} and
Theorem~\ref{thm:gap-closure-main} give
$f:=f^{\mathrm{lo}}=f^{\mathrm{up}}$ almost everywhere.
On almost every characteristic, the two limit formulas have the same
coefficients and source and their continuous representatives coincide.
We choose the representative of $f^{\mathrm{lo}}$ fixed above; it satisfies
\eqref{eq:mild-definition} almost everywhere. Since $0\le f\le U$,
the characteristic integrability required in the definition follows
from the bounds proved in Proposition~\ref{prop:KS-bracket}.
The scaling and bounds
follow from Propositions~\ref{prop:gain-only} and~\ref{prop:KS-bracket}
and Lemma~\ref{lem:density-envelope}.
Let $K\Subset\R^6\setminus\{(0,0)\}$. Its anisotropic radius is bounded
above and away from zero. Choose finitely many fixed dilations $r_\nu>0$
and a covering $K=\bigcup_{\nu=1}^LK_\nu$ such that
\begin{equation}
 (y,v)\in K_\nu\quad\Longrightarrow\quad
 \frac12\le\varrho_m(r_\nu^my,r_\nu v)\le2.
 \label{eq:finite-trace-scale-cover}
\end{equation}
For $\delta>0$ sufficiently small there is $a>0$ such that
$|v|\ge a$ on $K\cap\{|y|\le\delta\}$. Decrease $\delta$ so that
$R_0^m\delta<a^m$ and $r_\nu^m\delta\le\delta_{m,1}$ for every $\nu$,
where $\delta_{m,1}$ is given by
Lemma~\ref{lem:inner-flatness-main} with $N=1$. The conic support gives
$f_0=0$ on this inner set. Scaling $U$ by each fixed $r_\nu$ and applying
that lemma gives
\[
 0\le f^\#(t,y,v)\le U^\#(t,y,v)
 \le C_{m,K}\eps^2t^{1/m}
 \quad\text{on }K\cap\{|y|\le\delta\}.
\]
On $K\cap\{|y|\ge\delta\}$, the trace convergence follows directly
from Lemma~\ref{lem:exterior-trace-direct}. Hence
\eqref{eq:main-trace} holds.
Since $\phi\ge0$ and $\phi\not\equiv0$, the datum is strictly positive
on a set of positive measure away from the origin. The integral of the
collision frequency is finite on its free characteristics, and the first term in
\eqref{eq:mild-definition} is positive there. Thus $f\not\equiv0$.
The positive-time infinite-mass assertion is proved in
Proposition~\ref{prop:positive-time-infinite-mass} below.
\end{proof}
\section{The self-similar profile and far trace}
\label{sec:profile}
We write the solution in terms of its profile at time one. We then show
how the initial trace determines the behavior of this profile at infinity
and prove that the solution has infinite mass at every positive time.

Define $F(X,V)=f(1,X,V)$. In free-transport coordinates, the profile is
\[
 \mathcal F(Y,V):=F(Y+V,V)=f^\#(1,Y,V).
\]
\begin{corollary}[Profile formulation]
\label{cor:profile}
The profile $F$ is a nonnegative nonzero distributional solution of
\eqref{eq:profile-equation}.  It satisfies
\begin{equation}
 0\leq\mathcal F(Y,V)\leq C_m\eps\varrho_m(Y,V)^{-(m+2)}
 \label{eq:profile-envelope}
\end{equation}
and the anisotropic far-trace condition
\begin{equation}
 r^{m+2}\mathcal F(r^my,rv)\longrightarrow f_0(y,v)
 \quad(r\to\infty)
 \label{eq:profile-far-trace}
\end{equation}
in $L^\infty_{\mathrm{loc}}(\R^6\setminus\{(0,0)\})$.
\end{corollary}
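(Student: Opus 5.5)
The corollary repackages Theorem~\ref{thm:main} in self-similar variables. Nonnegativity and the envelope bound \eqref{eq:profile-envelope} follow by evaluating \eqref{eq:main-envelope} at $t=1$, since $\mathcal F(Y,V)=f^\#(1,Y,V)$. Nontriviality follows from $f\not\equiv0$ and self-similarity. Indeed, if $F=0$ almost everywhere, then \eqref{eq:profile-ansatz} would make $f=0$ almost everywhere at every $t>0$.

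For the far-trace condition \eqref{eq:profile-far-trace}, I use the exact dilation identity \eqref{eq:main-selfsimilarity} in free-transport coordinates. Take $r>0$ and apply invariance with parameter $r^{-1}$ at time $1$; equivalently, apply invariance with parameter $r$ at time $t=r^{-(m-1)}$. This gives
\[
 r^{m+2}f^\#(1,r^my,rv)=f^\#(r^{-(m-1)},y,v),
\]
because the free foot transforms consistently: $r^mx-1\cdot rv=r^m(x-r^{-(m-1)}v)$. Thus $r^{m+2}\mathcal F(r^my,rv)=f^\#(r^{-(m-1)},y,v)$. Letting $r\to\infty$ sends the time to $0^+$, and \eqref{eq:main-trace} gives convergence to $f_0$ in $L^\infty_{\mathrm{loc}}(\R^6\setminus\{(0,0)\})$. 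One point needs care: the identity must hold for the characteristic representative at every $r$ simultaneously, not merely almost everywhere. The paper's representatives are invariant for all $r$ at once, so this holds.

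The main work is showing that $F$ solves \eqref{eq:profile-equation} distributionally. Since $f$ is a mild solution, I first check that it is a distributional solution of \eqref{eq:Boltzmann-intro} on $(0,\infty)\times\R^6$. This is done by testing the characteristic formula \eqref{eq:mild-definition} against $\psi(t,x,v)\in C_c^\infty$, as in the proof of Lemma~\ref{lem:gap-balance-justification}. Local integrability of $\Qp(f,f)$ and $\nu_ff$ away from $t=0$ follows from $f\le U$, from Lemma~\ref{lem:density-envelope}, and from the bound of the gain by $\rho_U$-weighted quantities in Lemma~\ref{lem:local-weighted-collision}. Then I insert the ansatz \eqref{eq:profile-ansatz} and choose test functions of the form
\[
 \psi(t,x,v)=\theta(t)\,t^{a}\,\Psi\bigl(x/t^{m/(m-1)},v/t^{1/(m-1)}\bigr),
\]
with $\theta\in C_c^\infty(0,\infty)$ and a suitable power $a$. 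I change variables to $(X,V)$ and use the scaling of $Q$ computed in the introduction, namely $Q(f,f)(t,x,v)=t^{-(2m+1)/(m-1)}Q(F,F)(X,V)$.

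The time derivative of $F(x/t^{m/(m-1)},v/t^{1/(m-1)})$ produces the drift terms $-\frac{m}{m-1}X\cdot\nabla_X-\frac1{m-1}V\cdot\nabla_V$, and the prefactor produces $-\frac{m+2}{m-1}F$. Integrating out $\theta$, with $\int\theta\neq0$, yields the weak form of \eqref{eq:profile-equation} tested against $\Psi$. I expect the main obstacle to be the integrability bookkeeping for $Q(F,F)$ against $\Psi\in C_c^\infty(\R^6)$ near $(X,V)=(0,0)$, where $\mathcal F$ is singular. I would handle it using $\rho_F(X)\le C\eps\bracket{X}^{-(m-1)/m}$ from \eqref{eq:main-density}, which is bounded, together with the estimate of the gain by $\rho$ times the weighted supremum from Lemma~\ref{lem:local-weighted-collision}. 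Where that estimate is insufficient near the origin, I would use the $L^1_{\mathrm{loc}}$ integrability of $\int_0^T\Qp$ along characteristics transferred by Fubini.
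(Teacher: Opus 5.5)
Your proposal is correct and follows the paper's argument. The envelope is \eqref{eq:main-envelope} at $t=1$. The far trace comes from $r^{m+2}\mathcal F(r^my,rv)=f^\#(r^{-(m-1)},y,v)$ together with \eqref{eq:main-trace}. The profile equation comes from substituting \eqref{eq:profile-ansatz} into the distributional form of the mild equation, which you carry out in more detail than the paper does.

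For local integrability of the gain, the weighted supremum bound of Lemma~\ref{lem:local-weighted-collision} is indeed insufficient near $x=0$, as you suspected. The paper uses the more direct identity $\int_{\R^3}\Qp(f,f)\,\dd v=\beta_{\mathbf{b}}\rho_f^2\le\beta_{\mathbf{b}}\rho_U^2$ from \eqref{eq:gain-mass-identity}. Your Fubini fallback along characteristics also works.
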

\begin{proof}
The profile representation \eqref{eq:profile-ansatz} follows from
\eqref{eq:main-selfsimilarity}.  The envelope and density bounds imply that
$f$ and the velocity integrals of its gain and loss terms are locally
integrable; hence the characteristic-mild equation holds distributionally.
Substitution of the profile representation in that distributional equation
gives \eqref{eq:profile-equation}.  The envelope is
\eqref{eq:main-envelope} at time one. Taking $t=r^{-(m-1)}$ in the identity
\[
 f^\#(t,y,v)=t^{-(m+2)/(m-1)}
 \mathcal F\bigl(t^{-m/(m-1)}y,t^{-1/(m-1)}v\bigr)
\]
and using \eqref{eq:main-trace} gives \eqref{eq:profile-far-trace}.
\end{proof}
\begin{proposition}[Infinite mass at positive times]
\label{prop:positive-time-infinite-mass}
For the solution constructed above,
\[
 \int_{\R^6}f(t,x,v)\,\dd x\dd v=\infty
 \qquad\text{for every }t>0.
\]
\end{proposition}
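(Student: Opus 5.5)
The argument rests on exact self-similarity, which reduces everything to time one. Set $F(X,V)=f(1,X,V)$. By \eqref{eq:profile-ansatz} and the Jacobian $\dd x\dd v=t^{(3m+3)/(m-1)}\dd X\dd V$,
\[
 \int_{\R^6}f(t,x,v)\,\dd x\dd v=t^{(2m+1)/(m-1)}\int_{\R^6}F\,\dd X\dd V .
\]
So it suffices to show that $\int F=\infty$. Equivalently, since $(Y,V)\mapsto(Y+V,V)$ preserves measure, it suffices to show $\int\mathcal F=\infty$ with $\mathcal F=f^\#(1,\cdot,\cdot)$.

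The plan is to use the far-trace convergence \eqref{eq:profile-far-trace} from Corollary~\ref{cor:profile}, or equivalently \eqref{eq:main-trace} combined with scaling. Fix a compact set $K\Subset\R^6\setminus\{(0,0)\}$ contained in the anisotropic annulus $\{1\le\varrho_m\le2^{1/m}\cdot\text{const}\}$, chosen so that $c_K:=\int_K f_0\,\dd y\dd v>0$. Such a $K$ exists: $\phi\not\equiv0$, so $f_0$ is positive on a set of positive measure away from the origin, and by homogeneity \eqref{eq:trace-homogeneity} some annular shell carries positive mass.

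By \eqref{eq:main-trace} there is $t_K>0$ such that
\[
 \int_K f^\#(t,y,v)\,\dd y\dd v\ge c_K/2 \qquad\text{for } 0<t\le t_K .
\]
This holds because $L^\infty(K)$ convergence on a set of finite measure gives $L^1(K)$ convergence.

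Next translate this into a bound on $\mathcal F$. By self-similarity,
\[
 f^\#(t,y,v)=t^{-(m+2)/(m-1)}\mathcal F\bigl(t^{-m/(m-1)}y,\,t^{-1/(m-1)}v\bigr).
\]
Substituting $r=t^{-1/(m-1)}$ and changing variables gives
\[
 \int_{D_rK}\mathcal F\,\dd Y\dd V=r^{(3m+3)-(m+2)}\int_K f^\#(t,\cdot)\ge \tfrac12 c_K\,r^{2m+1},
\]
where $D_r(y,v)=(r^my,rv)$. Now take the dyadic scales $r=2^{j}$ with $2^{-j(m-1)}\le t_K$. Choose $K$ thin enough in $\varrho_m$, for instance $K\subset\{1\le\varrho_m<2\}$, which is always possible by homogeneity. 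Then the sets $D_{2^j}K$ are pairwise disjoint, because they lie in disjoint anisotropic shells. Summing over $j$ gives
\[
 \int\mathcal F\ge\sum_j\tfrac12c_K2^{j(2m+1)}=\infty .
\]

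The step needing the most care is the choice of $K$. It must have positive $f_0$-mass, lie in a single shell $1\le\varrho_m<2$ so that the dilates are disjoint, and be compact away from the origin so that \eqref{eq:main-trace} applies. To get it, take the shell $\{1\le\varrho_m\le 3/2\}$ intersected with $\{|y|\ge\delta\}$ for small $\delta$. This set is compact and avoids the origin, and it has positive $f_0$-mass for small $\delta$ because $f_0$ has positive mass on the shell: shells at all scales carry mass proportional to $r^{2m+1}$ times a fixed positive constant, computed as in the $L^p_{\mathrm{loc}}$ computation with $p=1$. Since $f_0=0$ for $y=0$ and the mass over $\{|y|<\delta\}$ tends to zero, some $\delta>0$ works. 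An alternative route avoids the trace altogether: use the lower bound $f^\#(t)\ge e^{-A_f(0,t)}f_0$ from \eqref{eq:mild-definition}, with $A_f\le C_K\eps t$ on such $K$ by the proof of Lemma~\ref{lem:exterior-trace-direct}. This gives the same $L^1(K)$ lower bound directly.
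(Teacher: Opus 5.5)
Your argument is correct, but it takes a different route from the paper's.

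\textbf{The paper's route.} The paper works at a fixed time $t>0$ and goes to spatial infinity. On the conic support of $f_0$ one has $|v|\le R_0|y|^{1/m}$. For $|y|$ large, the free line therefore stays in $\{|x|\ge|y|/2\}$, where the density envelope gives $\rho_U\le A_{\rho,m}\eps|x|^{-(m-1)/m}$. Hence $A_f(0,t;y,v)\le C_m\eps t|y|^{-(m-1)/m}$, and keeping only the first term of the mild formula gives $f^\#(t,y,v)\ge\frac12 f_0(y,v)$ there. The proof ends with the divergence at infinity of $\int\rho_{f_0}\,\dd y$, where $\rho_{f_0}(y)=\eps|y|^{-(m-1)/m}\int\phi(\widehat y,z)\,\dd z$. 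This argument uses neither self-similarity nor the trace convergence \eqref{eq:main-trace}. It needs only $f\le U$ and the envelope, so it applies to any mild solution lying below $U$, and it gives an explicit pointwise lower bound in the far field.

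\textbf{Your route.} You fix a compact shell away from the kinetic origin. You then use the attained trace, or in your alternative the bound $A_f\le C_K\eps t$ from Lemma~\ref{lem:exterior-trace-direct}, to obtain $\int_K f^\#(t)\ge c_K/2$ for small $t$. Exact self-similarity then transports this to large scales. The kinetic dilation relates the two proofs: it turns small times on a fixed set into the far field at a fixed time.

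\textbf{What each buys.} Your version isolates a structural fact. An exactly self-similar solution satisfies $\int f(t)=t^{(2m+1)/(m-1)}\int F$. Finite mass would therefore force $\int_K f^\#(t)\to0$ as $t\downarrow0$, which is incompatible with attaining a nonzero trace in $L^1_{\mathrm{loc}}$ away from the origin. The paper's version gives more concrete information at each fixed time, but does not expose this scaling obstruction.

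\textbf{A simplification.} The dyadic disjointness and the summation are unnecessary. A single dilate already gives $\int\mathcal F\ge\int_{D_rK}\mathcal F\ge\frac12 c_K r^{2m+1}$ for every large $r$, and this tends to infinity.
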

\begin{proof}
Fix $t>0$. On the support of $f_0(y,v)$ one has
$|v|\le R_0|y|^{1/m}$. Thus, for $|y|$ sufficiently large depending on
$t$, the free line satisfies $|y+sv|\ge|y|/2$ for $0\le s\le t$.
Using \eqref{eq:density-envelope} and $f\le U$, we obtain
\[
 A_f(0,t;y,v)\le C_m\eps t|y|^{-(m-1)/m}.
\]
Keeping only the first term in \eqref{eq:mild-definition} gives
\[
 f^\#(t,y,v)\ge e^{-C_m\eps t|y|^{-(m-1)/m}}f_0(y,v)
 \ge\tfrac12 f_0(y,v)
\]
on the sufficiently distant part of this support. On the other hand,
\[
 \rho_{f_0}(y)=\eps|y|^{-(m-1)/m}
 \int_{\R^3}\phi(\widehat y,z)\,\dd z.
\]
By Fubini's theorem, the angular coefficient is positive on a measurable
subset of $\Sph^2$ of positive surface measure, because $\phi\ge0$ and
$\phi\not\equiv0$. The spatial integral of this density diverges at
infinity. Since $(y,v)\mapsto(y+tv,v)$ preserves volume, the assertion
follows.
\end{proof}
\begin{remark}[The role of the Maxwell assumption]
For a cutoff kernel with kinetic factor $|v-v_*|^\gamma$, the loss frequency
depends on $v$. Whenever all integrals are finite, the same collision
symmetries still give
\[
 \int_{\R^3}\Qp(F,G)\,\dd v=\int_{\R^3}F\nu_G\,\dd v
 =\int_{\R^3}G\nu_F\,\dd v.
\]
Consequently the coupled gap balance becomes
\[
 \partial_t\rho_w+\nabla_x\cdot j_w
 =\int_{\R^3}
  (\nu_{f^{\mathrm{up}}}+\nu_{f^{\mathrm{lo}}})w\,\dd v.
\]
The Maxwell assumption reduces this expression to the scalar product in
\eqref{eq:gap-density-balance-main}, which is controlled by the density
estimate already available. For other kinetic factors, a corresponding
argument would require additional control of the velocity-dependent
frequency, or a suitable weighted gap estimate. The necessary gain bounds
and scaling must also be reconsidered; no extension to $\gamma\ne0$ is
asserted here.
\end{remark}
\begin{remark}[Angular cutoff]
The critical gain estimate requires only a bounded nonnegative angular
kernel. The condition $\Lambda_{\mathbf{b}}<\infty$ in
\eqref{eq:strong-cutoff-intro} is used in
Lemma~\ref{lem:angular-integration} and the high-velocity and local weighted
collision bounds. It allows the support of $\mathbf{b}$ to reach both
endpoints; for example, $\mathbf{b}(z)=(1-z^2)^\alpha$ satisfies it when
$\alpha>1/2$. Thus no fixed angular neighborhood of either endpoint has
to be excluded. A general bounded kernel, such as $\mathbf{b}\equiv1$,
need not have finite $\Lambda_{\mathbf{b}}$; the theorem is not asserted
under boundedness alone. A non-cutoff extension would require further
estimates for the angular singularity.
\end{remark}

\section*{Acknowledgements}
Q.-H.~Nguyen was supported by the CAS Project for Young Scientists in
Basic Research (Grant No.~YSBR-031) and the NSFC (Grant Nos.~1251101538
and 12595282). T.~Yang was supported by the General Research Fund of
Hong Kong (Project No.~11302723), a start-up grant from The Hong Kong
Polytechnic University (Project No.~P0043962), and the Kuok Group
Foundation.
Jiaqi Yang is supported by the National Natural Science Foundation of China
(Grant No.~12471225) and the Natural Science Basic Research Program of
Shaanxi (Program No.~2026JC-YXQN-021).
\section*{AI-use disclosure}
The authors used OpenAI's ChatGPT (Sol 5.6 and Astra 6) for language
editing, organization, bibliographic checks, and checking calculations
in arguments they had already developed. The authors reviewed all
AI-assisted revisions, independently verified the mathematical arguments
and proofs, and take full responsibility for the content and conclusions
of the paper.

\bigskip
\begingroup
\small
\begin{samepage}
\noindent\textbf{Quoc-Hung Nguyen}\\
State Key Laboratory of Mathematical Sciences, Academy of Mathematics
and Systems Science, Chinese Academy of Sciences, Beijing 100190, China;\\
Institute of Mathematics, Academy of Mathematics and Systems Science,
the Chinese Academy of Sciences, Beijing 100190, China.\\
\textit{Email address:}
\href{mailto:qhnguyen@amss.ac.cn}{\nolinkurl{qhnguyen@amss.ac.cn}}
\par
\end{samepage}
\medskip
\begin{samepage}
\noindent\textbf{Jiaqi Yang}\\
School of Mathematics and Statistics, Northwestern Polytechnical University.\\
\textit{Email addresses:}
\href{mailto:yjqmath@nwpu.edu.cn}{\nolinkurl{yjqmath@nwpu.edu.cn}},
\href{mailto:yjqmath@163.com}{\nolinkurl{yjqmath@163.com}}
\par
\end{samepage}
\medskip
\begin{samepage}
\noindent\textbf{Tong Yang}\\
Department of Applied Mathematics, The Hong Kong Polytechnic University,
Kowloon, Hong Kong, P. R. China.\\
\textit{Email address:}
\href{mailto:t.yang@polyu.edu.hk}{\nolinkurl{t.yang@polyu.edu.hk}}
\par
\end{samepage}
\endgroup
\end{document}